\documentclass[a4paper,leqno]{amsart}

\usepackage{latexsym}
\usepackage[english]{babel}
\usepackage{fancyhdr}
\usepackage[mathscr]{eucal}
\usepackage{amsmath}
\usepackage{mathrsfs}
\usepackage{amsthm}
\usepackage{amsfonts}
\usepackage{amssymb}
\usepackage{amscd}
\usepackage{bbm}
\usepackage{graphicx}
\usepackage{graphics}
\usepackage{latexsym}
\usepackage{color}

\newcommand{\ud}{\mathrm{d}}

\newcommand{\ii}{\mathrm{i}}

\newcommand{\C}{\mathbb C}

\newcommand{\R}{\mathbb R}

\theoremstyle{plain}
\newtheorem{theorem}{Theorem}[section]
\newtheorem{lemma}[theorem]{Lemma}
\newtheorem{corollary}[theorem]{Corollary}
\newtheorem{proposition}[theorem]{Proposition}

\theoremstyle{definition}
\newtheorem{definition}[theorem]{Definition}

\newtheorem{remark}[theorem]{Remark}
\newtheorem*{remark*}{Remark}

\numberwithin{equation}{section}

\begin{document}

\title[Singular Hartree equation in fractional perturbed Sobolev spaces]
{The singular Hartree equation in fractional perturbed Sobolev spaces}
\author[A.~Michelangeli]{Alessandro Michelangeli}
\address[A.~Michelangeli]{International School for Advanced Studies -- SISSA \\ via Bonomea 265 \\ 34136 Trieste (Italy).}
\email{alemiche@sissa.it}
\author[A.~Olgiati]{Alessandro Olgiati}
\address[A.~Olgiati]{International School for Advanced Studies -- SISSA \\ via Bonomea 265 \\ 34136 Trieste (Italy).}
\email{aolgiati@sissa.it}
\author[R.~Scandone]{Raffaele Scandone}
\address[R.~Scandone]{International School for Advanced Studies -- SISSA \\ via Bonomea 265 \\ 34136 Trieste (Italy).}
\email{rscandone@sissa.it}

\begin{abstract}
We establish the local and global theory for the Cauchy problem of the singular Hartree equation in three dimensions, that is, the modification of the non-linear Schr\"{o}dinger equation with Hartree non-linearity, where the linear part is now given by the Hamiltonian of point interaction. The latter is a singular, self-adjoint perturbation of the free Laplacian, modelling a contact interaction at a fixed point.
The resulting non-linear equation is the typical effective equation for the dynamics of condensed Bose gases with fixed point-like impurities. We control the local solution theory in the perturbed Sobolev spaces of fractional order between the mass space and the operator domain. We then control the global solution theory both in the mass and in the energy space.
\end{abstract}

\date{\today}

\subjclass[2000]{}
\keywords{Point interactions. Singular perturbations of the Laplacian. Regular and singular Hartree equation. Fractional singular Sobolev spaces, Strichartz estimates for point interaction Hamiltonians, Fractional Leibniz rule. Kato-Ponce commutator estimates.}

%

\maketitle


\section{The singular Hartree equation. Main results.}

The Hartree equation in $d$ dimension is the well-known semi-linear Schr\"{o}dinger equation with cubic convolutive non-linearity of the form
\begin{equation}\label{eq:general_Hartree}
\ii\partial_t u\;=\;-\Delta u + Vu+ (w*|u|^2)u
\end{equation}
in the complex-valued unknown $u\equiv u(x,t)$, $t\in\mathbb{R}$, $x\in\mathbb{R}^d$, for given  measurable functions  $V,w:\mathbb{R}^d\to\mathbb{R}$.

Among the several contexts of relevance of \eqref{eq:general_Hartree}, one is surely the quantum dynamics of large Bose gases, where particles are subject to an external potential $V$ and interact through a two-body potential $w$. In this case \eqref{eq:general_Hartree} emerges as the effective evolution equation, rigorously in the limit of infinitely many particles, of a many-body initial state that is scarcely correlated, say, $\Psi(x_1,\dots,x_N)\sim u_0(x_1)\cdots u_0(x_N)$, whose evolution can be proved to retain the approximate form $\Psi(x_1,\dots,x_N;t)\sim u(x_1,t)\cdots u(x_N,t)$ for some one-body orbital $u\in L^2(\mathbb{R}^d)$ that solves the Hartree equation \eqref{eq:general_Hartree} with initial condition $u(x,0)=u_0(x)$. The precise meaning of the control of the many-body wave function is in the sense of one-body reduced density matrices. The limit $N\to +\infty$ is taken with a suitable re-scaling prescription of the many-body Hamiltonian, so as to make the limit non-trivial. In the \emph{mean field} scaling, that models particles paired by an interaction of long range and weak magnitude, the interaction term in the Hamiltonian has the form $N^{-1}\sum_{j<k}w(x_j-x_k)$, and when applied to a wave function of the approximate form $u(x_1)\cdots u(x_N)$ it generates indeed the typical self-interaction term $(w*|u|^2)u$ of \eqref{eq:general_Hartree}. This scenario is today controlled in a virtually complete class of cases, ranging from bounded to locally singular potentials $w$, and through a multitude of techniques to control the limit (see, e.g., \cite[Chapter 2]{Benedikter-Porta-Schlein-2015} and the references therein).

The Cauchy problem for \eqref{eq:general_Hartree} is extensively studied and understood too, including its local and global well-posedness and its scattering -- for the vast literature on the subject, we refer to the monograph \cite{cazenave}, as well as to the recent work \cite{Miao-Hartree-2007}. Two natural conserved quantities for \eqref{eq:general_Hartree} are the mass and (as long as $w(x)=w(-x)$), the energy, namely,
\[
\begin{split}
\mathcal{M}(u)\;&=\;\int_{\mathbb{R}^d}|u|^2\,\ud x \\
\mathcal{E}(u)\;&=\;\frac{1}{2}\!\int_{\mathbb{R}^d}\big(|\nabla u|^2 + V|u|^2\big)\,\ud x+\frac{1}{4}\!\iint_{\substack{ \\ \\ \\ \!\!\!\!\!\!\!\!\!\!\mathbb{R}^d\times\mathbb{R}^d}}\!\!\!w(x-y)|u(x)|^2|u(y)|^2\,\ud x\,\ud y\,.
\end{split}
\]
The natural energy space is therefore $H^1(\mathbb{R}^d)$, and the equation is energy sub-critical for $w\in L^{1}(\mathbb{R}^d)+L^{\infty}(\mathbb{R}^d)$ and mass sub-critical for $w\in L^{q}(\mathbb{R}^d)+L^{\infty}(\mathbb{R}^d)$, for some $q\geqslant\max\{1,\frac d2\}$ ($q>1$ if $d=2$).

In fact, irrespectively of the technique to derive the Hartree equation from the many-body linear Schr\"{o}dinger equation (hierarchy of marginals, Fock space of fluctuations, counting of the condensate particles, and others), one fundamental requirement is that at least for the time interval in which the limit $N\to +\infty$ is monitored the Hartree equation itself is well-posed, which makes the understanding of the effective Cauchy problem an essential pre-requisite for the derivation from the many-body quantum dynamics.

In the quantum interpretation discussed above, the external potential $V$ can be regarded as a \emph{confining potential} or also as a \emph{local inhomogeneity} of the spatial background where particles are localised in, depending on the model. In general, as long as $V$ is locally sufficiently regular, this term is harmless both in the Cauchy problem associated to \eqref{eq:general_Hartree} and in its rigorous derivation from the many-body Schr\"{o}dinger dynamics. This, in particular, allows one to model local inhomogeneities such as `bump'-like impurities, but genuine `delta'-like impurities localised at some fixed points $X_1,\dots X_M\in\mathbb{R}^3$ certainly escape this picture.

In this work we are indeed concerned with a so-called `delta-like singular' version of the ordinary Hartree equation \eqref{eq:general_Hartree} where formally the local impurity $V(x)=\mathcal{V}(x-X)$ around the point $X$, for some locally regular potential $\mathcal{V}$ is replaced by $V(x)=\delta(x-X)$, and more concretely we study the Cauchy problem for an equation of the form 
\begin{equation}\label{eq:sin_Hartree_formal}
\ii\partial_t u\;=\;\textrm{``}-\Delta u + \delta(x-X)u\,\textrm{''}+ (w*|u|^2)u\,.
\end{equation}
There will be no substantial loss of generality, in all the following discussion, if we take one point centre $X$ only, instead of $X_1,\dots X_M\in\mathbb{R}^3$, and if we set $X=0$, which we will do throughout.

The precise meaning in which the linear part in the r.h.s.~of \eqref{eq:sin_Hartree_formal} has to be understood is the `\emph{singular Hamiltonian of point interaction}', that is, a singular perturbation of the negative Laplacian $-\Delta$ which, consistently with the interpretation of a local impurity that is so singular as to be supported only at one point, is a self-adjoint extension on $L^2(\mathbb{R}^d)$ of the symmetric operator $-\Delta|_{C^\infty_0(\mathbb{R}^d)}$, and therefore acts precisely as $-\Delta$ on $H^2$-functions supported away from the origin. In fact,  $-\Delta|_{C^\infty_0(\mathbb{R}^d)}$ is already essentially self-adjoint when $d\geqslant 4$, with operator closure given by the self-adjoint $-\Delta$ with domain $H^2(\mathbb{R}^d)$, therefore it only makes sense to consider the singular Hartree equation for $d\in\{1,2,3\}$, and the higher the co-dimension of the point where the singular interaction is supported, the more difficult the problem.

Our setting in this work will be with $d=3$. We shall comment later on analogous results in the simpler case $d=1$. In three dimensions one has the following standard construction, which we recall, for example, from \cite[Chapter I.1]{albeverio-solvable} and \cite[Section 3]{MO-2016}.

The class of self-adjoint extensions in $L^2(\mathbb{R}^3)$ of the positive and densely defined symmetric operator $-\Delta|_{C^\infty_0(\mathbb{R}^3\setminus\{0\})}$ is a one-parameter family of operators $-\Delta_\alpha$, $\alpha\in(-\infty,+\infty]$, defined by
\begin{equation}\label{eq:op_dom-opaction}
\begin{split}
\mathcal{D}(-\Delta_\alpha)\;&=\;\Big\{\psi\in L^2(\mathbb{R}^3)\,\Big|\,\psi=\phi_\lambda+\frac{\phi_\lambda(0)}{\alpha+\frac{\sqrt{\lambda}}{4\pi}}\,G_\lambda\textrm{ with }\phi_\lambda\in H^2(\mathbb{R}^3)\Big\} \\
(-\Delta_\alpha+\lambda)\,\psi\;&=\;(-\Delta+\lambda)\,\phi_\lambda\,,
\end{split}
\end{equation}
where $\lambda>0$ is an arbitrarily fixed constant and
\begin{equation}\label{eq:defGlambda}
G_\lambda(x)\;:=\;\frac{e^{-\sqrt{\lambda}\,|x|}}{4\pi |x|}
\end{equation}
is the Green function for the Laplacian, that is, the distributional solution to $(-\Delta+\lambda)G_\lambda=\delta$ in $\mathcal{D}'(\mathbb{R}^3)$.

The quadratic form of $-\Delta_\alpha$ is given by
\begin{equation}\label{eq:form_dom-opaction}
\begin{split}
\mathcal{D}[-\Delta_\alpha]\;&=\;H^1(\mathbb{R}^3)\dotplus\mathrm{span}\{ G_\lambda\} \\
(-\Delta_\alpha)[\phi_\lambda+\kappa_\lambda\,G_\lambda] \;&=\; -\lambda\|\phi_\lambda+\kappa_\lambda\,G_\lambda\|_2^2 \\
&\qquad+\|\nabla \phi_\lambda\|_2^2+\lambda\|\phi_\lambda\|_2^2+{\textstyle \big(\alpha+\frac{\sqrt{\lambda}\,}{4\pi}\big)}\,|\kappa_\lambda|^2\,.
\end{split}
\end{equation}

The above decompositions of a generic $\psi\in\mathcal{D}(-\Delta_\alpha)$ or $\psi\in\mathcal{D}[-\Delta_\alpha]$ are unique and are valid for every chosen $\lambda$.  The extension $-\Delta_{\alpha=\infty}$ is the Friedrichs extension and is precisely the self-adjoint $-\Delta$ on $L^2(\mathbb{R}^3)$ with domain $H^2(\mathbb{R}^3)$.

The operator $-\Delta_\alpha$ is reduced with respect to the canonical decomposition
\[
L^2(\mathbb{R}^3)\;\cong\; L^2_{\ell=0}(\mathbb{R}^3)\oplus\;\bigoplus_{\ell=1}^\infty  L^2_\ell(\mathbb{R}^3)
\]
in terms of subspaces $L^2_\ell(\mathbb{R}^3)$ of definite angular symmetry, and it is a non-trivial modification of the negative Laplacian in the spherically symmetric sector only, i.e.,
\begin{equation}\label{eq:DaS}
(-\Delta_{\alpha})|_{\mathcal{D}(-\Delta_{\alpha})\cap L^2_\ell(\mathbb{R}^3)}\;=\;(-\Delta)|_{H^2_\ell(\mathbb{R}^3)}\,,\qquad \ell\neq 0\,.
\end{equation}

Each
$\psi\in\mathcal{D}(-\Delta_\alpha)$ satisfies the short range asymptotics
\begin{equation}\label{eq:BPcontact}
\psi(x)\;=\;c_\psi\Big(\frac{1}{|x|}-\frac{1}{a}\Big)+o(1)\qquad\mathrm{as}\;\;x\to 0\,,\qquad a:=(-4\pi\alpha)^{-1}\,,
\end{equation}
or also, in momentum space,
\begin{equation}\label{eq:TMS_cond_asymptotics_1}
\int_{\substack{\,p\in\mathbb{R}^3 \\ \! |p|<R}}\,{\widehat \psi(p) \,\ud p}\;=\;d_\psi(R+2\pi^2\alpha)+o(1) \qquad\textrm{as}\qquad R\to +\infty\,,
\end{equation}
for some $c_\psi,d_\psi\in\mathbb{C}$. Equations \eqref{eq:BPcontact} and \eqref{eq:TMS_cond_asymptotics_1} are referred to as, respectively, the \emph{Bethe-Peierls contact condition} \cite{Bethe_Peierls-1935} and the \emph{Ter-Martyrosyan--Skornyakov condition} \cite{TMS-1956}, and express a boundary condition for the wave function in the vicinity of the origin, which is indeed the characteristic behaviour of the low-energy bound state for a Schr\"{o}dinger operator $-\Delta+V$ where $V$ has almost zero support and $s$-wave scattering length $a=-(4\pi\alpha)^{-1}$. Thus, $-\Delta_\alpha$ is recognised to be the \emph{Hamiltonian of point interaction in the $s$-wave channel, localised at $x=0$, and with inverse scattering length $\alpha$} in suitable units.

The spectrum of $-\Delta_\alpha$ is given by
\begin{equation}\label{eq:spectrumDalpha}
\begin{split}
\sigma_{\mathrm{ess}}(-\Delta_\alpha)\;&=\;\sigma_{\mathrm{ac}}(-\Delta_\alpha)\;=\;[0,+\infty)\,,\qquad \sigma_{\mathrm{sc}}(-\Delta_\alpha)\;=\;\emptyset\,, \\
\sigma_{\mathrm{p}}(-\Delta_\alpha)\;&=\;
\begin{cases}
\qquad \emptyset & \textrm{if }\alpha\in[0,+\infty] \\
\{-(4\pi\alpha)^2\} & \textrm{if }\alpha\in(-\infty,0)\,.
\end{cases}
\end{split}
\end{equation}
The negative eigenvalue $-(4\pi\alpha)^2$, when it exists, is simple and the corresponding eigenfunction is $|x|^{-1}e^{-4\pi|\alpha|\,|x|}$. Thus, $\alpha\geqslant 0$ corresponds to a non-confining, `repulsive' contact interaction.

We can now make \eqref{eq:sin_Hartree_formal} unambiguous and therefore consider the singular Hartree equation 
\begin{equation}\label{eq:sing_Hartree}
\ii\partial_t u\;=\;-\Delta_\alpha u+ (w*|u|^2)u\,.
\end{equation}
In order to avoid non-essential additional discussions, we restrict ourselves once and for all to positive $\alpha$'s. In fact, $-\Delta_\alpha$ is semi-bounded from below for every $\alpha\in(-\infty,+\infty]$, as seen in \eqref{eq:spectrumDalpha} above, thus shifting it up by a suitable constant one ends up with studying a modification of \eqref{eq:sing_Hartree} with a trivial linear term that does not affect the solution theory of the equation.

Owing to the self-adjointness of $-\Delta_\alpha$, and to its positivity for $\alpha\geqslant 0$, the `\emph{singular (or perturbed) Schr\"{o}dinger propagator}' $t\mapsto e^{\ii t\Delta_\alpha}$ leaves the domain of each power of $-\Delta_\alpha$ invariant. In complete analogy to the non-perturbed case, where the free Schr\"{o}dinger propagator $t\mapsto e^{\ii t\Delta}$ leaves the Sobolev space $H^s(\mathbb{R}^3)=\mathcal{D}((-\Delta)^{s/2})$ invariant, and the solution theory for the ordinary Hartree equation is made in $H^s(\mathbb{R}^3)$, including the energy space $H^1(\mathbb{R}^3)$, now the meaningful spaces of solutions where to settle the Cauchy problem for \eqref{eq:sing_Hartree} are of the type $\widetilde{H}^s_\alpha(\mathbb{R}^3)$, the `\emph{singular Sobolev space}' of order $s$, namely the Hilbert space 
\begin{equation}\label{eq:Hsperturbed}
\widetilde{H}^s_\alpha(\mathbb{R}^3)\;:=\;\mathcal{D}((-\Delta_\alpha)^{s/2})
\end{equation}
equipped with the `fractional singular Sobolev norm'
\begin{equation}\label{eq:Hsperturbed-norm}
\|\psi\|_{\widetilde{H}^s_\alpha}\;:=\;\|(\mathbbm{1}-\Delta_\alpha)^{s/2}\psi\|_2\,. 
\end{equation}

It is worth remarking that whereas the kernel of the propagator $t\mapsto e^{\ii t\Delta_\alpha}$ is known since long \cite{Scarlatti-Teta-1990,Albeverio_Brzesniak-Dabrowski-1995}, the characterisation of the singular fractional Sobolev space $\widetilde{H}^s_\alpha(\mathbb{R}^3)$ is only a recent achievement \cite{Georgiev-M-Scandone-2016-2017}, and we shall review it in Section \ref{sec:preparatory_material}.

In view of the preceding discussion, we consider the Cauchy problem
\begin{equation}\label{eq:Cauchy_problem_sing_Hartree}
\begin{cases}
\;\ii\partial_t u\;=\;-\Delta_\alpha u+ (w*|u|^2)u \\
\; u(0)\;=\;f\;\in\;\widetilde{H}^s_\alpha(\mathbb{R}^3)\,.
\end{cases}
\end{equation}
We are going to discuss its \emph{local} solution theory both in a regime of low (i.e., $s\in[0,\frac{1}{2}$), intermediate (i.e., $s\in(\frac{1}{2},\frac{3}{2})$), and high (i.e., $s\in(\frac{3}{2},2]$) regularity. Then, exploiting the conservation of the mass and the energy, we are going to obtain a \emph{global} theory in the mass space ($s=0$) and the energy space ($s=1$).

We deal with strong $\widetilde{H}^s_\alpha$-solutions of the problem \eqref{eq:Cauchy_problem_sing_Hartree}, meaning, functions $u\in\mathcal{C}(I,\widetilde{H}_\alpha^s(\mathbb{R}^3))$ for some interval $I\subseteq\mathbb{R}$ with $I\ni 0$, which are fixed points for the \emph{solution map}
\begin{equation}\label{eq:Smap}
\Phi(u)(t)\;:=\;e^{\ii t\Delta_\alpha} f-\ii\int_0^t e^{\ii (t-\tau)\Delta_\alpha}(w*|u(\tau)|^2)u(\tau)\,\ud\tau\,.
\end{equation}

Let us recall the notion of \emph{local} and \emph{global well-posedness} (see \cite[Section 3.1]{cazenave}).

\begin{definition}
	We say that the Cauchy problem \eqref{eq:Cauchy_problem_sing_Hartree} is locally well-posed in $\widetilde{H}_\alpha^s(\mathbb{R}^3)$ if the following properties hold:
	\begin{itemize}
		\item[(i)] For every $f\in\widetilde{H}_\alpha^s(\mathbb{R}^3)$, there exists a unique strong $\widetilde{H}_\alpha^s$-solution $u$ to the equation
		\begin{equation}\label{eq:integral_formula_Duhamel}
		u(t)\;=\;e^{\ii t\Delta_\alpha} f-\ii\int_0^t e^{\ii (t-\tau)\Delta_\alpha}(w*|u(\tau)|^2)u(\tau)\,\ud\tau
		\end{equation}
		defined on the maximal interval $(-T_*,T^*)$, where $T_*,T^*\in(0,+\infty]$ depend on $f$ only.
		\item[(ii)] There is the blow-up alternative: if $T^*<+\infty$ (resp., if $T_*<+\infty$), then $\lim_{t\uparrow T^*}\|u(t)\|_{\widetilde{H}_\alpha^s}=+\infty$ (resp., $\lim_{t\downarrow T_*}\|u(t)\|_{\widetilde{H}_\alpha^s}=+\infty$).
		\item[(iii)] There is continuous dependence on the initial data: if $f_n\xrightarrow[]{n\to+\infty} f$ in $\widetilde{H}_\alpha^s(\R^3)$, and if $I\subset(-T_*,T^*)$ is a closed interval, then the maximal solution $u_n$ to \eqref{eq:Cauchy_problem_sing_Hartree} with initial datum $f_n$ is defined on $I$ for $n$ large enough, and satisfies $u_n\xrightarrow[]{n\to+\infty}u$ in $\mathcal{C}(I,\widetilde{H}^s_\alpha(\R^3))$.
	\end{itemize}
	If $T_*=T^*=+\infty$, we say that the solution is global. If \eqref{eq:Cauchy_problem_sing_Hartree} is locally well-posed and for every $f\in\widetilde{H}_\alpha^s(\mathbb{R}^3)$ the solution is global, we say that \eqref{eq:Cauchy_problem_sing_Hartree} is globally well-posed in $\widetilde{H}_\alpha^s(\mathbb{R}^3)$.
\end{definition}

Let us emphasize the following feature of solutions to \eqref{eq:integral_formula_Duhamel}: if both $f$ and $w$ are spherically symmetric, so too is $u$. This follows at once from the symmetry  of the non-linear term of  \eqref{eq:integral_formula_Duhamel} together with the previously mentioned fundamental property that the subspaces of $L^2(\mathbb{R}^3)$ of definite rotational symmetry are invariant under the propagator $ e^{\ii t\Delta_\alpha}$. This makes the above definitions of strong solutions and well-posedness meaningful also with respect to the spaces
$$\widetilde{H}_{\alpha,\mathrm{rad}}^s(\R^3)\;:=\;\widetilde{H}_{\alpha}^s(\R^3)\cap L^2_{\ell=0}(\R^3)$$
equipped with the $\widetilde{H}_{\alpha}^s$-norm. Part of the solution theory we found is set in such spaces.

We can finally formulate our main results. Let us start with the local theory.

\begin{theorem}[$L^2$-theory -- local well-posedness]\label{thm:L2_reg}
	Let $\alpha\geqslant 0$. Let $w\in L^{\frac{3}{\gamma},\infty}(\mathbb{R}^3)$ for $\gamma\in[0,\frac32)$. Then the Cauchy problem \eqref{eq:Cauchy_problem_sing_Hartree} is locally well-posed in $L^2(\R^3)$.
\end{theorem}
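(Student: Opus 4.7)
The plan is to obtain local well-posedness via a Banach fixed-point argument for the Duhamel map $\Phi$ of \eqref{eq:Smap}, carried out in a Strichartz-type space of the perturbed propagator $e^{\ii t\Delta_\alpha}$; the blow-up alternative and the continuous dependence on the initial datum will then follow from the standard machinery (cf.~\cite[Ch.~4]{cazenave}). The three analytic inputs I plan to use are the Strichartz estimates for $e^{\ii t\Delta_\alpha}$ (on the admissibility range recalled in Section \ref{sec:preparatory_material}), the weak-type Young inequality in Lorentz spaces to control the convolution $w*|u|^2$, and H\"older's inequality to close the cubic bound. The sub-critical condition $\gamma<\tfrac{3}{2}$ will be exactly what produces a positive power of $T$ in the nonlinear estimate.

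First I would pick the $L^2$-admissible pair $(q,r)=\bigl(\tfrac{8}{\gamma},\tfrac{12}{6-\gamma}\bigr)$, which satisfies $\tfrac{2}{q}+\tfrac{3}{r}=\tfrac{3}{2}$ and lies in the range where the Strichartz estimates for $e^{\ii t\Delta_\alpha}$ are available, precisely for $\gamma<\tfrac{3}{2}$. Setting
\[
Y_T\;:=\;\mathcal{C}([-T,T],L^2(\mathbb{R}^3))\cap L^q([-T,T],L^r(\mathbb{R}^3))
\]
with its natural norm, I would apply weak-type Young to obtain $\|w*|u|^2\|_{L^{r/(r-2)}_x}\lesssim\|w\|_{L^{3/\gamma,\infty}}\|u\|_{L^r_x}^2$, then H\"older in $x$ to get $\|(w*|u|^2)u\|_{L^{r'}_x}\lesssim\|u\|_{L^r_x}^3$, and finally H\"older in $t$ on $[-T,T]$ (which is licit because $3q'<q$ for $\gamma<2$) to conclude
\[
\|(w*|u|^2)u\|_{L^{q'}_{T}L^{r'}_{x}}\;\lesssim\;\|w\|_{L^{3/\gamma,\infty}}\,T^{(2-\gamma)/2}\,\|u\|_{L^{q}_{T}L^{r}_{x}}^{3},
\]
with a matching Lipschitz estimate for the trilinear difference $(w*|u|^2)u-(w*|v|^2)v$ by multilinearity.

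Next I would insert the cubic bound into the homogeneous and inhomogeneous Strichartz estimates applied to $\Phi(u)$, yielding $\|\Phi(u)\|_{Y_T}\lesssim\|f\|_2+T^{(2-\gamma)/2}\|u\|_{Y_T}^3$ and the analogous Lipschitz bound for $\Phi(u)-\Phi(v)$. For $T$ small enough in terms of $\|f\|_2$, the map $\Phi$ will send a ball of $Y_T$ of radius $\sim\|f\|_2$ into itself and act as a strict contraction there, its unique fixed point providing the sought local strong $L^2$-solution. A routine concatenation then gives the maximal interval $(-T_*,T^*)$ and the blow-up alternative, while continuous dependence on the datum follows from the same Strichartz and cubic bounds applied to the difference of two maximal solutions on any closed sub-interval.

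The delicate point that I expect to be the main obstacle is matching the available Strichartz admissibility for $-\Delta_\alpha$ with the nonlinear exponents forced by the Lorentz-space convolution bound. Because the perturbed propagator is known to enjoy Strichartz estimates on a strictly smaller set of pairs than the free Laplacian, the contraction only closes in the range $\gamma<\tfrac{3}{2}$, rather than in the full $L^2$-sub-critical range $\gamma<2$ suggested by scaling alone; identifying an admissible pair $(q,r)$ that simultaneously lies in the perturbed Strichartz window and yields the correct cubic bound is therefore the crux of the argument. Once this is in place, the rest is a textbook sub-critical fixed-point argument.
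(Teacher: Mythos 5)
Your argument is correct and is essentially the paper's: a contraction for the Duhamel map in $\mathcal{C}([-T,T],L^2)\cap L^q_tL^r_x$ based on the perturbed Strichartz estimates \eqref{stri-1}--\eqref{stri-2}, weak-type Young in Lorentz spaces for $w*|u|^2$, and H\"older in space and time to extract the subcritical factor $T^{1-\gamma/2}$. The only real difference is bookkeeping: you take the admissible pair $(q,r)=(\frac{8}{\gamma},\frac{12}{6-\gamma})$ and place all three copies of $u$ in $L^r_x$, whereas the paper works with $(\frac{6}{\gamma},\frac{18}{9-2\gamma})$ as in \eqref{eq:fixed_point_space_Y} and puts one factor in $L^\infty_tL^2_x$; both yield the same power of $T$. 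Two small remarks: your closing comment slightly misidentifies the role of $\gamma<\frac32$ --- your pair satisfies the admissibility condition \eqref{eq:admie} (i.e.\ $r<3$, $q>4$) for all $\gamma<2$, so with your exponent scheme the contraction would actually close on the full mass-subcritical range, while it is the paper's choice of exponents (one factor in $L^2_x$) that forces $\gamma<\frac32$; and at the endpoint $\gamma=0$ the Lorentz--Young step degenerates to the ordinary Young/H\"older inequality $\|w*|u|^2\|_{L^\infty}\leqslant\|w\|_{L^\infty}\|u\|_{L^2}^2$ (the paper treats $\gamma=0$ separately by the non-Strichartz argument of the low-regularity theorem), which your pair $(\infty,2)$ accommodates without difficulty.
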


\begin{theorem}[Low regularity -- local well-posedness]\label{thm:low_reg}
	Let $\alpha\geqslant 0$ and $s\in(0,\frac{1}{2})$. Let $w\in L^{\frac{3}{\gamma},\infty}(\mathbb{R}^3)$ for $\gamma\in[0,2s]$. Then the Cauchy problem \eqref{eq:Cauchy_problem_sing_Hartree} is locally well-posed in $\widetilde{H}_{\alpha}^s(\R^3)$, which in this regime coincides with $H^s(\R^3)$.
\end{theorem}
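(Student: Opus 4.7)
The plan is to run a standard Banach contraction on the Duhamel solution map $\Phi$ in \eqref{eq:Smap}, exploiting the two structural facts made available by the preparatory material. First, as recalled in Section \ref{sec:preparatory_material} from \cite{Georgiev-M-Scandone-2016-2017}, for $s\in(0,\tfrac12)$ one has the identification $\widetilde{H}^s_\alpha(\mathbb{R}^3)=H^s(\mathbb{R}^3)$ with equivalence of norms, as already stated in the theorem; this immediately makes the entire machinery of ordinary fractional Sobolev calculus (in particular the Kato-Ponce fractional Leibniz rule alluded to in the keywords) directly applicable to the non-linear term, and also removes any issue about what $(-\Delta_\alpha)^{s/2}$ really does to the contact-type singularity since the singular charge is absent below the $H^{1/2}$ threshold. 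Second, Strichartz estimates for the perturbed propagator $e^{\ii t\Delta_\alpha}$, proved in the preparatory section, are available at the regularity level $s$, since the $\widetilde{H}^s_\alpha$-norm is literally an $H^s$-norm here.

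Concretely, I would fix a Schr\"odinger-admissible pair $(q,r)$ granted by the singular Strichartz theorem and set up the resolution space
\[
X_{T,M}\;:=\;\Big\{u\in\mathcal{C}([-T,T],H^s)\cap L^q([-T,T],W^{s,r})\,:\,\|u\|_{X_{T,M}}\leqslant M\Big\}
\]
equipped with the metric inherited from the derivative-free Strichartz norm, so as to avoid the classical loss in the contraction estimate. The Strichartz inequalities bound $\Phi(u)$ in $X_{T,M}$ by $\|f\|_{H^s}$ plus the $L^{q'}_t W^{s,r'}_x$-norm of the Hartree non-linearity. The latter I would control in two coupled steps: Hardy-Littlewood-Sobolev in Lorentz spaces, which uses the hypothesis $w\in L^{\frac{3}{\gamma},\infty}(\mathbb{R}^3)$ to produce $\|w*|u|^2\|_{L^{p}}\lesssim \|w\|_{L^{\frac{3}{\gamma},\infty}}\|u\|_{L^{a}}^2$ for the appropriate Young/H\"older scaling; and the Kato-Ponce fractional Leibniz rule, which distributes $(-\Delta)^{s/2}$ across the product $(w*|u|^2)\cdot u$, after which the same HLS bound applied to $(-\Delta)^{s/2}|u|^2$ closes the estimate of $(-\Delta)^{s/2}[(w*|u|^2)u]$.

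A positive power $T^\theta$ is then extracted from the $L^{q'}_t$ norm by H\"older in time, and this is precisely where the sub-criticality margin encoded in $\gamma\leqslant 2s$ is used: it provides the slack needed to simultaneously pick the H\"older, Lorentz and Strichartz exponents in their admissible windows while leaving $\theta>0$. The resulting estimates
\[
\|\Phi(u)\|_{X_{T,M}}\leqslant C\|f\|_{H^s}+C\,T^\theta M^3\,,\qquad \|\Phi(u)-\Phi(v)\|\leqslant C\,T^\theta M^2\,\|u-v\|
\]
yield a unique fixed point on a small enough $[-T,T]$, and the blow-up alternative together with continuous dependence on the initial datum follow by the standard iteration-on-Strichartz-intervals argument, as in \cite[Chapter 4]{cazenave}.

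The main technical obstacle is the exponent book-keeping: one has to exhibit a single Schr\"odinger-admissible pair $(q,r)$ and a H\"older/Lorentz splitting that (i) match the weak-type integrability index $3/\gamma$ of $w$ through HLS in Lorentz spaces, (ii) stay within the range for which Strichartz estimates for $e^{\ii t\Delta_\alpha}$ have been proved in the preparatory section, and (iii) leave a strictly positive power of $T$, uniformly over $\gamma\in[0,2s]$ and $s\in(0,\tfrac12)$. Everything else, including the fact that $(-\Delta_\alpha)^{s/2}$ may be replaced by $(-\Delta)^{s/2}$ in the non-linear estimate, is a direct transcription of the regular Hartree theory once the identification $\widetilde{H}^s_\alpha=H^s$ has been invoked.
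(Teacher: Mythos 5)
There is a genuine gap in your plan, and it sits exactly at its load-bearing point: the resolution space $L^q_t W^{s,r}_x$ with $r>2$. The Strichartz estimates actually available for the singular propagator (Theorem \ref{thm:Strichartz}) are pure $L^q_t L^r_x$ bounds, with the restricted range $r\in[2,3)$, and carry no derivatives. To upgrade them to the level $s$ you would have to apply $(\mathbbm{1}-\Delta_\alpha)^{s/2}$, which does commute with $e^{\ii t\Delta_\alpha}$, and then convert $\|(\mathbbm{1}-\Delta_\alpha)^{s/2}v\|_{L^r}$ into $\|v\|_{W^{s,r}}$; but the identification $\widetilde{H}^s_\alpha=H^s$ of Theorem \ref{thm:domain_s}(i) and the norm equivalence \eqref{eq:equiv_of_norms_s012} are $L^2$-based statements, and no $L^r$-analogue for $r\neq 2$ is proved in the paper or in the results it quotes (one would need something like $W^{s,r}$-boundedness of the wave operators of \cite{DMSY-2017} together with their intertwining property, which is not established). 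Conversely, the free Bessel potential $(\mathbbm{1}-\Delta)^{s/2}$ does not commute with $e^{\ii t\Delta_\alpha}$, so you cannot pull free derivatives through the Duhamel formula \eqref{eq:Smap}. This is not exponent book-keeping: it is precisely the obstruction the authors acknowledge when they remark that Strichartz estimates are exploited only in the $L^2$-theory (whence the larger range $\gamma<\frac{3}{2}$ there), while for $s>0$ they work in a locally Lipschitz regime without Strichartz. A secondary inaccuracy: the hypothesis $\gamma\leqslant 2s$ is not a sub-criticality margin used to extract a power $T^\theta$; it is exactly the condition for the Sobolev embedding $H^s\hookrightarrow H^{\gamma/2}\hookrightarrow L^{\frac{6}{3-\gamma}}$ that closes the nonlinear estimate.

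The paper's actual proof avoids Strichartz altogether. It contracts on the ball $\mathcal{X}_{T,M}$ of $L^\infty([-T,T],\widetilde{H}^s_\alpha)$ defined in \eqref{eq:fixed_point_space}, with the weaker metric $d(u,v)=\|u-v\|_{L^\infty_t L^2_x}$, estimates the Hartree term pointwise in time by the Kato--Ponce rule \eqref{eq:KatoPonce} combined with H\"older and Young inequalities in Lorentz spaces (this is where $w\in L^{\frac{3}{\gamma},\infty}$ enters), and uses the embedding $H^s\hookrightarrow L^{\frac{6}{3-\gamma}}$; the smallness factor is simply the factor $T$ coming from the time integral, and the blow-up alternative and continuous dependence then follow from the standard continuation argument. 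If you wish to keep your architecture, you must either first prove an $L^r$-based comparison between $(\mathbbm{1}-\Delta_\alpha)^{s/2}$ and $(\mathbbm{1}-\Delta)^{s/2}$ for $r$ in the admissible window, or drop the derivative Strichartz norms and revert to the energy-type contraction just described, which is what the paper does and which suffices for the stated range $\gamma\in[0,2s]$.
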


\begin{theorem}[Intermediate regularity -- local well-posedness]\label{thm:high_reg}
	Let $\alpha\geqslant 0$ and $s\in(\frac12,\frac{3}{2})$. Let $w\in W^{s,p}(\mathbb{R}^3)$ for $p\in(2,+\infty)$. Then the Cauchy problem \eqref{eq:Cauchy_problem_sing_Hartree} is locally well-posed in $\widetilde{H}_{\alpha}^s(\R^3)$.
\end{theorem}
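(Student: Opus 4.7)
The plan is a Banach fixed-point argument for the solution map $\Phi$ of \eqref{eq:Smap} carried out in $X_T:=\mathcal{C}([-T,T],\widetilde{H}_\alpha^s(\R^3))$ for $T>0$ small. Two ingredients from the preparatory material are decisive at the outset: the propagator $e^{\ii t\Delta_\alpha}$, being a Borel function of $-\Delta_\alpha$, is unitary on every $\widetilde{H}_\alpha^s$, so $\|e^{\ii t\Delta_\alpha}f\|_{\widetilde{H}_\alpha^s}=\|f\|_{\widetilde{H}_\alpha^s}$; and in the intermediate regime $s\in(\tfrac12,\tfrac32)$ the characterisation from \cite{Georgiev-M-Scandone-2016-2017} gives the direct-sum decomposition $\widetilde{H}_\alpha^s(\R^3)=H^s(\R^3)\dotplus\mathrm{span}\{G_\lambda\}$, with the $\widetilde{H}_\alpha^s$-norm equivalent to $(\|\phi\|_{H^s}^2+|c|^2)^{1/2}$ for the unique splitting $u=\phi+c\,G_\lambda$.

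The whole argument reduces to a trilinear estimate
\begin{equation*}
\|(w*|u|^2)u\|_{\widetilde{H}_\alpha^s}\;\leqslant\;C\,\|w\|_{W^{s,p}}\,\|u\|_{\widetilde{H}_\alpha^s}^{\,3}
\end{equation*}
and its local Lipschitz counterpart for $(w*|u|^2)u-(w*|v|^2)v$. To prove it, write $u=\phi+c\,G_\lambda$ and expand $|u|^2=|\phi|^2+2\,\re(c\,\overline{\phi}\,G_\lambda)+|c|^2G_\lambda^{\,2}$. Since $G_\lambda\in L^q(\R^3)$ for every $q\in[1,3)$ while Sobolev embedding places $\phi\in H^s$ in the appropriate $L^r$ spaces, all three summands of $|u|^2$ are controlled in $L^{p'}$. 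The hypothesis $w\in W^{s,p}(\R^3)$ with $p>2$, combined with Young's convolution inequality applied to both $w*|u|^2$ and $\bigl((-\Delta)^{s/2}w\bigr)*|u|^2$, then places $w*|u|^2$ in a multiplier space for $H^s$ of $W^{s,\infty}$-type, in particular with a genuine pointwise value at the origin.

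It remains to read the product $(w*|u|^2)u$ inside the decomposition $H^s\dotplus\mathrm{span}\{G_\lambda\}$. For the regular piece $(w*|u|^2)\phi$, a Kato-Ponce fractional Leibniz inequality distributes the $s$ derivatives between the two factors, yielding an $H^s$-bound. For the singular piece one uses the identity
\begin{equation*}
(w*|u|^2)\,G_\lambda\;=\;(w*|u|^2)(0)\,G_\lambda+\bigl[(w*|u|^2)-(w*|u|^2)(0)\bigr]G_\lambda\,,
\end{equation*}
which isolates the coefficient of $G_\lambda$ in the new decomposition and leaves a remainder that, vanishing at the origin with H\"{o}lder-type modulus of order $s$, compensates the $|x|^{-1}$-singularity of $G_\lambda$ to yield an $H^s$-function whose norm is controlled by $\|w*|u|^2\|_{W^{s,\infty}}$. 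The constraint $s\in(\tfrac12,\tfrac32)$ enters exactly here: $s>\tfrac12$ so that $w*|u|^2$ admits a continuous representative with a genuine value at $0$, and $s<\tfrac32$ so that the corrected product $(f-f(0))G_\lambda$ still lies in $H^s$.

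Once the trilinear estimate is in hand, the fixed-point argument is routine: the unitarity of $e^{\ii t\Delta_\alpha}$ on $\widetilde{H}_\alpha^s$ combined with the Duhamel formula yields $\|\Phi(u)-\Phi(v)\|_{X_T}\leqslant C\,T\,Q(\|u\|_{X_T},\|v\|_{X_T})\,\|u-v\|_{X_T}$ for a polynomial $Q$, so $\Phi$ contracts a closed ball of radius $\sim\|f\|_{\widetilde{H}_\alpha^s}$ in $X_T$ once $T$ is sufficiently small; the blow-up alternative and continuous dependence on the initial datum follow from the same Lipschitz estimate by a standard continuation and comparison argument. The hard part will be the singular multiplication step, namely proving $\bigl[(w*|u|^2)-(w*|u|^2)(0)\bigr]G_\lambda\in H^s$ with a quantitative bound, as this is precisely the place where the Kato-Ponce commutator machinery announced in the keywords must interact with the fine characterisation of $\widetilde{H}_\alpha^s$, and why the two-sided constraint $\tfrac12<s<\tfrac32$ is essential rather than cosmetic.
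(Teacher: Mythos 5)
Your outline follows essentially the same route as the paper: decompose $u=\phi_\lambda+\kappa_\lambda G_\lambda$ via Theorem \ref{thm:domain_s}(ii), show $h:=w*|u|^2$ lands in a Sobolev space of continuous functions by Young's inequality, treat the regular product $h\phi_\lambda$ with the Kato--Ponce rule, isolate $h(0)G_\lambda$ as the new singular part, and reduce everything to showing $(h-h(0))G_\lambda\in H^s$ before running a contraction (the paper contracts in the weaker $L^\infty_tL^2_x$ metric on an $\widetilde{H}^s_\alpha$-ball rather than in $\mathcal{C}_t\widetilde{H}^s_\alpha$, but that is immaterial). The problem is that you stop exactly where the theorem's real content lies: the estimate $\|\mathcal{D}^s\bigl((h-h(0))G_\lambda\bigr)\|_{L^2}\lesssim\|h\|_{W^{s,q}}$, which you explicitly defer as ``the hard part,'' is Proposition \ref{tst} of the paper, and nothing in your sketch supplies it. The heuristic ``a H\"older-type modulus of order $s$ compensates the $|x|^{-1}$ singularity'' is not enough and is not even accurate for $s\in(1,\tfrac32)$, where $h$ is only $\mathcal{C}^{0,\vartheta}$ with $\vartheta=\min\{s-\tfrac3q,1\}<s$; one genuinely needs the refined commutator bound \eqref{eq:KatoPonce_Vladimir} of Fujiwara--Georgiev--Ozawa (to avoid putting all $s$ derivatives on either factor), the pointwise bound $|\mathcal{D}^s G_\lambda|\lesssim e^{-\lambda|x|}(|x|^{-1}+|x|^{-1-s})$ of Lemma \ref{lederi}, and the exponent bookkeeping in the proof of Proposition \ref{tst} (different choices of $s_1,s_2,q_1,q_2$ for $s\leqslant 1$ and $s>1$, and the inequality $\vartheta>s-\tfrac12$ for the term $\mathcal{R}_2$). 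Without this, the trilinear bound \eqref{eq:LaL}, and hence the whole fixed-point scheme, is unsubstantiated.

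There is also a concrete quantitative slip earlier in your argument: you claim all three summands of $|u|^2$ lie in $L^{p'}$ so that Young gives an $L^\infty$-type bound for $w*|u|^2$ and $(\mathcal{D}^sw)*|u|^2$. Since $G_\lambda^2\sim|x|^{-2}$ near the origin, $G_\lambda^2\in L^{p'}$ forces $p'<\tfrac32$, i.e.\ $p>3$, so your step fails for the admissible range $p\in(2,3]$. The paper avoids this by never aiming at $L^\infty$ directly: it estimates $w*|u|^2$ in $W^{s,3p}$ using $\widetilde{H}^s_\alpha(\mathbb{R}^3)\hookrightarrow L^{\frac{6p}{3p-2}}(\mathbb{R}^3)$ (note $\tfrac{6p}{3p-2}\in[2,3)$ for every $p>2$, so $G_\lambda$ is admissible) and only then uses $W^{s,3p}\hookrightarrow L^\infty$, as in Lemma \ref{convu}; you should restructure your Young-inequality step accordingly, and similarly take $h\in W^{s,q}$ with finite $q>6$ (not a ``$W^{s,\infty}$-type'' space) as the input for the multiplication Lemma \ref{produ}.
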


\begin{theorem}[High regularity -- local well-posedness]\label{highh}
	Let $\alpha\geqslant 0$ and $s\in(\frac{3}{2},2]$. Let $w\in W^{s,p}(\mathbb{R}^3)$ for $p\in(2,+\infty)$ and spherically symmetric. Then the Cauchy problem \eqref{eq:Cauchy_problem_sing_Hartree} is locally well-posed in $\widetilde{H}_{\alpha,\mathrm{rad}}^s(\R^3)$.
\end{theorem}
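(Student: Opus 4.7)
The plan is a Banach contraction argument for the Duhamel map $\Phi$ of \eqref{eq:Smap}, run on a closed ball of $\mathcal{C}([-T,T],\widetilde{H}^s_{\alpha,\mathrm{rad}}(\mathbb{R}^3))$ for a small $T>0$ depending only on $\|f\|_{\widetilde{H}^s_\alpha}$ and $\|w\|_{W^{s,p}}$. Since the functional calculus makes $e^{\ii t\Delta_\alpha}$ unitary on every $\widetilde{H}^s_\alpha$ and the reduction \eqref{eq:DaS} ensures that both the propagator and the nonlinearity leave the radial sector invariant, the whole scheme reduces, in the standard way, to the trilinear estimate
\begin{equation}\label{eq:plan-cubic-high}
\|(w*|u|^2)\,u\|_{\widetilde{H}^s_\alpha}\;\lesssim\;\|w\|_{W^{s,p}}\,\|u\|_{\widetilde{H}^s_\alpha}^{\,3}
\end{equation}
for radial $u\in\widetilde{H}^s_{\alpha,\mathrm{rad}}(\mathbb{R}^3)$, together with its Lipschitz counterpart for differences; existence, uniqueness, the blow-up alternative, and continuous dependence on the initial datum then follow by the usual Picard scheme.

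To prove \eqref{eq:plan-cubic-high} the key tool is the explicit characterisation of $\widetilde{H}^s_\alpha$ for $s\in(\tfrac{3}{2},2]$ recalled in Section~\ref{sec:preparatory_material} from \cite{Georgiev-M-Scandone-2016-2017}: every $u\in\widetilde{H}^s_{\alpha,\mathrm{rad}}$ decomposes uniquely as $u=\phi+cG_\lambda$ with $\phi\in H^s_{\mathrm{rad}}(\mathbb{R}^3)$ and $c\in\mathbb{C}$ constrained by the boundary condition $\phi(0)=c(\alpha+\sqrt{\lambda}/(4\pi))$, under the norm equivalence $\|u\|_{\widetilde{H}^s_\alpha}\simeq\|\phi\|_{H^s}+|c|$. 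Writing $F:=w*|u|^2$, one first checks that $F$ is regular enough to be multiplied by $u$: from $|u|^2=|\phi|^2+2\,\mathrm{Re}(\bar\phi\,c\,G_\lambda)+|c|^2 G_\lambda^2$, together with $G_\lambda, G_\lambda^2\in L^1(\mathbb{R}^3)$, Young's inequality yields $\|F\|_{W^{s,p}}\lesssim\|w\|_{W^{s,p}}\|u\|_{\widetilde{H}^s_\alpha}^2$, and the Sobolev embedding $W^{s,p}(\mathbb{R}^3)\hookrightarrow L^\infty(\mathbb{R}^3)$ (applicable since $s>3/2>3/p$) also gives $\|F\|_{L^\infty}\lesssim\|w\|_{W^{s,p}}\|u\|_{\widetilde{H}^s_\alpha}^2$.

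One then splits the nonlinearity as
\begin{equation*}
F\,u\;=\;\bigl(F\phi+c(F-F(0))G_\lambda\bigr)\;+\;c\,F(0)\,G_\lambda\;=:\;\widetilde\phi+\widetilde c\,G_\lambda\,,
\end{equation*}
and checks that this representation fits the characterisation of $\widetilde{H}^s_\alpha$. The spherical symmetry of $w$ plays the decisive role at this point: coupled with the radiality of $|u|^2$ it forces $F$ to be radial, and any sufficiently regular radial function has vanishing gradient at the origin, so that $F(x)-F(0)=O(|x|^2)$ as $x\to 0$. Two consequences follow at once: on one hand $\widetilde\phi(0)=F(0)\phi(0)=\widetilde c\,(\alpha+\sqrt{\lambda}/(4\pi))$, which is precisely the boundary condition demanded of $\widetilde\phi$ by the characterisation; on the other, the a priori singular summand $c(F-F(0))G_\lambda$ vanishes like $|x|$ at the origin, and since $|x|\,e^{-\sqrt{\lambda}|x|}\in H^2(\mathbb{R}^3)$ in three dimensions this summand sits in $H^s$ with a quantitative bound in terms of $\|F\|_{W^{s,p}}$. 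A fractional Leibniz/Kato--Ponce estimate on $F\phi$ together with the direct estimate on the mixed remainder then combine to give $\|\widetilde\phi\|_{H^s}+|\widetilde c|\lesssim\|w\|_{W^{s,p}}\|u\|_{\widetilde{H}^s_\alpha}^3$, which is \eqref{eq:plan-cubic-high}.

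The main obstacle will be the rigorous $H^s$-control of the mixed term $c(F-F(0))G_\lambda$ at non-integer $s\in(\tfrac{3}{2},2)$: the cancellation $F(x)-F(0)=O(|x|^2)$ is a pointwise statement at smooth radial profiles, but exploiting it inside a fractional Sobolev norm requires a careful Kato--Ponce-type manipulation that quantifies how the radial Sobolev regularity of $F$, combined with the vanishing of its first radial moment at $0$, compensates the $1/|x|$ singularity of $G_\lambda$. Without the radial assumption on $w$ the vanishing of $F-F(0)$ would generically be only of order one, so $(F-F(0))G_\lambda$ would be bounded but discontinuous at the origin, failing to belong to $H^s$ as soon as $s>1/2$ and simultaneously violating the boundary condition required by the characterisation of $\widetilde{H}^s_\alpha$; this is the structural reason for restricting the statement to spherically symmetric $w$ and radial initial data.
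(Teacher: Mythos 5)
Your overall architecture coincides with the paper's: decompose $u=\phi+cG_\lambda$ via Theorem \ref{thm:domain_s}(iii), split $Fu=\bigl(F\phi+c(F-F(0))G_\lambda\bigr)+cF(0)G_\lambda$ with $F=w*|u|^2$, use radiality of $w$ and $u$ to get $(\nabla F)(0)=0$ and hence both the improved vanishing of $F-F(0)$ at the origin and the preservation of the boundary condition $\widetilde\phi(0)=\widetilde c\,(\alpha+\frac{\sqrt\lambda}{4\pi})$, and then run a Picard iteration off the resulting trilinear bound (the paper's \eqref{eq:LaLh}, via Lemmas \ref{convu_bou}--\ref{produ_bou}). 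However, the proposal has a genuine gap exactly where the paper invests its main technical effort: the $H^s$-control of the mixed term $c\,(F-F(0))G_\lambda$ for fractional $s\in(\frac32,2]$. You flag this as ``the main obstacle'' but do not prove it, and the justification you sketch in its place is not correct. First, the decay you invoke, $F(x)-F(0)=O(|x|^2)$, is not available at this regularity: $F\in W^{s,q}$ only embeds into $\mathcal{C}^{1,\vartheta}$ with $\vartheta=s-1-\frac3q<1$ (see \eqref{eq:SobEmb-II}), so with $(\nabla F)(0)=0$ the best one gets is $|F(x)-F(0)|\lesssim|x|^{1+\vartheta}$ — this is precisely Lemma \ref{le:fra_decay}. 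Second, and more seriously, the inference ``the summand vanishes like $|x|$ at the origin and $|x|e^{-\sqrt\lambda|x|}\in H^2$, hence it sits in $H^s$'' conflates pointwise vanishing with Sobolev regularity of a product: membership of $(F-F(0))G_\lambda$ in $H^s$ requires controlling $s$ fractional derivatives of a product in which one factor has only $s$ derivatives in $L^q$ and the other has a $|x|^{-1}$ singularity. In the paper this is Proposition \ref{tstdue}, whose proof needs the higher-order commutator estimate \eqref{eq:KatoPonce_Vladimir_quattro} of Theorem \ref{KatoPonce_Vladimir}(ii) (to redistribute derivatives without putting $s$ of them on $G_\lambda$ in an $L^p$ with $p\geqslant 2$), the pointwise bound \eqref{deripower} on $\mathcal{D}^sG_\lambda$, and the decay of Lemma \ref{le:fra_decay}; none of this is replaced by your argument.

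Two smaller points in the same direction. (a) You only record $\|F\|_{W^{s,p}}\lesssim\|w\|_{W^{s,p}}\|u\|_{\widetilde H^s_\alpha}^2$ with $p\in(2,+\infty)$ possibly $\leqslant 6$; the machinery that would close the mixed term (Proposition \ref{tstdue}, and likewise Lemma \ref{produ_bou}) needs $F\in W^{s,q}$ for some $q>6$, because one must place $G_\lambda$ (or $\widetilde G$) in $L^{\frac{2q}{q-2}}$ with $\frac{2q}{q-2}<3$. The paper obtains this by proving $\|F\|_{W^{s,3p}}\lesssim\|w\|_{W^{s,p}}\|u\|_{\widetilde H^s_\alpha}^2$ (Lemma \ref{convu_bou}(i)), using the embedding $\widetilde H^s_\alpha\hookrightarrow L^{\frac{6p}{3p-2}}$; your Young-inequality step should be upgraded accordingly. (b) Your remark that without radial symmetry the mixed term would be ``bounded but discontinuous'' and fail the boundary condition is the right structural intuition and matches the paper's Section \ref{sec:spheric}, but it does not substitute for the quantitative estimate above. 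With Proposition \ref{tstdue} (or an equivalent fractional Leibniz argument) supplied, the rest of your scheme — including contracting directly in the $\widetilde H^s_\alpha$-norm rather than, as the paper does, in the weaker $L^\infty_tL^2_x$ metric on the ball $\mathcal{X}^{(0)}_{T,M}$ — does go through.
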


The transition cases $s=\frac{1}{2}$ and $s=\frac{3}{2}$ are not covered explicitly for the mere reason that the structure of the perturbed Sobolev spaces $\widetilde{H}^{1/2}_\alpha(\mathbb{R}^3)$ and $\widetilde{H}^{3/2}_\alpha(\mathbb{R}^3)$ is not as clean as that of $\widetilde{H}^{s}_\alpha(\mathbb{R}^3)$ when $s\notin\{\frac{1}{2},\frac{3}{2}\}$ -- see Theorem \ref{thm:domain_s} below for the general case and Remark \ref{rem:transition_cases} for the peculiarities of the transition cases.

Let us remark that for $s>0$ we have an actual `continuity' in $s$ of the assumption on $w$ in the three Theorems \ref{thm:low_reg}, \ref{thm:high_reg}, and \ref{highh} above -- in the low regularity case our proof does not require any control on derivatives of $w$ and therefore we find it more informative to formulate the assumption in terms of the Lorentz space corresponding to $W^{s,p}(\mathbb{R}^3)$.

Such a `continuity' is due to the fact that under the hypotheses of Theorems \ref{thm:low_reg}, \ref{thm:high_reg}, and \ref{highh} we can work in a locally-Lipschitz regime of the non-linearity. When instead $s=0$ we have a `jump' in the form of an extra range of admissible potentials $w$, which is due to the fact that for the $L^2$-theory we are able to make use of the Strichartz estimates for the singular Laplacian.

Next, we investigate the global theory in the mass and in the energy spaces.

\begin{theorem}[Global solution theory in the mass space]\label{thm:GWP_Mass}
	Let $\alpha\geqslant 0$, and let $w\in L^{\infty}(\mathbb{R}^3)\cap W^{1,3}(\R^3)$, or  $w\in L^{\frac{3}{\gamma},\infty}(\mathbb{R}^3)$ for $\gamma\in(0,\frac32)$. Then the Cauchy problem \eqref{eq:Cauchy_problem_sing_Hartree} is globally well-posed in $L^2(\R^3)$.
\end{theorem}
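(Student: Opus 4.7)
The plan is to bootstrap the local $L^2$-well-posedness of Theorem \ref{thm:L2_reg} to global well-posedness by establishing conservation of mass
\[
\|u(t)\|_2^2\;=\;\|f\|_2^2\qquad\forall\,t\in(-T_*,T^*)
\]
along the flow, and then invoking the blow-up alternative in Theorem \ref{thm:L2_reg}(ii) to force $T_*=T^*=+\infty$. Both classes of admissible potentials in the statement are covered by Theorem \ref{thm:L2_reg}: $w\in L^\infty\cap W^{1,3}\subset L^\infty$ corresponds to $\gamma=0$ in the Lorentz scale $L^{3/\gamma,\infty}$, while the Lorentz case $\gamma\in(0,\frac{3}{2})$ is immediate.

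For the conservation of mass, the formal pairing of \eqref{eq:sing_Hartree} with $\bar u$ and taking imaginary parts gives
\[
\tfrac{\ud}{\ud t}\|u(t)\|_2^2\;=\;2\,\im\langle u,-\Delta_\alpha u\rangle+2\,\im\!\int_{\mathbb{R}^3}(w*|u|^2)|u|^2\,\ud x\;=\;0,
\]
since $-\Delta_\alpha$ is self-adjoint and $w$ is real-valued. To make this rigorous at mere $L^2$-regularity, my approach is a density argument with smoother initial data. In the case $w\in L^\infty\cap W^{1,3}$, Theorem \ref{thm:high_reg} (applied with $s=1$ and $p=3$) provides local well-posedness in the energy space $\widetilde{H}^1_\alpha$; approximating $f\in L^2$ by $f_n\in\widetilde{H}^1_\alpha$ yields solutions $u_n\in\mathcal{C}([0,T],\widetilde{H}^1_\alpha)$ for which $\langle u_n,-\Delta_\alpha u_n\rangle$ is well-defined through the quadratic form \eqref{eq:form_dom-opaction} and real, justifying the identity $\|u_n(t)\|_2=\|f_n\|_2$. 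In the Lorentz case $\gamma\in(0,\frac{3}{2})$, the same conclusion is reached by approximating $f$ and, if needed, $w$ by smooth truncations in $L^{3/\gamma,\infty}$, and invoking the Strichartz-based stability of the fixed-point construction underlying Theorem \ref{thm:L2_reg}. In either case, continuity in the initial datum (Theorem \ref{thm:L2_reg}(iii)), together with uniqueness of the $L^2$-solution, propagates $\|u_n(t)\|_2=\|f_n\|_2$ to $\|u(t)\|_2=\|f\|_2$ in the limit $n\to\infty$.

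The main obstacle I anticipate is precisely the rigorous justification of mass conservation in the rough regime $s=0$, where $\langle u,-\Delta_\alpha u\rangle$ is meaningless for generic $u\in L^2$ and the pairing argument is inherently indirect. The two cases require distinct implementations: the $W^{1,3}$-assumption on $w$ unlocks the $\widetilde{H}^1_\alpha$-local theory and a straightforward persistence-of-regularity step, whereas the Lorentz case must rely on a Strichartz approximation scheme, with the continuity of the map $(f,w)\mapsto u$ in both arguments controlled by a Gronwall-type estimate in the relevant Strichartz norms. Once the conservation of mass is in place, the blow-up alternative immediately rules out $T_*$ or $T^*$ being finite, finishing the proof.
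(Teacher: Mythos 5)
Your proposal is correct and follows essentially the same route as the paper: Theorem \ref{thm:GWP_Mass} is there reduced to mass conservation (Proposition \ref{mass_cons}) plus the $L^2$ blow-up alternative, with conservation first established for data $f\in\widetilde{H}^1_\alpha$ (and, in the Lorentz case, Schwartz potentials) via the differential formulation of Corollary \ref{soddire} and the imaginary-part pairing, and then transferred to general $f$ (and $w$) by density, using continuous dependence in $L^2$ for $w\in L^\infty\cap W^{1,3}$ and the Strichartz-based stability of Proposition \ref{stabint_mass} for $w\in L^{\frac{3}{\gamma},\infty}$. The only detail you leave implicit is that the stability argument in the Lorentz case gives conservation only on a small time interval, so the paper re-runs it from $u(t_0)$ to conclude that $t\mapsto\mathcal{M}(u(t))$ is locally constant, hence constant, on the whole maximal interval.
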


\begin{theorem}[Global solution theory in the energy space]\label{thm:GWP}
	Let $\alpha\geqslant 0$, $w\in W^{1,p}_{\mathrm{rad}}(\mathbb{R}^3)$ for $p\in(2,+\infty)$, and $f\in\widetilde{H}^1_{\alpha,\mathrm{rad}}(\mathbb{R}^3)$.
	\begin{itemize}
		\item[(i)] There exists a constant $C_w>0$, depending only on $\|w\|_{W^{1,p}}$, such that if $\|f\|_{L^2}\leqslant C_w$, then the unique strong solution in $\widetilde{H}^1_{\alpha,\mathrm{rad}}(\mathbb{R}^3)$ to \eqref{eq:Cauchy_problem_sing_Hartree} with initial data $f$ is global.
		\item[(ii)] If $w\geqslant 0$, then the Cauchy problem \eqref{eq:Cauchy_problem_sing_Hartree} is globally well-posed in $\widetilde{H}^1_{\alpha,\mathrm{rad}}(\mathbb{R}^3)$.
	\end{itemize}
\end{theorem}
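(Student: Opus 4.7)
The plan is to combine the local well-posedness guaranteed by Theorem~\ref{thm:high_reg} (applicable since $s=1\in(\tfrac12,\tfrac32)$) with the conservation of mass and energy, and then to invoke the blow-up alternative: it suffices to prove a time-uniform a priori bound on $\|u(t)\|_{\widetilde H^1_\alpha}$ on the maximal existence interval. Conservation of mass $\mathcal M(u)=\|u\|_{L^2}^2$ follows the standard path, taking the imaginary part of $\langle u,\ii\partial_t u\rangle$ and exploiting the self-adjointness of $-\Delta_\alpha$ together with the fact that $(w*|u|^2)|u|^2$ is real-valued (radiality of $w$ gives $w(x)=w(-x)$, so the energy functional
\[
\mathcal E(u)\;=\;\tfrac12\|(-\Delta_\alpha)^{1/2}u\|_{L^2}^{\,2}+\tfrac14\!\int_{\R^3}(w*|u|^2)\,|u|^2\,\ud x
\]
is well-defined and formally conserved). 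To justify energy conservation rigorously, I approximate $f\in\widetilde H^1_{\alpha,\mathrm{rad}}$ by a sequence $f_n\in\widetilde H^2_{\alpha,\mathrm{rad}}$ (density of the operator domain in the form domain), solve the Cauchy problem with data $f_n$ via Theorem~\ref{highh} to obtain local $\widetilde H^2_\alpha$-solutions $u_n$, for which the calculation $\frac{d}{dt}\mathcal E(u_n)=\re\langle \dot u_n, -\Delta_\alpha u_n+(w*|u_n|^2)u_n\rangle=\re\langle \dot u_n,\ii\dot u_n\rangle=0$ is legitimate, and then pass to the limit using continuous dependence (Theorem~\ref{thm:high_reg}(iii)) and continuity of $\mathcal E$ with respect to the $\widetilde H^1_\alpha$-norm.

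Combining the two conservation laws with the identity $\|u\|_{\widetilde H^1_\alpha}^2=\|u\|_{L^2}^2+\|(-\Delta_\alpha)^{1/2}u\|_{L^2}^2$ yields the master relation
\[
\|u(t)\|_{\widetilde H^1_\alpha}^2\;=\;\|f\|_{L^2}^2+2\mathcal E(f)-\tfrac12\!\int_{\R^3}(w*|u(t)|^2)|u(t)|^2\,\ud x\,.
\]
Part (ii) is then immediate: when $w\geqslant 0$ the last integral is non-negative, so $\|u(t)\|_{\widetilde H^1_\alpha}^2\leqslant\|f\|_{L^2}^2+2\mathcal E(f)$ is bounded uniformly in $t$, ruling out finite-time blow-up and producing global existence.

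Part (i) requires a nonlinear estimate. Applying Young's inequality for convolutions together with Hölder's inequality, and using the fact that $w\in W^{1,p}(\R^3)$ yields $w\in L^p\cap L^{3p/(3-p)}$ for $p\in(2,3)$ (respectively $w\in L^\infty$ for $p>3$) via Sobolev embedding, one arrives at a bound of the form $\big|\int(w*|u|^2)|u|^2\big|\leqslant C\|w\|_{W^{1,p}}\|u\|_{L^q}^{4}$ for some $q\in[2,3)$. Since $\widetilde H^1_{\alpha,\mathrm{rad}}(\R^3)\hookrightarrow L^q(\R^3)$ for every $q\in[2,3)$ -- the $H^1$-part of $u=\phi+\kappa G_\lambda$ embeds by standard Sobolev, while $G_\lambda\in L^q(\R^3)$ for $q<3$ -- an interpolation of $L^q$ between $L^2$ and $L^{3-\varepsilon}$ transfers this into
\[
\Big|\!\int(w*|u|^2)|u|^2\,\ud x\Big|\;\leqslant\;C\|w\|_{W^{1,p}}\|u\|_{L^2}^{a}\|u\|_{\widetilde H^1_\alpha}^{b},\qquad a+b=4,\quad b\leqslant 2\,.
\]
Substituting into the master relation and using mass conservation gives
\[
\|u(t)\|_{\widetilde H^1_\alpha}^2\;\leqslant\;C_1(f,w)+C_2\|w\|_{W^{1,p}}\|f\|_{L^2}^{a}\|u(t)\|_{\widetilde H^1_\alpha}^{b}\,.
\]
Choosing $C_w$ so that $C_2\|w\|_{W^{1,p}}C_w^{\,a}\leqslant\tfrac12$ (in the borderline $b=2$ case) absorbs the right-most term into the left-hand side, producing the desired uniform a priori bound and global existence via the blow-up alternative.

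The main obstacle I anticipate is the rigorous proof of energy conservation: one must produce a regularization of the initial datum inside $\widetilde H^2_{\alpha,\mathrm{rad}}$ compatible with the intricate structure of the operator domain recalled in Section~\ref{sec:preparatory_material}, and verify that the nonlinear interaction functional is genuinely continuous on $\widetilde H^1_\alpha$ (this uses the same Sobolev-type embedding into $L^{3-\varepsilon}$). Secondary difficulties are the careful selection of exponents in the interaction estimate so that $b\leqslant 2$, and the verification that the embedding $\widetilde H^1_{\alpha,\mathrm{rad}}\hookrightarrow L^{3-\varepsilon}$ is available with a constant not so singular as to spoil the absorption argument.
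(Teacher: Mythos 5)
Your overall strategy is the same as the paper's: local well-posedness from Theorem \ref{thm:high_reg} at $s=1$, conservation of mass and energy, the exact identity $\|u\|_{\widetilde{H}^1_\alpha}^2=\|u\|_{L^2}^2+(-\Delta_\alpha)[u]$, and then the blow-up alternative, with positivity of $w$ handling (ii) and an absorption via smallness of $\|f\|_{L^2}$ handling (i) (the paper's interaction bound is precisely your borderline case $a=b=2$, obtained from $\|w*|u|^2\|_{L^\infty}\lesssim\|w\|_{W^{1,p}}\|u\|^2_{\widetilde H^1_\alpha}$, i.e.\ estimate \eqref{eq:convu}, so your exponent bookkeeping is not an obstacle). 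The genuine gap is in your justification of energy conservation. You propose to regularise only the datum, taking $f_n\in\widetilde H^2_{\alpha,\mathrm{rad}}$ and solving with Theorem \ref{highh}; but Theorem \ref{highh} at $s=2$ (and Corollary \ref{diffsensehigh}, which is what makes the identity $\ii\partial_t u_n=-\Delta_\alpha u_n+(w*|u_n|^2)u_n$ hold in $L^2$ and legitimises the computation $\frac{\ud}{\ud t}\mathcal E(u_n)=0$) requires $w\in W^{2,p}(\R^3)$ and spherically symmetric, whereas the hypothesis of the theorem only provides $w\in W^{1,p}_{\mathrm{rad}}(\R^3)$. With $w$ merely in $W^{1,p}$ the trilinear estimate \eqref{eq:LaLh} at $s=2$ is unavailable, the nonlinearity need not map $\widetilde H^2_{\alpha,\mathrm{rad}}$ into itself, and you cannot produce the $\widetilde H^2_\alpha$-solutions $u_n$ on which your formal computation rests.

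The repair is the one the paper implements in Proposition \ref{enel_cons}: approximate \emph{both} $f_n\to f$ in $\widetilde H^1_\alpha$ with $f_n\in\widetilde H^2_{\alpha,\mathrm{rad}}$ \emph{and} $w_n\to w$ in $W^{1,p}$ with $w_n\in W^{2,p}_{\mathrm{rad}}$. This forces a second change in your limiting argument: the continuous-dependence clause (iii) of local well-posedness only covers variation of the initial datum at fixed potential, so it does not suffice once $w$ is also perturbed; one needs the stability result of Proposition \ref{stabint} (convergence $u_n\to u$ in $\mathcal C([-T,T],\widetilde H^1_\alpha)$ under simultaneous perturbation of $w$ and $f$), combined with the continuity of $\mathcal E$ on $\widetilde H^1_\alpha$ (Lemma \ref{preli}, which is the continuity check you already flag). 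Finally, since the stability statement only yields conservation on a time interval $[-T,T]$ determined by the norms of $w$ and $f$, one must propagate it to the whole maximal interval by re-running the argument around an arbitrary $t_0$ and using that a locally constant, continuous function of $t$ is constant; this last step is routine but should be stated. With these modifications your proof becomes essentially identical to the paper's.
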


As stated in the Theorems above, part of the local and of the global solution theory is set for spherically symmetric potentials $w$ and solutions $u$. In a sense, this is the natural solution theory for the singular Hartree equation, for sufficiently high regularity. In particular, the spherical symmetry needed for the high regularity theory is induced naturally by the special structure of the space $\widetilde{H}^s_\alpha(\mathbb{R}^3)$ (as opposite to $H^s(\mathbb{R}^3)$, or also to $\widetilde{H}^s_\alpha(\mathbb{R}^3)$ for small $s$), where a boundary (`contact') condition holds between regular and singular component of $\widetilde{H}^s_\alpha$-functions.
In the concluding Section \ref{sec:spheric} we comment on this phenomenon, with the proofs of our main Theorems in retrospective.

Before concluding this general introduction, it is worth mentioning that the \emph{one-dimensional} version of the non-linear Schr\"{o}d\-ing\-er equation with point-like pseudo-potentials is much more deeply investigated and better understood, as compared to the so far virtually unexplored scenario in three dimensions.

On $L^2(\mathbb{R})$ the Hamiltonian of point interaction ``$-\frac{\ud^2}{\ud x^2}+\delta(x)$'' is constructed in complete analogy to $-\Delta_\alpha$, namely as a self-adjoint extension of $(-\frac{\ud^2}{\ud x^2})\big|_{C^\infty_0(\mathbb{R}\setminus\{0\})}$, which results in a larger variety (a two-parameter family) of realisations, each of which is qualified by an analogous boundary condition at $x=0$ \cite[Chapters I.3 and I.4]{albeverio-solvable}. In fact, such an analogy comes with a profound difference, for the one-dimensional Hamiltonians of point interaction are form-bounded perturbations of the Laplacian, unlike $-\Delta_\alpha$ with respect to $-\Delta$ on $L^2(\mathbb{R}^3)$, and hence much less singular and with a more easily controllable domain. For instance, among the other realisations,  one can non-ambiguously think of $-\frac{\ud^2}{\ud x^2}+\delta(x)$  as a form sum, $\delta(x)$ now denoting there the Dirac distribution.

In the last dozen years  a systematic analysis was carried out of the non-linear Schr\"{o}dinger equation in one dimension, mainly with local non-linearity, of the form
\[
\ii\partial_t u\;=\;-\big({\textstyle\frac{\ud^2}{\ud x^2}}+\delta(x)\big)u+\alpha|u|^{\gamma-1}u\,,
\]
or the analogous equation with $\delta'$-interaction instead of $\delta$-interaction, initially motivated by phenomenological models of short-range obstacles in non-linear transport \cite{Witthaut-Mossmann-Korsh-2005}. This includes local and global well-posedness in operator domain and energy space and blow-up phenomena  \cite{Adami-Sacchetti-2005-1D-NLS-delta,Adami_JPA2009_1D_NLS_with_delta,Adami-Noja-CMP2013-1D-NLS-deltaprime}, weak $L^p$-solutions \cite{Pava-Ferreira-2014_1D-NLS-delta-deltaprime}, scattering \cite{Banica-Visciglia-2016_Scattering-NLS-delta}, solitons \cite{Ianni-Coz-Royier-2017_NLS-delta,Ikeda-Takahisa-2017_global-1D-NLS-delta}, as well as more recent modifications of the non-linearity \cite{Ardila-2017_NLS-deltaprime_lognonlin}. None of such works has a three-dimensional counterpart.

\section{Preparatory materials}\label{sec:preparatory_material}

In this Section we collect an amount of materials available in the literature, which will be crucial for the following discussion.

We start with the following characterisation, proved by two of us in a recent collaboration with V.~Georgiev, of the fractional Sobolev spaces and norms naturally induced by $-\Delta_{\alpha}$, that is, the spaces $\widetilde{H}^s_\alpha(\mathbb{R}^3)$ introduced in \eqref{eq:Hsperturbed}-\eqref{eq:Hsperturbed-norm}.

\begin{theorem}[Perturbed Sobolev spaces and norms, \cite{Georgiev-M-Scandone-2016-2017}]\label{thm:domain_s}
	Let $\alpha\geqslant 0$, $\lambda>0$, and $s\in[0,2]$. The following holds.
	\begin{itemize}
		\item[(i)] If $s\in[0,\frac{1}{2})$, then
		\begin{equation}\label{eq:Ds_s0-1/2}
		\widetilde{H}^s_\alpha(\mathbb{R}^3)\;=\;H^s(\mathbb{R}^3)
		\end{equation}
		and 
		\begin{equation}\label{eq:equiv_of_norms_s012}
		\|\psi\|_{\widetilde{H}^s_\alpha}\;\approx\;\|\psi\|_{H^s}
		\end{equation}
		in the sense of equivalence of norms. The constant in \eqref{eq:equiv_of_norms_s012} is bounded, and bounded away from zero, uniformly in $\alpha$.
		\item[(ii)] If $s\in(\frac{1}{2},\frac{3}{2})$, then
		\begin{equation}\label{eq:Ds_s1/2-3/2}
		\widetilde{H}^s_\alpha(\mathbb{R}^3)\;=\;H^s(\mathbb{R}^3)\dotplus\mathrm{span}\{G_\lambda\}\,,
		\end{equation}
		where $G_\lambda$ is the function \eqref{eq:defGlambda}, and for arbitrary $\psi=\phi_\lambda+\kappa_\lambda\,G_\lambda\in \widetilde{H}^s_\alpha(\mathbb{R}^3)$
		\begin{equation}\label{eq:equiv_of_norms_s1232}
		\|\phi+\kappa_\lambda\,G_\lambda\|_{\widetilde{H}^s_\alpha}\;\approx\;\|\phi\|_{H^s}+(1+\alpha)|\kappa_\lambda|\,.
		\end{equation}
		\item[(iii)] If $s\in(\frac{3}{2},2]$, then
		\begin{equation}\label{eq:Ds_s3/2-2}
		\begin{split}
		\widetilde{H}^s_\alpha(\mathbb{R}^3)\;= \;\Big\{\psi\in L^2(\mathbb{R}^3)\,\Big|\,\psi=\phi_\lambda+\frac{\phi_\lambda(0)}{\,\alpha+\frac{\sqrt{\lambda}}{4\pi}\,}\,G_\lambda\textrm{ with }\phi_\lambda\in H^s(\mathbb{R}^3)\Big\}
		\end{split}
		\end{equation}
		and for arbitrary $\psi=\phi_\lambda+\frac{\phi_\lambda(0)}{\,\alpha+\frac{\sqrt{\lambda}}{4\pi}\,}\,G_\lambda\in \widetilde{H}^s_\alpha(\mathbb{R}^3)$
		\begin{equation}\label{eq:equiv_of_norms_s322}
		\big\|\phi_\lambda+{\textstyle\frac{\phi_\lambda(0)}{\,\alpha+\frac{\sqrt{\lambda}}{4\pi}\,}}\,G_\lambda\big\|_{\widetilde{H}^s_\alpha}\;\approx\;\|\phi_\lambda\|_{H^s}\,.
		\end{equation}
		The constant in \eqref{eq:equiv_of_norms_s322} is bounded, and bounded away from zero, uniformly in $\alpha$.
	\end{itemize}
\end{theorem}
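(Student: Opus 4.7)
The plan is to reduce everything to an explicit identity for the difference of fractional powers of $-\Delta_\alpha$ and $-\Delta$, obtained from the Krein-type resolvent formula that is already encoded in \eqref{eq:op_dom-opaction}. Solving $(-\Delta_\alpha+\lambda)\psi=f$ for $\psi\in\mathcal{D}(-\Delta_\alpha)$, one reads off $\phi_\lambda=(-\Delta+\lambda)^{-1}f$ and hence $\phi_\lambda(0)=\langle G_\lambda,f\rangle$, so that
\[
(-\Delta_\alpha+\lambda)^{-1}-(-\Delta+\lambda)^{-1}\;=\;\frac{1}{\,\alpha+\frac{\sqrt{\lambda}}{4\pi}\,}\,|G_\lambda\rangle\langle G_\lambda|,
\]
a rank-one correction. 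Feeding this into Balakrishnan's representation $A^{-s/2}=\frac{\sin(\pi s/2)}{\pi}\int_0^\infty\mu^{-s/2}(A+\mu)^{-1}\mathrm{d}\mu$ (valid for $s\in(0,2)$) applied to $A=-\Delta_\alpha+\lambda$ and $A=-\Delta+\lambda$, and using that $G_{\lambda+\mu}$ is the Green function of $-\Delta+(\lambda+\mu)$, one obtains the central identity
\[
T_s\;:=\;(-\Delta_\alpha+\lambda)^{-s/2}-(-\Delta+\lambda)^{-s/2}\;=\;\frac{\sin(\pi s/2)}{\pi}\!\int_0^\infty\!\frac{\mu^{-s/2}}{\,\alpha+\frac{\sqrt{\lambda+\mu}}{4\pi}\,}|G_{\lambda+\mu}\rangle\langle G_{\lambda+\mu}|\,\mathrm{d}\mu.
\]
Since $\widetilde{H}^s_\alpha$ is by definition the range of $(-\Delta_\alpha+\lambda)^{-s/2}$ on $L^2$ with norm $\|(\mathbbm{1}-\Delta_\alpha)^{s/2}\cdot\|_2\approx\|(-\Delta_\alpha+\lambda)^{s/2}\cdot\|_2$, the task is to analyze $\mathrm{ran}(T_s)$ regime by regime.

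For part (i), $s\in[0,\tfrac12)$, I would first compute $\|G_\mu\|_{H^s}^2=\int(1+|p|^2)^s(|p|^2+\mu)^{-2}\mathrm{d}p\asymp\mu^{s-1/2}$ (finite precisely because $s<\tfrac12$), and then run a Minkowski/Schur estimate in the $\mu$-variable, using $1/(\alpha+\sqrt{\lambda+\mu}/4\pi)\leqslant 4\pi/\sqrt{\lambda+\mu}$ to get $\alpha$-uniform control, to show $T_s:L^2\to H^s$ boundedly. Combined with the isomorphism $(-\Delta+\lambda)^{-s/2}:L^2\to H^s$, this yields $\widetilde{H}^s_\alpha=H^s$ with the equivalence \eqref{eq:equiv_of_norms_s012}, uniform in $\alpha$. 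For part (ii), $s\in(\tfrac12,\tfrac32)$, $G_\lambda\notin H^s$ but $G_\lambda\in\widetilde{H}^s_\alpha$ (via the form domain $\widetilde{H}^1_\alpha$ and spectral calculus). The decomposition $G_{\lambda+\mu}=G_\lambda+(G_{\lambda+\mu}-G_\lambda)$ — in which the subtraction cancels the common $1/(4\pi|x|)$ singularity and leaves an $H^s$ remainder as long as $s<\tfrac32$ — splits $T_s f$ canonically into a multiple of $G_\lambda$ plus an $H^s$-piece, which is the direct-sum decomposition of \eqref{eq:Ds_s1/2-3/2}; tracking the coefficient then produces \eqref{eq:equiv_of_norms_s1232}. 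For part (iii), $s\in(\tfrac32,2]$, the Sobolev trace $\phi\mapsto\phi(0)$ is bounded on $H^s$, so the coupling $\kappa_\lambda=\phi_\lambda(0)/(\alpha+\sqrt{\lambda}/4\pi)$ is well-defined; interpolating between the known endpoints $s=1$ (form domain) and $s=2$ (operator domain, which already carries that very constraint) and reading $T_s$ on that scale confirms that the range of $(-\Delta_\alpha+\lambda)^{-s/2}$ is exactly the set \eqref{eq:Ds_s3/2-2}, with \eqref{eq:equiv_of_norms_s322} following because $\kappa_\lambda$ is slaved to $\phi_\lambda$.

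The main obstacle is the delicate Schur/Minkowski bookkeeping for $T_s$ in regimes (i) and (ii): the integrand behaves like $\mu^{-s/2}(\lambda+\mu)^{s-1/2-1/2}$ near $\mu=\infty$ and must be integrable there, while near $\mu=0$ one needs the $\mu^{-s/2}$ singularity to be harmless. Both tests fail logarithmically exactly at $s=\tfrac12$ and at $s=\tfrac32$, which is the analytic reason the statement excludes those two thresholds: at $s=\tfrac12$ the $G_\lambda$-component ceases to sit inside $H^s$ (and must be peeled off as a separate summand), and at $s=\tfrac32$ the trace $\phi_\lambda(0)$ ceases to be continuous on $H^s$ (and must begin to enforce the contact boundary condition). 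The $\alpha$-uniformity claims in (i) and (iii) require separately verifying that the integral kernel depends monotonically and controllably on $\alpha\geqslant 0$, which is straightforward from the closed form above.
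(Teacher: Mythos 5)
A point of context first: this paper does not prove Theorem \ref{thm:domain_s} at all --- it imports it from \cite{Georgiev-M-Scandone-2016-2017} --- so the fair comparison is with that reference, and your architecture (Krein resolvent formula $(-\Delta_\alpha+\lambda)^{-1}-(-\Delta+\lambda)^{-1}=(\alpha+\frac{\sqrt{\lambda}}{4\pi})^{-1}|G_\lambda\rangle\langle G_\lambda|$ fed into the Balakrishnan representation, yielding the rank-one-integral operator $T_s$) is indeed the starting point used there; the identity for $T_s$ is correct. The genuine gap is in the quantitative core. Estimating $\|T_sf\|_{H^s}$ by Minkowski in $\mu$, with $|\langle G_{\lambda+\mu},f\rangle|\leqslant\|G_{\lambda+\mu}\|_{L^2}\|f\|_{L^2}\asymp(\lambda+\mu)^{-1/4}\|f\|_{L^2}$, $\|G_{\lambda+\mu}\|_{H^s}\asymp(\lambda+\mu)^{\frac{s}{2}-\frac14}$ and $(\alpha+\frac{\sqrt{\lambda+\mu}}{4\pi})^{-1}\lesssim(\lambda+\mu)^{-1/2}$, gives the integrand $\mu^{-s/2}(\lambda+\mu)^{\frac{s}{2}-1}\sim\mu^{-1}$ as $\mu\to\infty$: this diverges logarithmically for \emph{every} $s\in(0,2)$, not only at $s=\frac12,\frac32$, so the scheme fails already in the ``easy'' regime (i). (Your own displayed exponent $\mu^{-s/2}(\lambda+\mu)^{s-1}$ is likewise never integrable at infinity for $s>0$, so the claim that the bookkeeping fails ``exactly at the thresholds'' is internally inconsistent.) The loss comes from decoupling $f$ from the family $\{G_{\lambda+\mu}\}_\mu$ by taking norms under the integral; the proof has to keep them coupled, e.g.\ work on the Fourier side, integrate in $\mu$ first at fixed frequency (using $\widehat{G_{\lambda+\mu}}(p)=(2\pi)^{-3/2}(p^2+\lambda+\mu)^{-1}$, which produces genuine decay in $p$ after the $\mu$-integration), and only then take weighted $L^2_p$-norms, with an almost-orthogonality/Schur argument in the frequency variables --- this is precisely where the work of the cited paper lies. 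The thresholds then do emerge correctly, e.g.\ the coefficient of $G_\lambda$ produced by your (correct) splitting $G_{\lambda+\mu}=G_\lambda+(G_{\lambda+\mu}-G_\lambda)$ is $c_s\int_0^\infty\mu^{-s/2}(\alpha+\frac{\sqrt{\lambda+\mu}}{4\pi})^{-1}\langle G_{\lambda+\mu},f\rangle\,\ud\mu$, convergent exactly for $s>\frac12$.

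There are also structural gaps. In (i) you only argue $\widetilde{H}^s_\alpha\subseteq H^s$: the reverse inclusion does not follow from boundedness of $T_s:L^2\to H^s$, and no soft principle (Heinz inequality, interpolation) supplies it, since the two scales already differ at $s=1$; it needs its own estimate of the same type. In (ii), the claim $G_\lambda\in\widetilde{H}^s_\alpha$ ``via the form domain'' only covers $s\leqslant 1$; for $s\in(1,\frac32)$ it is part of what must be proven. In (iii), ``interpolating between $s=1$ and $s=2$'' cannot work as stated: the boundary-condition constraint is absent for all $s<\frac32$ and present at $s=2$, and interpolation of the two endpoint descriptions says nothing about where it switches on, nor does it give the exact coefficient $\phi_\lambda(0)/(\alpha+\frac{\sqrt{\lambda}}{4\pi})$. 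One must instead compute (not merely bound) the $\mu$-integral defining the singular coefficient of $T_sf$, identify it with the trace $\phi_\lambda(0)$ via the continuity of $\phi\mapsto\phi(0)$ on $H^s$ for $s>\frac32$, and separately prove that every function satisfying the constraint lies in $\mathcal{D}((-\Delta_\alpha)^{s/2})$. In short: right skeleton, matching the cited source, but the decisive estimates and the identification steps are exactly the parts the sketch does not deliver.
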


\begin{remark}
	The case $s=0$ is trivial, the case $s=1$ reproduces the form domain of $-\Delta_{\alpha}$ given in \eqref{eq:op_dom-opaction} above, the case $s=2$ reproduces the operator domain \eqref{eq:form_dom-opaction}.
\end{remark}

\begin{remark}\label{rem:transition_cases}
	Separating the three regimes above, two different transitions occur (see \cite[Section 8]{Georgiev-M-Scandone-2016-2017}). When $s$ decreases from larger values, the first transition arises at $s=\frac{3}{2}$, namely the level of $H^s$-regularity at which continuity is lost. Correspondingly, the elements in $\widetilde{H}^{3/2}_\alpha(\mathbb{R}^3)$ still decompose into a regular $H^{\frac{3}{2}}$-part plus a multiple of $G_\lambda$ (singular part), and the decomposition is still of the form $\phi_\lambda+\kappa_\lambda\,G_\lambda$, except that now $\phi_\lambda$ cannot be arbitrary in $H^{\frac{3}{2}}(\mathbb{R}^3)$: indeed, $\phi_\lambda$ has additional properties, among which the fact that its Fourier transform is integrable (a fact that is false for generic $H^{\frac{3}{2}}$-functions), and for such $\phi_\lambda$'s the constant $\kappa_\lambda$ has a form that is completely analogous to the constant in \eqref{eq:Ds_s3/2-2}, that is,
	\[
	\kappa_\lambda\;=\;\frac{1}{\,\alpha+\frac{\sqrt{\lambda}}{4\pi}\,}\,\frac{1}{\,(2\pi)^{\frac{3}{2}}}\int_{\mathbb{R}^3}\!\ud p\,\widehat{\phi_\lambda}(p)
	\]
	(see \cite[Prop.~8.2]{Georgiev-M-Scandone-2016-2017}).
	Then, for $s<\frac{3}{2}$, the link between the two components disappears completely.
	Decreasing $s$ further, the next transition occurs at $s=\frac{1}{2}$, namely the level of $H^s$-regularity below which the Green's function itself belongs to $H^s(\mathbb{R}^3)$ and it does not necessarily carry the leading singularity any longer. At the transition $s=\frac{1}{2}$, the elements in $\widetilde{H}^{1/2}_\alpha(\mathbb{R}^3)$ still exhibit a decomposition into a regular $H^{\frac{1}{2}}$-part plus a more singular ${H^{\frac{1}{2}}}^-$-part, except that ${H^{\frac{1}{2}}}^-$-singularity is not explicitly expressed in terms of the Green's function $G_\lambda$ (see \cite[Prop.~8.1]{Georgiev-M-Scandone-2016-2017}). Then, for $s<\frac{1}{2}$, only $H^s$-functions form the fractional domain. Remarkably, yet in the same spirit, such transition thresholds $s=\frac{1}{2}$ and $s=\frac{3}{2}$ (and their analogues) emerge in several other contexts, such as the regularity of solutions to the cubic non-linear Schr\"{o}dinger equation on the half-line with Bourgain's restricted norm methods \cite{Erdogan-Tzirakis-2015_NLS-halfline} or the regime of rank-one singular perturbations of the fractional Laplacian \cite{MOS-2018_FractionalPerturbation,MS-2018-shrinking-and-fractional}.
\end{remark}

\begin{remark}
	In the limit $\alpha\to+\infty$ (recall that $\Delta_{\alpha=\infty}$ is the self-adjoint Laplacian on $L^2(\mathbb{R}^3)$) the equivalence of norms \eqref{eq:equiv_of_norms_s1232} tends to be lost, consistently with the fact that the function $G_\lambda$ does not belong to $H^s(\mathbb{R}^3)$. Instead, the norm equivalences \eqref{eq:equiv_of_norms_s012} and \eqref{eq:equiv_of_norms_s322} remain valid in the limit $\alpha\to+\infty$, which is also consistent with the structure of the space $\widetilde{H}^s_\alpha(\mathbb{R}^3)$ in those two cases.
\end{remark}

The second class of results we want to make use of were recently proved by two of us in collaboration with Dell'Antonio, Iandoli, and Yajima, and concern the dispersive properties of the propagator $t\mapsto e^{\ii t\Delta_\alpha}$ associated with $-\Delta_\alpha$, quantified both by (dispersive) pointwise-in-time estimates and by (Strichartz-like) space-time estimates.

To this aim, let us define a pair of exponents 
$(q,r)$ \emph{admissible for} $-\Delta_\alpha$ if 
\begin{equation}\label{eq:admie}
r\in[2,3)\qquad\textrm{and}\qquad 0\;\leqslant\;\frac{2}{q}\;=\;3\,\Big(\frac{1}{2}-\frac{1}{r}\Big)\;<\;\frac{1}{2}\,,
\end{equation}
that is, $q=\frac{4r}{3(r-2)}\in(4,+\infty]$.

\begin{theorem}[\cite{Iandoli-Scandone-2017,DMSY-2017}]\label{thm:Strichartz}~
	\begin{itemize}
		\item[(i)] There is a constant $C>0$ such that, for each $r\in[2,3)$,
		\begin{equation}\label{eq:p-q}
		\|e^{\ii t \Delta_\alpha}u\|_{L^r(\mathbb{R}^3)}\;\leqslant\;C\,
		|t|^{-3(\frac12-\frac1{r})}\|u\|_{L^{r'}(\mathbb{R}^3)}\,, \qquad  t\neq 0\,.
		\end{equation} 
		\item[(ii)] Let  $(q,r)$ and $(s,p)$ be two admissible pairs 
		for $-\Delta_\alpha$.
		and $r',s'$ are dual exponents of $r,s$. 
		Then, for a 
		constant $C>0$, 
		\begin{equation} \label{stri-1}
		\|e^{\ii t\Delta_\alpha}f\|_{L^q(\mathbb{R}_t, L^r(\mathbb{R}_x^3))}\;\leqslant\;
		C\|f\|_{L^2(\mathbb{R}^3)}
		\end{equation} 
		and  
		\begin{equation} \label{stri-2}
		\left\| 
		\int_0^t e^{\ii(t-\tau)\Delta_\alpha}F(\tau)\,\ud \tau
		\right\|_{L^q(\mathbb{R}_t, L^r(\mathbb{R}^3_x))}
		\;\leqslant\;
		C\|F\|_{L^{s'}(\mathbb{R}_t, L^{p'}(\mathbb{R}^3_x))}\,.
		\end{equation}
	\end{itemize}
\end{theorem}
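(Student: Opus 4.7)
The plan is to exploit the explicit integral kernel of the propagator $e^{\ii t\Delta_\alpha}$ worked out in \cite{Scarlatti-Teta-1990,Albeverio_Brzesniak-Dabrowski-1995}, which has the structure
\[
e^{\ii t\Delta_\alpha}(x,y)\;=\;e^{\ii t\Delta}(x,y)+K_\alpha(t,x,y),
\]
where the unperturbed part is the familiar Gaussian-oscillatory kernel and the correction $K_\alpha(t,x,y)$ is an explicit expression of the form
\[
K_\alpha(t,x,y)\;=\;\frac{c}{|x|\,|y|}\,\mathcal{I}_\alpha\!\bigl(t,|x|+|y|\bigr),
\]
with $\mathcal{I}_\alpha$ a one-dimensional oscillatory integral, essentially a complementary error-function of complex argument, which for $\alpha\ge 0$ is uniformly bounded in its arguments. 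Splitting the propagator accordingly reduces the proof of part (i) to a dispersive bound for $K_\alpha(t,\cdot,\cdot)$: the free part satisfies the standard three-dimensional Schrödinger dispersive estimate $\|e^{\ii t\Delta}\|_{L^{r'}\to L^r}\lesssim |t|^{-3(\frac12-\frac1r)}$ for every $r\in[2,\infty]$, so only the perturbed piece is novel.

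The key quantitative input for (i) is that $\mathcal{I}_\alpha(t,\rho)$ carries a gain of $|t|^{-1/2}$ in $t$, so that the Schwartz kernel $K_\alpha(t,x,y)$ is pointwise controlled by $|t|^{-1/2}|x|^{-1}|y|^{-1}$, with additional oscillatory decay in $t$ that upgrades this to $|t|^{-3(\frac12-\frac1r)}$ after interpolation. The plan is therefore to establish mapping properties of the integral operator with kernel $|x|^{-1}|y|^{-1}$ between weak Lorentz spaces: since $|x|^{-1}\in L^{3,\infty}(\mathbb{R}^3)$, O'Neil's convolution-type inequality yields boundedness $L^{r'}\to L^r$ precisely when $r\in[2,3)$, the endpoint $r=3$ failing because of the logarithmic divergence of $\|\,|x|^{-1}\|_{L^{3,\infty}}^2$ against $\|u\|_{L^{3/2}}$. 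Time decay is then extracted from the oscillatory factor $e^{\ii(|x|+|y|)^2/(4t)}$ by a stationary-phase / non-stationary-phase analysis combined with the $L^2$-boundedness provided by unitarity of $e^{\ii t\Delta_\alpha}$; interpolating between the $L^2\to L^2$ unitary bound and the $L^1\to L^\infty$ bound $|t|^{-3/2}$ in the range $r\in[2,3)$ produces \eqref{eq:p-q}.

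For (ii), once the restricted dispersive estimate \eqref{eq:p-q} is in hand, I would apply the abstract machinery of Keel--Tao: the unitary group $e^{\ii t\Delta_\alpha}$ on $L^2(\mathbb{R}^3)$ together with the decay $\lesssim|t|^{-\sigma}$ with $\sigma=3(\frac12-\frac1r)\in[0,\frac12)$ for $r\in[2,3)$ immediately yields the homogeneous estimate \eqref{stri-1} and the inhomogeneous estimate \eqref{stri-2} for every pair of admissible exponents $(q,r)$ satisfying the scaling relation $\frac{2}{q}=\sigma$. The upper bound $\sigma<\frac12$, i.e.\ $q>4$, is precisely the manifestation of the fact that the sharp endpoint $\sigma=\frac12$ (corresponding to $r=3$) is unattainable for the perturbed flow, and hence the non-endpoint Keel--Tao estimate (which requires no further argument beyond $TT^\ast$ and the Hardy--Littlewood--Sobolev inequality) suffices to conclude.

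The main obstacle I anticipate is the analysis of the correction kernel $K_\alpha(t,x,y)$: in particular, one has to control the oscillatory integral $\mathcal{I}_\alpha$ uniformly in $\alpha\ge 0$ and extract the correct $|t|^{-3/2}$ decay out of it, despite the mild singularity of the integrand near the stationary point $|x|+|y|=0$ and the fact that, contrary to the free case, $K_\alpha$ is not translation invariant. A second delicate issue is that the decomposition $|x|^{-1}|y|^{-1}$ gives two separate factors, so the operator norm must be bounded by a composition of two weak-$L^3$ multipliers rather than a single convolution; this is where the strict upper bound $r<3$ enters, and it is what rules out the endpoint Strichartz pair $(q,r)=(2,6)$ that would be available for the free Laplacian.
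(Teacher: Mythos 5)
Note first that the paper does not actually prove Theorem \ref{thm:Strichartz}: it is imported from \cite{Iandoli-Scandone-2017,DMSY-2017}, and the remark following it records the two known strategies --- (a) the weighted $L^1\to L^\infty$ dispersive estimate \eqref{eq:p-q_weighted} of \cite{Dancona-Pierfelice-Teta-2006} together with the observation of \cite{Iandoli-Scandone-2017} that for $r\in[2,3)$ the weights can be removed, or (b) the $L^r$-boundedness, $r\in(1,3)$, of the wave operators for $(-\Delta_\alpha,-\Delta)$ in \cite{DMSY-2017} plus intertwining, which transfers the free dispersive estimate. Your plan (explicit propagator kernel, splitting off the free part, estimating the correction, then a non-endpoint $TT^*$ argument for (ii)) is in the spirit of route (a).

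However, your mechanism for part (i) does not close as stated. First, there is no unweighted $L^1\to L^\infty$ bound $\lesssim|t|^{-3/2}$ for $e^{\ii t\Delta_\alpha}$: the correction kernel carries the factor $|x|^{-1}$, so $e^{\ii t\Delta_\alpha}u$ is in general not bounded, and only the weighted estimate \eqref{eq:p-q_weighted} with $w=1+|x|^{-1}$ holds. Hence ``interpolating between the $L^2\to L^2$ unitary bound and the $L^1\to L^\infty$ bound $|t|^{-3/2}$'' is not available --- and if it were, it would yield \eqref{eq:p-q} for all $r\in[2,\infty]$, contradicting the restriction $r<3$ which you yourself identify as essential. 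Second, the pointwise modulus bound $|K_\alpha(t,x,y)|\lesssim c(t)\,|x|^{-1}|y|^{-1}$ alone cannot give $L^{r'}\to L^r$ boundedness for any finite $r$: the majorant is a rank-one kernel $g\otimes g$ with $g=|\cdot|^{-1}$, which belongs to no $L^r(\R^3)$ but only to $L^{3,\infty}(\R^3)$, so O'Neil/H\"older in Lorentz spaces yields at best a weak-type statement at the single exponent $r=3$ (an $L^{3/2,1}\to L^{3,\infty}$ bound), not the strong $L^{r'}\to L^r$ bounds on $[2,3)$ with the stated time decay. To obtain \eqref{eq:p-q} one must genuinely exploit the decay/oscillation in $|x|+|y|$ of the explicit one-dimensional integral in the correction term (this is the actual content of \cite{Iandoli-Scandone-2017}), or avoid the kernel altogether via the wave-operator intertwining of \cite{DMSY-2017}. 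A smaller point on (ii): since the untruncated $L^1\to L^\infty$ decay hypothesis of Keel--Tao fails here, their abstract theorem cannot be invoked literally; with \eqref{eq:p-q} in hand for the fixed $r$ of each admissible pair (note $3(\frac12-\frac1r)<\frac12$), the classical Ginibre--Velo $TT^*$ scheme with Hardy--Littlewood--Sobolev in time gives \eqref{stri-1}, and duality together with a Christ--Kiselev-type argument gives \eqref{stri-2} for mixed admissible pairs --- so the conclusion of (ii) is reachable, but by that standard non-endpoint route rather than by Keel--Tao as you state it.
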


\begin{remark}
	The dispersive estimate \eqref{eq:p-q} has a precursor in \cite{Dancona-Pierfelice-Teta-2006} in the form of the weighted $L^1_tL_x^\infty$-estimate
	\begin{equation}\label{eq:p-q_weighted}
	\|w^{-1}e^{\ii t \Delta_\alpha}u\|_{L^\infty(\mathbb{R}^3)}\;\leqslant\;C\,
	|t|^{-\frac{3}{2}}\|w\,u\|_{L^{1}(\mathbb{R}^3)}\,, \qquad  t\neq 0\,,
	\end{equation} 
	where $w(x):=1+|x|^{-1}$. The weight $w$ is needed to compensate the $|x|^{-1}$ singularity naturally emerging in $e^{\ii t \Delta_\alpha}u$ for any $t\neq 0$, as typical for a  generic element of the energy space $\widetilde{H}^s_\alpha(\mathbb{R}^3)$ -- see \eqref{eq:Ds_s1/2-3/2} above. By interpolation between \eqref{eq:p-q_weighted} and $\|e^{\ii t\Delta_\alpha}u\|_2=\|u\|_2$ one can then obtain a weighted version of \eqref{eq:p-q} in the whole regime $r\in[2,+\infty]$ (see \cite[Prop.~4]{Iandoli-Scandone-2017} or \cite[Eq.~(1.19)]{DMSY-2017}. It was a merit of \cite{Iandoli-Scandone-2017} to have  observed that as long as $r\in[2,3)$, the dispersive estimate \eqref{eq:p-q} is valid also without weights. In parallel, in \cite{DMSY-2017} the same estimate \eqref{eq:p-q} was obtained as a corollary of the much stronger result of the $L^r$-boundedness, $r\in(1,3)$, of the wave operators
	\[
	W_\alpha^{\pm}\;=\;\mathrm{strong}\;\textrm{-}\!\!\!\lim_{t\to\pm\infty}e^{-\ii t \Delta_\alpha}e^{\ii t\Delta}
	\]
	associated to the pair $(-\Delta_\alpha,-\Delta)$, and of the intertwining properties of $W^{\pm}$, which allow one to deduce \eqref{eq:p-q} in the regime $r\in[2,3)$ from the analogous and well-known dispersive estimate for the free propagator $t\mapsto e^{\ii t\Delta}$.
\end{remark}

The third class of properties that we need to recall concern fundamental tools of fractional calculus. One is the following fractional Leibniz rule by Kato and Ponce, also in the generalised version by Gulisashvili and Kon.

\begin{theorem}[Generalised fractional Leibniz rule, \cite{Kato-Ponce_CommEst-1988,Gulisashvili-Kon-1996}]\label{KatoPonce}
	Suppose that $r\in(1,+\infty)$ and $p_1,p_2,q_1,q_2\in(1,+\infty]$ with $\frac{1}{p_j}+\frac{1}{q_j}=\frac{1}{r}$, $j\in\{1,2\}$, and suppose that $s,\mu,\nu\in[0,+\infty)$. Let $d\in\mathbb{N}$, then
	\begin{equation}\label{eq:KatoPonce}
	\begin{split}
	\|\mathcal{D}^s(fg)\|_{L^r(\mathbb{R}^d)}\;&\lesssim\;\|\mathcal{D}^{s+\mu}f\|_{L^{p_1}(\mathbb{R}^d)}\|\mathcal{D}^{-\mu}g\|_{L^{q_1}(\mathbb{R}^d)} \\
	&\qquad+\|\mathcal{D}^{-\nu}f\|_{L^{p_2}(\mathbb{R}^d)}\|\mathcal{D}^{s+\nu}g\|_{L^{q_2}(\mathbb{R}^d)}\,,
	\end{split}
	\end{equation}
	where $\mathcal{D}^s=(-\Delta)^{\frac{s}{2}}$, the Riesz potential. The same result holds when $\mathcal{D}^s$ is the Bessel potential $(\mathbbm{1}-\Delta)^{\frac{s}{2}}$.
\end{theorem}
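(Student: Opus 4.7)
The plan is to prove the estimate via the Littlewood--Paley/paraproduct machinery. Fix a smooth dyadic partition $1=\sum_{j\in\mathbb{Z}}\varphi_j(\xi)$ with $\varphi_j$ supported in $\{|\xi|\sim 2^j\}$, set $P_j=\varphi_j(D)$ and $S_j=\sum_{k\leqslant j}P_k$, and use Bony's decomposition
\[
fg\;=\;\pi_{hl}(f,g)+\pi_{lh}(f,g)+\pi_{hh}(f,g)\,,
\]
where $\pi_{hl}=\sum_j(P_jf)(S_{j-N}g)$, $\pi_{lh}=\sum_j(S_{j-N}f)(P_jg)$, and $\pi_{hh}=\sum_{|j-k|\leqslant N}(P_jf)(P_kg)$, for a fixed large integer $N$. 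For $r\in(1,+\infty)$ the Littlewood--Paley square-function characterisation reduces estimating $\mathcal{D}^s(fg)$ in $L^r$ to a dyadic estimate on each of the three pieces.

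On the high-low part $\pi_{hl}$, each dyadic block has output frequency $\sim 2^j$, so $\mathcal{D}^s$ acts as a multiplier of size $2^{js}$. Writing $2^{js}=2^{j(s+\mu)}\cdot 2^{-j\mu}$ I would absorb the first factor into $P_j(\mathcal{D}^{s+\mu}f)$ and the second into $S_{j-N}(\mathcal{D}^{-\mu}g)$ (on the relevant frequency support the symbol $|\xi|^{-\mu}$ equals $2^{-j\mu}$ times a smooth bounded multiplier). Hölder with exponents $(p_1,q_1)$, combined with the Fefferman--Stein vector-valued maximal inequality used to dominate $|S_{j-N}(\mathcal{D}^{-\mu}g)|$ by the Hardy--Littlewood maximal function of $\mathcal{D}^{-\mu}g$, then produces the first term in \eqref{eq:KatoPonce}. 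The symmetric piece $\pi_{lh}$ is handled by the mirror argument with the roles of $f$ and $g$ exchanged and $\mu\leftrightarrow\nu$, producing the second term. The diagonal $\pi_{hh}$ is not automatically frequency-localised at $2^j$ after multiplication, but its $L^r$-norm is controlled via Minkowski and Cauchy--Schwarz in $j$; the same redistribution of $\mathcal{D}^s$ places it under either of the two right-hand terms of \eqref{eq:KatoPonce}.

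The main obstacle is the endpoint case in which one of $p_j,q_j$ equals $+\infty$: the $L^r$ square-function identity fails and Fefferman--Stein is no longer directly applicable, so one must substitute the uniform $L^\infty$-boundedness of the projections $P_j,S_j$ supplemented by a BMO-type estimate on the ``high frequency factor'' (this is the refinement of Grafakos--Oh--Muscalu type, which covers exactly the range of \eqref{eq:KatoPonce}). The negative orders $-\mu,-\nu$ cause no extra difficulty, since $\mathcal{D}^{-\mu}$ commutes with each $P_j$ up to the $2^{-j\mu}$ factor already absorbed. Finally, to pass from the Riesz version $\mathcal{D}^s=(-\Delta)^{s/2}$ to the Bessel version $(\mathbbm{1}-\Delta)^{s/2}$ one observes that the symbols $|\xi|^s$ and $\langle\xi\rangle^s$ agree up to a bounded smooth multiplier on each annulus with $|\xi|\sim 2^j$, $j\geqslant 1$, while the low-frequency block is handled separately by Young's inequality applied to the integrable Bessel kernel.
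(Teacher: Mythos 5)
You should first be aware that the paper contains no proof of this statement: Theorem \ref{KatoPonce} is imported from the literature, namely the classical commutator/Coifman--Meyer argument of Kato--Ponce \cite{Kato-Ponce_CommEst-1988} and the generalisation with the redistribution parameters $\mu,\nu$ due to Gulisashvili--Kon \cite{Gulisashvili-Kon-1996}. So any proof you give is ``a different route from the paper''; yours is the standard modern Littlewood--Paley/paraproduct proof (in the spirit of Grafakos--Oh and Muscalu--Schlag rather than of the two cited sources), and its skeleton is sound: the high-low and low-high paraproducts produce the two terms of \eqref{eq:KatoPonce} after writing $2^{js}=2^{j(s+\mu)}2^{-j\mu}$, and the Riesz-versus-Bessel remark at the end is handled correctly. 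What this buys over the cited proofs is transparency about where each exponent and each derivative goes; what it loses is that the genuinely hard endpoint cases must still be outsourced.

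Three points would need tightening before this is a complete argument. First, the parenthetical justifying the high-low absorption is wrong as stated: on the support of $\widehat{S_{j-N}g}$ the symbol $|\xi|^{-\mu}$ is \emph{not} $2^{-j\mu}$ times a smooth bounded multiplier (it blows up as $\xi\to0$ and is not smooth there for non-even $\mu$). The correct, and true, statement goes in the other direction: $2^{-j\mu}S_{j-N}g=m_j(D)(\mathcal{D}^{-\mu}g)$ with $m_j(\xi)=2^{-j\mu}|\xi|^{\mu}\chi(\xi/2^{j-N})$, a uniformly bounded family of symbols whose kernels are uniformly integrable dilates (decay like $|x|^{-d-\mu}$ after rescaling), which yields the pointwise domination by $M(\mathcal{D}^{-\mu}g)$; you need to argue it that way. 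Second, your geometric summation for the high-high piece requires $s>0$, while the theorem allows $s=0$; that case must be disposed of separately (it reduces to H\"older plus the same bounded multipliers, but it is not covered by the argument you wrote). Third, the endpoint exponents $p_j,q_j=+\infty$, which the statement explicitly admits, are exactly where the square-function and Fefferman--Stein tools fail; your appeal to a ``Grafakos--Oh--Muscalu type'' refinement is a citation rather than a proof (these endpoints are due to Grafakos--Oh and Bourgain--Li). Deferring them to the literature is legitimate --- it is what the paper itself does for the whole theorem --- but it should be flagged as such rather than presented as a step of the argument.
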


\begin{remark}
	As a direct consequence of Mihlin multiplier theorem \cite[Section 6.1]{Bergh-Lofsrom_InterpolationSpaces1976}, the estimate \eqref{eq:KatoPonce} holds as well for $\mathcal{D}^s=(-\Delta+\lambda\mathbbm{1})^{\frac{s}{2}}$ for any $\lambda\geqslant 0$.
\end{remark}

We also need a more versatile re-distribution of the derivatives among the two factors $f$ and $g$ in \eqref{eq:KatoPonce}: the following recent result by Fujiwara, Georgiev, and Ozawa provides a very useful refinement of the fractional Leibniz rule and is based on a careful treatment of the correction term 
\begin{equation}
[f,g]_{s}\;:=\;f\,\mathcal{D}^sg+g\,\mathcal{D}^sf\,.
\end{equation}

\begin{theorem}[Higher order fractional Leibniz rule, \cite{Fujiwara-Georgiev-Ozawa-2016}]\label{KatoPonce_Vladimir}
	Suppose that $p,q,r\in(1,+\infty)$ with $\frac{1}{p}+\frac{1}{q}=\frac{1}{r}$ and let $d\in\mathbb{N}$.
	\begin{itemize}
		\item[(i)] Let $s_1,s_2\in[0,1]$ and set $s:=s_1+s_2$. Then
		\begin{equation}\label{eq:KatoPonce_Vladimir}
		\|\mathcal{D}^s(fg)-[f,g]_{s}\|_{L^r(\mathbb{R}^d)}\;\lesssim\;\|\mathcal{D}^{s_1}f\|_{L^{p}(\mathbb{R}^d)}\|\mathcal{D}^{s_2}g\|_{L^{q}(\mathbb{R}^d)}\,.
		\end{equation}
		\item[(ii)] Let $s_1\in[0,2]$, $s_2\in[0,1]$ be such that $s:=s_1+s_2\geqslant 1$. Then
		\begin{eqnarray}
		& & \|\mathcal{D}^s(fg)-[f,g]_{s}+s\,\mathcal{D}^{s-2}(\nabla f\cdot\nabla g)+s\mathcal{D}^{s-2}(g\Delta f)-sg\mathcal{D}^{s-2}\Delta f\|_{L^r(\mathbb{R}^d)} \nonumber \\
		& & \qquad\qquad\qquad\lesssim\;\|\mathcal{D}^{s_1}f\|_{L^{p}(\mathbb{R}^d)}\|\mathcal{D}^{s_2}g\|_{L^{q}(\mathbb{R}^d)}\,. \label{eq:KatoPonce_Vladimir_tre}
		\end{eqnarray}
		Moreover, since 
		\[
		\mathcal{D}^{s-2}(\nabla f\cdot\nabla g)+\mathcal{D}^{s-2}(g\Delta f)-g\mathcal{D}^{s-2}\Delta f=\mathcal{D}^{s-2}\nabla\cdot(g\nabla f)+g\mathcal{D}^sf
		\]
		we can rewrite \eqref{eq:KatoPonce_Vladimir_tre} in the more compact form
		\begin{equation}\label{eq:KatoPonce_Vladimir_quattro}
		\begin{split}
		\|\mathcal{D}^s(fg)-f\mathcal{D}^s&g+(s-1)g\mathcal{D}^sf+s\,\mathcal{D}^{s-2}\nabla\cdot(g\nabla f)\|_{L^r(\mathbb{R}^d)}\\
		&\lesssim\;\|\mathcal{D}^{s_1}f\|_{L^{p}(\mathbb{R}^d)}\|\mathcal{D}^{s_2}g\|_{L^{q}(\mathbb{R}^d)}\,.
		\end{split}
		\end{equation}
	\end{itemize}
\end{theorem}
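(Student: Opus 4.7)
The approach is Fourier-analytic: I would read both inequalities as bilinear Fourier multiplier bounds. Writing $\widehat{fg}=\widehat f*\widehat g$ and introducing $\zeta:=\xi-\eta$ as the frequency of $f$ against the frequency $\eta$ of $g$, the operator on the left-hand side of \eqref{eq:KatoPonce_Vladimir} is the bilinear Fourier multiplier associated with the symbol
\begin{equation*}
m_s(\zeta,\eta)\;:=\;|\zeta+\eta|^s-|\zeta|^s-|\eta|^s,
\end{equation*}
while \eqref{eq:KatoPonce_Vladimir_quattro} corresponds to the bilinear multiplier with refined symbol $\widetilde m_s(\zeta,\eta)$ obtained by subtracting from $m_s$ the Fourier symbols of the correction terms. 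A direct calculation gives $\widetilde m_2\equiv 0$, which is exactly the classical Leibniz identity $\Delta(fg)=f\Delta g+g\Delta f+2\nabla f\cdot\nabla g$ transcribed on the frequency side: a sanity check confirming that the correction in (ii) is designed to cancel the leading two orders of the Taylor expansion of $|\zeta+\eta|^s$.

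The heart of the proof is a pair of pointwise symbol estimates
\begin{equation*}
|m_s(\zeta,\eta)|\;\lesssim\;|\zeta|^{s_1}|\eta|^{s_2},\qquad|\widetilde m_s(\zeta,\eta)|\;\lesssim\;|\zeta|^{s_1}|\eta|^{s_2},
\end{equation*}
valid for the parameter ranges stated in (i) and (ii) respectively, together with the corresponding estimates on their derivatives. I would split $f$ and $g$ via standard Littlewood-Paley projectors and isolate three paraproduct regimes: the two asymmetric cases $|\zeta|\gg|\eta|$ and $|\eta|\gg|\zeta|$, and the resonant case $|\zeta|\sim|\eta|$. On the asymmetric regimes, $|\zeta+\eta|^s$ is smooth in the small variable, and a first-order Taylor expansion around the large variable yields the bound for $m_s$ with $s_1,s_2\in[0,1]$; pushing the expansion to second order, and using that $\widetilde m_s$ annihilates by construction the zeroth- and first-order terms, yields the bound for $\widetilde m_s$ with the relaxed range $s_1\in[0,2]$. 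This is also where the threshold $s\geq 1$ in (ii) becomes essential: otherwise $\mathcal{D}^{s-2}$ is too singular for the corrective terms to make sense distributionally, and the cancellation scheme collapses. On the resonant regime, both frequencies have comparable size and the bound follows directly from $|m_s|\lesssim(|\zeta||\eta|)^{s/2}$ and its analogue.

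Once these symbol estimates and their natural derivative counterparts are established, I would conclude via the Coifman--Meyer bilinear multiplier theorem. Indeed, dividing $m_s$ (resp.\ $\widetilde m_s$) by $|\zeta|^{s_1}|\eta|^{s_2}$ produces a Coifman--Meyer-type symbol on each paraproduct region, which guarantees boundedness $L^{p_1}\times L^{p_2}\to L^r$ under H\"older's relation $\frac{1}{p}+\frac{1}{q}=\frac{1}{r}$ with $p,q,r\in(1,+\infty)$; applied to the pair $(\mathcal{D}^{s_1}f,\mathcal{D}^{s_2}g)$, this delivers the desired \eqref{eq:KatoPonce_Vladimir} and \eqref{eq:KatoPonce_Vladimir_quattro}. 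I expect the main obstacle to be the resonant regime, where $|z|^s$ may be evaluated near $z=0$ (when $\zeta+\eta$ is small) and the Taylor expansion breaks down; there one needs an almost-orthogonality argument, summing the Littlewood--Paley pieces in $L^2$ and upgrading to general $L^r$ via Bernstein's inequality, with a careful bookkeeping of the cancellations built into the corrective terms of (ii).
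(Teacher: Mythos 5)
A preliminary remark on the comparison itself: the paper does not prove Theorem \ref{KatoPonce_Vladimir} at all — it is imported as preparatory material from \cite{Fujiwara-Georgiev-Ozawa-2016} — so there is no internal proof to measure your argument against; the relevant benchmark is the cited work, whose proof is likewise Fourier-analytic and runs through Coifman--Meyer-type multiplier bounds. Your overall plan (pass to the bilinear symbols $m_s(\zeta,\eta)=|\zeta+\eta|^s-|\zeta|^s-|\eta|^s$ and its refinement $\widetilde m_s$, observe that the corrections in (ii) cancel the zeroth- and first-order Taylor terms, check $\widetilde m_2\equiv 0$, then prove pointwise bounds $\lesssim|\zeta|^{s_1}|\eta|^{s_2}$ on paraproduct regions) is therefore the right route, and your pointwise symbol estimates are indeed correct in the stated ranges.

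Two places need repair. First, your explanation of the threshold $s\geqslant 1$ is not the right one: $\mathcal{D}^{s-2}$ is a Riesz potential of order $2-s<2$ and makes perfectly good sense on the relevant functions (certainly for $d\geqslant 2$), so nothing ``collapses distributionally'' for $s<1$. The real role of $s\geqslant1$ shows up in your own symbol bound, in the resonant/low-output region: with $|\zeta|\sim|\eta|\sim1$ and $\zeta+\eta\to0$, the correction contributes $s\,|\zeta+\eta|^{s-2}(\zeta+\eta)\cdot\zeta\sim|\zeta+\eta|^{s-1}$, which is controlled by $|\zeta|^{s_1}|\eta|^{s_2}\sim1$ precisely when $s\geqslant1$ and diverges otherwise; this is where the hypothesis enters and where your sketch should record it. Second, the concluding step ``dividing by $|\zeta|^{s_1}|\eta|^{s_2}$ produces a Coifman--Meyer symbol'' is not literally true: for non-integer $s_1,s_2$ the quotient violates the Coifman--Meyer derivative bounds $|\partial^\alpha\sigma|\lesssim(|\zeta|+|\eta|)^{-|\alpha|}$ near the subspaces $\{\zeta=0\}$ and $\{\eta=0\}$ (derivatives of $|\zeta|^{-s_1}$ blow up there), and the non-smoothness of $|\zeta+\eta|^s$ at $\zeta+\eta=0$ is also outside the plain CM framework. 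Pointwise bounds on $m_s,\widetilde m_s$ alone never give $L^p\times L^q\to L^r$ boundedness; the substance of the theorem is exactly the verification of a workable multiplier condition region by region (or an expansion of the localized symbol into rapidly converging tensor products on each paraproduct piece), together with the low-output-frequency summation you only gesture at with ``almost orthogonality plus Bernstein''. So the strategy is sound and consistent with \eqref{eq:KatoPonce_Vladimir} and \eqref{eq:KatoPonce_Vladimir_quattro} as proved in the reference, but as written these two steps — the justification of $s\geqslant1$ and the multiplier-theorem reduction — are where the actual proof still has to be carried out.
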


For the fractional derivative of $|x|^{-1}e^{-\lambda|x|}$ we need, additionally, a point-wise estimate.

\begin{lemma}\label{lederi}
	Let $\lambda>0$, $s\in (0,2]$. We have the estimate
	\begin{equation}\label{deripower}
	\Big|\mathcal{D}^s\,\frac{e^{-\lambda|x|}}{|x|}\Big|\;\lesssim \;\frac{e^{-\lambda|x|}}{|x|}+\frac{e^{-\lambda|x|}}{|x|^{1+s}}\,,\qquad x\neq 0\,.
	\end{equation}
\end{lemma}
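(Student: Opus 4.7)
The plan is to pass to the Fourier side and identify $\mathcal{D}^s(e^{-\lambda|x|}/|x|)$, up to a positive constant, with an explicit Bessel-type kernel in $\mathbb{R}^3$, whose pointwise asymptotics are classical. By the dilation $x \mapsto \lambda x$ the inequality reduces to the case $\lambda = 1$; write $g(x) := e^{-|x|}/|x|$ and recall the explicit 3D Fourier transform $\widehat g(\xi) = 4\pi(1+|\xi|^2)^{-1}$. The endpoint $s = 2$ is immediate from $(-\Delta + 1) g = 4\pi\delta_0$ distributionally, which gives $|\mathcal{D}^2 g(x)| \le C\, e^{-|x|}/|x|$ off the origin, already of the form of the RHS of \eqref{deripower}. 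For $s \in (0, 2)$, $\widehat{\mathcal{D}^s g}(\xi) = 4\pi\,m(\xi)/(1+|\xi|^2)$ with $m$ the Fourier multiplier of $\mathcal{D}^s$; in the setting in which \eqref{deripower} holds --- namely $\mathcal{D}^s = (\mathbbm{1}-\Delta)^{s/2}$ or $(-\Delta+\lambda^2)^{s/2}$ with shift matched to the exponential rate --- one has $m(\xi) = (1+|\xi|^2)^{s/2}$, so the symbol simplifies to $4\pi(1+|\xi|^2)^{(s-2)/2}$. Hence $\mathcal{D}^s g$ is a positive constant multiple of the classical Bessel kernel $\mathcal{K}_{2-s}$ of order $2-s$ in dimension 3.

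Next, from the heat-kernel representation
\[
\mathcal{K}_{2-s}(x) \;=\; \frac{1}{(4\pi)^{(2-s)/2}\Gamma(\tfrac{2-s}{2})}\int_0^\infty t^{-(s+3)/2}\, e^{-t - |x|^2/(4t)}\,\ud t,
\]
Laplace's method extracts the near-origin asymptotic $\mathcal{K}_{2-s}(x) \sim c_1\,|x|^{-(1+s)}$ (from the regime $t \sim |x|^2$, where the factor $e^{-|x|^2/(4t)}$ controls convergence at $t = 0^+$) and the far-field asymptotic $\mathcal{K}_{2-s}(x) \sim c_2\,|x|^{-(s+2)/2}\,e^{-|x|}$ (from the saddle point of the exponent $-t - |x|^2/(4t)$ at $t = |x|/2$, where its value is precisely $-|x|$). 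Both sit inside the RHS of \eqref{deripower}, and continuity of $\mathcal{K}_{2-s}$ on $\mathbb{R}^3 \setminus \{0\}$ fills the intermediate regime. For the Riesz variant $\mathcal{D}^s = (-\Delta)^{s/2}$ one proceeds instead by the algebraic splitting
\[
\frac{|\xi|^s}{1+|\xi|^2} \;=\; |\xi|^{s-2} \;-\; \frac{|\xi|^{s-2}}{1+|\xi|^2},
\]
expressing $\mathcal{D}^s g$ as an explicit Riesz-potential term $c\,|x|^{-(1+s)}$ plus a convolution of a Riesz kernel with $g$, the latter estimated by the standard short-range/long-range splitting at scale $|x|/2$ that exploits the exponential decay of $g$ in the long-range region.

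The main obstacle is the extraction of the exponential factor $e^{-|x|}$ in the tail: it is encoded in the analyticity of the Fourier symbol in a strip around the real axis, and is materialised through the Laplace saddle-point analysis of the heat integral. The algebraic corrections to the saddle-point approximation at large $|x|$ are subdominant to the $|x|^{-1}$ factor on the RHS, so the pointwise bound closes uniformly in $|x|$.
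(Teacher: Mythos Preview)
The paper's proof is a single Fourier-side identity, $\frac{|p|^s}{p^2+\lambda}=-\frac{\lambda}{p^2+\lambda}+\frac{|p|^s+\lambda}{p^2+\lambda}$, with the assertion that the bound follows ``straightforwardly''; no further detail is supplied. Your route for the Bessel/shifted case---recognising $(\mathbbm{1}-\Delta)^{s/2}g$ as a constant multiple of the Bessel kernel $\mathcal{K}_{2-s}$ and reading off its classical local and far-field asymptotics from the heat-kernel representation---is different from the paper's, correct, and considerably more informative; it genuinely delivers the exponential factor on the right-hand side.

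Your Riesz-case paragraph, however, has a real gap. In the splitting $\frac{|\xi|^s}{1+|\xi|^2}=|\xi|^{s-2}-\frac{|\xi|^{s-2}}{1+|\xi|^2}$ the first piece contributes $c\,|x|^{-(1+s)}$, which is \emph{not} dominated by $e^{-|x|}/|x|+e^{-|x|}/|x|^{1+s}$ for large $|x|$; the convolution piece carries the same leading $|x|^{-(1+s)}$ tail, and only after cancellation between the two does one recover the true decay of $(-\Delta)^{s/2}g$, which is $\sim|x|^{-(3+s)}$---still polynomial, not exponential. This is not repairable: the Riesz potential does not preserve exponential decay, because the symbol $|p|^s$ is non-analytic at $p=0$. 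Thus for $\mathcal{D}^s=(-\Delta)^{s/2}$ the exponential weight in \eqref{deripower} is simply too strong at infinity (the paper's own second term $\mathcal{F}^{-1}[(|p|^s+\lambda)/(p^2+\lambda)]$ has the same $|x|^{-(3+s)}$ tail, so its one-line proof is no better on this point). You were right to flag, in your parenthetical remark, that the exponential bound holds as stated only in the shifted/Bessel setting. For the applications in Propositions~\ref{tst} and~\ref{tstdue} none of this matters: the $L^{q_2}$- and weighted $L^2$-norms computed there are finite already with the local blow-up $|x|^{-(1+s)}$ near the origin together with the polynomial decay $|x|^{-(3+s)}$ at infinity.
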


\begin{proof}
	Obvious when $s=2$, and a straightforward consequence of the identity
	\[
	\begin{split}
	\mathcal{D}^sG_\lambda\;&=\;\mathcal{F}^{-1}(|p|^s\widehat{G}_\lambda(p)) \;=\;-\mathcal{F}^{-1}\!\Big(\frac{1}{\;(2\pi)^{\frac{3}{2}}}\,\frac{\lambda}{p^2+\lambda} \Big)+\mathcal{F}^{-1}\!\Big(\frac{1}{\;(2\pi)^{\frac{3}{2}}}\,\frac{|p|^s+\lambda}{p^2+\lambda} \Big)
	\end{split}
	\]
	when $s\in(0,2)$.
\end{proof}

In the second part of this Section, based on the preceding properties, we derive two useful estimates that we are going to apply systematically in our discussion when $s>\frac{1}{2}$. Let us recall that in this case 
\begin{equation}\label{eq:algebra}
\|g_1 g_2\|_{W^{s,q}}\;\lesssim\;\|g_1\|_{W^{s,q}}\,\|g_2\|_{W^{s,q}}\qquad \qquad\quad (s>\textstyle{\frac{1}{2}},\;q\in(6,+\infty))\,,
\end{equation}
as follows, for example, from the fractional Leibniz rule \eqref{eq:KatoPonce} and Sobolev's embedding $W^{s,q}(\mathbb{R}^3)\hookrightarrow L^\infty(\mathbb{R}^3)$.

We start with the estimate for the regime $s\in(\frac{1}{2},\frac{3}{2})$, for which we recall Sobolev's embedding
\begin{equation}\label{eq:SobEmb-I}
\begin{split}
W^{s,q}(\R^3)\;&\hookrightarrow\; \mathcal{C}^{0,\vartheta(s,q)}(\R^3) \\
\vartheta(s,q)\;&:=\;\textstyle\min\{s-\frac{3}{q},1\}\qquad\qquad\qquad (s\in(\frac{1}{2},\frac{3}{2}),\;q\in(6,+\infty)\,.
\end{split}
\end{equation}

\begin{proposition}\label{tst}
	Let $s\in(\frac12,\frac32)$ and $h\in W^{s,q}(\R^3)$ for some $q\in(6,+\infty)$. Then
	\begin{equation}\label{tecre}
	\big\|\mathcal{D}^s\big((h-h(0))\,G_\lambda\big)\big\|_{L^2}\;\lesssim \;\|h\|_{W^{s,q}(\R^3)}\,,
	\end{equation}
	where $G_\lambda$ is the function \eqref{eq:defGlambda} for some $\lambda>0$.
\end{proposition}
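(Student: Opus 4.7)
The plan is to apply the higher-order fractional Leibniz rule of Theorem \ref{KatoPonce_Vladimir}(i) to $fg$ with $f:=h-h(0)$ and $g:=G_\lambda$. Since the Riesz potential $\mathcal{D}^s$ annihilates constants,
\begin{equation*}
\mathcal{D}^s\bigl((h - h(0))\, G_\lambda\bigr)\;=\;(h - h(0))\,\mathcal{D}^s G_\lambda\;+\;G_\lambda\,\mathcal{D}^s h\;+\;R,
\end{equation*}
where $\|R\|_{L^2}\lesssim\|\mathcal{D}^{s_1}h\|_{L^p}\,\|\mathcal{D}^{s_2}G_\lambda\|_{L^{p'}}$ for any splitting $s_1+s_2=s$ with $s_1,s_2\in[0,1]$ and conjugate exponents $\tfrac{1}{p}+\tfrac{1}{p'}=\tfrac{1}{2}$. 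The estimate \eqref{tecre} then reduces to controlling each of these three pieces by $\|h\|_{W^{s,q}}$.

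\textbf{The core estimate.} The delicate piece is $(h-h(0))\,\mathcal{D}^s G_\lambda$. By Lemma \ref{lederi}, the leading singularity of $\mathcal{D}^s G_\lambda$ near the origin is $|x|^{-(1+s)}$, which fails to be locally $L^2$ when $s>\tfrac12$; only the vanishing of $h-h(0)$ at $x=0$ can save square-integrability. The rescue is the Sobolev--H\"older embedding \eqref{eq:SobEmb-I}: for $s>\tfrac12$ and $q>6$ one has $h\in\mathcal{C}^{0,\vartheta}(\R^3)$ with $\vartheta:=\min\{s-3/q,1\}$, and crucially $\vartheta>s-\tfrac12$ since $3/q<\tfrac12$. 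Combining the local H\"older bound $|h(x)-h(0)|\lesssim\|h\|_{W^{s,q}}|x|^{\vartheta}$ with $\|h\|_{L^\infty}\lesssim\|h\|_{W^{s,q}}$ and the exponential decay of $G_\lambda$ yields
\begin{equation*}
\bigl\|(h-h(0))\,\mathcal{D}^s G_\lambda\bigr\|_{L^2}^{2}\;\lesssim\;\|h\|_{W^{s,q}}^{2}\!\int_{|x|<1}\!|x|^{2\vartheta-2-2s}\,\ud x\;+\;C\,\|h\|_{W^{s,q}}^{2}\,,
\end{equation*}
and in spherical coordinates the first integral reduces to $\int_0^1 r^{2\vartheta-2s}\,\ud r$, which converges precisely because $\vartheta>s-\tfrac12$, i.e.\ because $q>6$.

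\textbf{Other pieces, and main obstacle.} For $G_\lambda\,\mathcal{D}^s h$, H\"older's inequality with $\tfrac{1}{a}+\tfrac{1}{q}=\tfrac{1}{2}$ gives $a=\tfrac{2q}{q-2}\in(2,3)$ when $q>6$, so $G_\lambda\in L^a(\R^3)$ and $\|G_\lambda\,\mathcal{D}^s h\|_{L^2}\leqslant\|G_\lambda\|_{L^a}\|\mathcal{D}^s h\|_{L^q}\lesssim\|h\|_{W^{s,q}}$. For the remainder $R$, if $s\in(\tfrac12,1]$ take $(s_1,s_2)=(s,0)$ and $(p,p')=(q,a)$; if $s\in(1,\tfrac32)$ choose $s_2\in[s-1,\tfrac12)$ (a non-empty interval because $s<\tfrac32$), $s_1=s-s_2\in(\tfrac12,1]$, and $\tfrac{1}{p}=\tfrac{1}{q}-\tfrac{s_2}{3}$ at the critical Sobolev exponent, so that $W^{s,q}\hookrightarrow W^{s_1,p}$ gives $\|\mathcal{D}^{s_1}h\|_{L^p}\lesssim\|h\|_{W^{s,q}}$, while Lemma \ref{lederi} gives $\mathcal{D}^{s_2}G_\lambda\in L^{p'}$; the compatibility $(1+s_2)\,p'<3$ with $\tfrac{1}{p'}=\tfrac{1}{2}-\tfrac{1}{q}+\tfrac{s_2}{3}$ again boils down to $q>6$. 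The main obstacle is thus the core estimate: the hypothesis $q>6$ is sharp, being exactly the threshold that pushes the H\"older exponent of $h$ past $s-\tfrac12$ and so beats the singularity of $\mathcal{D}^s G_\lambda$, and it recurs as the matching consistency condition for the remainder.
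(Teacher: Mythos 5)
Your argument is correct and is essentially the paper's own proof: both apply the commutator estimate \eqref{eq:KatoPonce_Vladimir} to isolate the two terms $(h-h(0))\,\mathcal{D}^sG_\lambda$ and $G_\lambda\,\mathcal{D}^sh$ plus a remainder, bound the first via Lemma \ref{lederi} combined with the H\"older embedding \eqref{eq:SobEmb-I} and the threshold $\vartheta(s,q)>s-\frac{1}{2}$ (exactly the role of $q>6$), the second by H\"older with $G_\lambda\in L^{2q/(q-2)}$, and the remainder by the same case split $(s_1,s_2)=(s,0)$ for $s\leqslant 1$ and a Sobolev embedding for $s>1$. The only differences are cosmetic: the paper first writes $G_\lambda=e^{-\frac{1}{2}|x|}\widetilde{G}$ and attaches the exponential to $h-h(0)$ before invoking the commutator bound, whereas you exploit that $\mathcal{D}^s$ annihilates constants; moreover your prescription $\frac{1}{p}=\frac{1}{q}-\frac{s_2}{3}$ carries the same implicit restriction $s_2<\frac{3}{q}$ as the paper's choice $q_1=(\frac{1}{q}-\frac{s-1}{3})^{-1}$, which is harmless in both proofs since a subcritical exponent $p\in[q,\infty)$ serves equally well and the final condition still reduces to $q>6$.
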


\begin{proof}
	It is not restrictive to fix $\lambda=1$. For short, we set $G(x):=|x|^{-1}e^{-|x|}$ and $\widetilde{G}(x):=|x|^{-1}e^{-\frac{1}{2}|x|}$.

	By means of the commutator bound \eqref{eq:KatoPonce_Vladimir} we find
	\begin{equation*}\tag{i}
	\begin{split}
	\big\|\mathcal{D}^s\big((h&-h(0))\,G\big)\big\|_{L^2}\;=\;\big\|\mathcal{D}^s\big(e^{-\frac{1}{2}|x|}(h-h(0))\,\widetilde{G}\big)\big\|_{L^2} \\
	&\leqslant\; \big\|\mathcal{D}^s\big(e^{-\frac{1}{2}|x|}(h-h(0))\,\widetilde{G}\big) - \big[e^{-\frac{1}{2}|x|}(h-h(0)),\widetilde{G} \big]_s\big\|_{L^2}  \\
	&\qquad +\big\|e^{-\frac{1}{2}|x|}(h-h(0))\,\mathcal{D}^s\widetilde{G}\big\|_{L^2}+\big\|\widetilde{G}\,\mathcal{D}^s\big(e^{-\frac{1}{2}|x|}(h-h(0))\big)\big\|_{L^2}\\
	&\lesssim\; \big\|\mathcal{D}^{s_1}\big(e^{-\frac{1}{2}|x|}(h-h(0))\big)\big\|_{L^{q_1}}\|\mathcal{D}^{s_2}\widetilde{G}\|_{L^{q_2}}\\
	&\qquad +\big\|e^{-\frac{1}{2}|x|}(h-h(0))\,\mathcal{D}^s\widetilde{G}\big\|_{L^2}+\big\|\widetilde{G}\,\mathcal{D}^s\big(e^{-\frac{1}{2}|x|}(h-h(0))\big)\big\|_{L^2}\\
	&\equiv\;\mathcal{R}_1+\mathcal{R}_2+\mathcal{R}_3
	\end{split}
	\end{equation*}
	for every $s_1,s_2\in[0,1]$ with $s_1+s_2=s$ and every $q_1,q_2\in[2,+\infty]$ such that $q_1^{-1}+q_2^{-1}=2^{-1}$.

	Let us estimate the term $\mathcal{R}_3$. Since, by \eqref{eq:algebra} and Sobolev's embedding,
	\begin{equation*}\tag{ii}
	\begin{split}
	\big\|\mathcal{D}^s\big(e^{-\frac{1}{2}|x|}(h-h(0))\big)\big\|_{L^q}\;&\lesssim\;\big\|(\mathbbm{1}-\Delta)^{\frac{s}{2}}\big(e^{-\frac{1}{2}|x|}(h-h(0))\big)\big\|_{L^q} \\
	&\leqslant\;\big\|e^{-\frac{1}{2}|x|}h\big\|_{W^{s,q}}+|h(0)|\;\|e^{-\frac{1}{2}|x|}\|_{W^{s,q}} \\
	&\lesssim\;\|h\|_{W^{s,q}}+\|h\|_{L^{\infty}}\;\lesssim\; \|h\|_{W^{s,q}}\,,                                                                                                                                                                    
	\end{split}
	\end{equation*}
	and since $\|\widetilde{G}\|_{\frac{2q}{q-2}}<+\infty$ because $\frac{2q}{q-2}<3$ for $q>6$, then Holder's inequality yields
	\begin{equation*}\tag{iii}
	\mathcal{R}_3\;\leqslant\; \big\|\mathcal{D}^s\big(e^{-\frac{1}{2}|x|}(h-h(0))\big)\big\|_{L^q}\;\big\|\widetilde{G}\big\|_{L^{\frac{2q}{q-2}}}\;\lesssim\;\|h\|_{W^{s,q}}\,.
	\end{equation*}

	Next, let us estimate $\mathcal{R}_1$. When $s\in(\frac{1}{2},1]$, we choose $s_1=s$, $s_2=0$, $q_1=q$, and $q_2=\frac{2q}{q-2}$ and we proceed exactly as for $\mathcal{R}_3$. When instead $s\in(1,\frac32)$, we choose $s_1=1$, $s_2=s-1\in(0,\frac 12)$, $q_1=(\frac{1}{q}-\frac{s-1}{3})^{-1}$, and $q_2=(\frac{1}{2}-\frac{1}{q_1})^{-1}$. Then Sobolev's embedding $W^{s,q}(\mathbb{R}^3)\hookrightarrow W^{1,q_1}(\mathbb{R}^3)$ and estimate (ii) above imply
	$$\Big\|\mathcal{D}^{s_1}\big(e^{-\frac{1}{2}|x|}(h-h(0))\big)\Big\|_{L^{q_1}}\;\lesssim\; \Big\|(\mathbbm{1}-\Delta)^{\frac{s}{2}}\big(e^{-\frac{1}{2}|x|}(h-h(0))\big)\Big\|_{L^{q}}\;\lesssim\; \|h\|_{W^{s,q}}\,,$$
	whereas estimate \eqref{deripower} and the fact that $q_2<sq_2<3$ for $s\in(1,\frac32)$ imply
	\[
	\|\mathcal{D}^{s_2}\widetilde{G}\|_{L^{q_2}}\;\lesssim\;\big\|e^{-\frac{1}{2}|x|}\big({\textstyle\frac{1}{|x|}+\frac{1}{|x|^s}}\big) \big\|_{L^{q_2}}\;<\;+\infty\,.
	\]
	Thus, in either case $s\in\big(\frac12,1\big]$ and $s\in\big(1,\frac32\big)$,
	\begin{equation*}\tag{iv}
	\mathcal{R}_1\;\lesssim\;\|h\|_{W^{s,q}}\,.
	\end{equation*}

	Last, let us estimate $\mathcal{R}_2$. Because of the embedding \eqref{eq:SobEmb-I},
	\[
	\Big\|\frac{e^{-\frac{1}{2}|x|}(h-h(0))}{|x|^{\vartheta(s,q)}}\Big\|_{L^{\infty}}\;\lesssim\;\|h\|_{W^{s,q}}\,.
	\]
	Moreover, since $\vartheta(s,q)>s-\frac12$ and hence $2(1-\vartheta(s,q))<2(1+s-\vartheta(s,q))<3$ for every $s\in\big(\frac12,\frac32\big)$, estimate \eqref{deripower} implies
	$$\big\||x|^{\vartheta(s,q)}\mathcal{D}^s\widetilde{G}\big\|_{L^2}\;\lesssim\;\big\|e^{-\frac{1}{2}|x|}\big(\,|x|^{-(1-\vartheta(s,q))}+|x|^{-(1+s-\vartheta(s,q))}\big) \big\|_{L^2}
	\;<\;+\infty\,.$$
	Thus,
	\begin{equation*}\tag{v}
	\mathcal{R}_2\;\leqslant\;\Big\|\frac{e^{-\frac{1}{2}|x|}(h-h(0))}{|x|^{\vartheta(s,q)}}\Big\|_{L^{\infty}}\,\big\||x|^{\vartheta(s,q)}\mathcal{D}^s\widetilde{G}\big\|_{L^2}\;\lesssim\;\|h\|_{W^{s,q}}\,.
	\end{equation*}
	Plugging (iii), (iv), and (v) into (i) the thesis follows.
\end{proof}

We establish now an analogous estimate for the regime $s\in(\frac32,2]$, for which we recall Sobolev's embedding
\begin{equation}\label{eq:SobEmb-II}
\begin{split}
W^{s,q}(\R^3)\;&\hookrightarrow\; \mathcal{C}^{1,\vartheta(s,q)}(\R^3) \\
\vartheta(s,q)\;&:=\;\textstyle s-1-\frac3q\qquad\quad (s\in(\frac{3}{2},2],\;q\in(6,+\infty))\,.
\end{split}
\end{equation}

\begin{proposition}\label{tstdue}
	Let $s\in(\frac32,2]$ and $h\in W^{s,q}(\R^3)$ for some $q\in(6,+\infty)$. 
	Assume further that $h$ is spherically symmetric and that $(\nabla h)(0)=0$.
	Then
	\begin{equation}\label{tecredue}
	\big\|\mathcal{D}^s\big((h-h(0))\,G_\lambda\big)\big\|_{L^2}\;\lesssim \;\|h\|_{W^{s,q}(\R^3)}\,,
	\end{equation}
	where $G_\lambda$ is the function \eqref{eq:defGlambda} for some $\lambda>0$.
\end{proposition}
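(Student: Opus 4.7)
The plan is to mirror the strategy of Proposition \ref{tst}, but since now $s>1$ I would employ the \emph{second-order} form of the fractional Leibniz rule, namely identity \eqref{eq:KatoPonce_Vladimir_quattro} of Theorem \ref{KatoPonce_Vladimir}(ii), in place of the first-order commutator bound used there. The extra hypotheses $(\nabla h)(0)=0$ and spherical symmetry enter precisely to tame the genuinely new divergence correction $\mathcal{D}^{s-2}\nabla\!\cdot\!(g\nabla f)$ produced by this higher-order rule.

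Setting $\lambda=1$ without loss of generality, I would write $\widetilde{G}(x):=e^{-|x|/2}/|x|$ and decompose $(h-h(0))G_1=fg$ with $f:=e^{-|x|/2}(h-h(0))$ and $g:=\widetilde{G}$. The Sobolev embedding \eqref{eq:SobEmb-II} gives $h\in\mathcal{C}^{1,\vartheta}(\mathbb{R}^3)$ with $\vartheta=s-1-\tfrac{3}{q}\in(\tfrac{1}{2},1]$ for $q>6$; combined with $(\nabla h)(0)=0$ and integration of $\nabla h$ along radial segments, this yields the refined vanishing
$$
|h(x)-h(0)|\;\lesssim\;\|h\|_{W^{s,q}}\,|x|^{1+\vartheta}\,,\qquad |\nabla h(x)|\;\lesssim\;\|h\|_{W^{s,q}}\,|x|^{\vartheta}\,.
$$
Applying \eqref{eq:KatoPonce_Vladimir_quattro} with $s_1=s\in[0,2]$, $s_2=0\in[0,1]$, and the H\"older pair $\big(q,\tfrac{2q}{q-2}\big)$---admissible because $q>6$ forces $\tfrac{2q}{q-2}<3$, so $\widetilde{G}\in L^{2q/(q-2)}$---reduces the problem to controlling the four $L^2$ quantities
$$
\mathcal{T}_{\mathrm{rem}}\,,\quad \mathcal{T}_1:=\|f\,\mathcal{D}^sg\|_{L^2}\,,\quad \mathcal{T}_2:=\|g\,\mathcal{D}^sf\|_{L^2}\,,\quad \mathcal{T}_3:=\|\mathcal{D}^{s-2}\nabla\!\cdot\!(g\nabla f)\|_{L^2}\,,
$$
where $\mathcal{T}_{\mathrm{rem}}$ denotes the $L^2$-norm of the LHS of \eqref{eq:KatoPonce_Vladimir_quattro}. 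The bound $\mathcal{T}_{\mathrm{rem}}\lesssim\|\mathcal{D}^sf\|_{L^q}\|g\|_{L^{2q/(q-2)}}\lesssim\|h\|_{W^{s,q}}$ follows from the algebra property \eqref{eq:algebra} applied to $f$, together with $|h(0)|\leq\|h\|_{L^\infty}\lesssim\|h\|_{W^{s,q}}$; $\mathcal{T}_2$ is handled exactly as in the analogous step of Proposition \ref{tst}; and for $\mathcal{T}_1$, Lemma \ref{lederi} combined with $|f|\lesssim\|h\|_{W^{s,q}}|x|^{1+\vartheta}$ produces an integrand dominated near the origin by $\|h\|_{W^{s,q}}^2\,e^{-|x|}|x|^{2(\vartheta-s)}$, which is $L^1_{\mathrm{loc}}$ since $s-\vartheta=1+\tfrac{3}{q}<\tfrac{3}{2}$ for $q>6$.

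The genuine obstacle is $\mathcal{T}_3$, which I would dispose of via the Plancherel bound
$$
\|\mathcal{D}^{s-2}\nabla\!\cdot\!\vec{v}\|_{L^2}\;\leq\;\|\mathcal{D}^{s-1}\vec{v}\|_{L^2}\,,
$$
valid since $|\xi\cdot\widehat{\vec{v}}(\xi)|\leq|\xi||\widehat{\vec{v}}(\xi)|$, thereby replacing $\mathcal{T}_3$ by $\|\mathcal{D}^{s-1}(g\nabla f)\|_{L^2}$ with $s-1\in(\tfrac{1}{2},1]$. To this quantity I would apply the first-order commutator estimate \eqref{eq:KatoPonce_Vladimir} of Theorem \ref{KatoPonce_Vladimir}(i) to the product $g\cdot\nabla f$ with splitting $s'_1=0$, $s'_2=s-1$; this produces a Leibniz remainder plus the two pieces $\|g\,\mathcal{D}^{s-1}\nabla f\|_{L^2}$ and $\|\nabla f\cdot\mathcal{D}^{s-1}g\|_{L^2}$. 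The first two are controlled by H\"older with the same exponent pair $\big(q,\tfrac{2q}{q-2}\big)$, upon observing the Riesz-transform identity $\|\mathcal{D}^{s-1}\nabla f\|_{L^q}\lesssim\|\mathcal{D}^sf\|_{L^q}\lesssim\|h\|_{W^{s,q}}$. The third and most delicate term is where the hypothesis $(\nabla h)(0)=0$ is essential: together with Lemma \ref{lederi} it yields $|\nabla f\cdot\mathcal{D}^{s-1}g|\lesssim\|h\|_{W^{s,q}}\,e^{-|x|/2}(|x|^{\vartheta-1}+|x|^{\vartheta-s})$, whose worst near-origin contribution $|x|^{\vartheta-s}$ is again $L^2$-integrable precisely when $q>6$. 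The spherical symmetry of $h$ is invoked only implicitly, in making $(\nabla h)(0)=0$ automatic for $h\in\mathcal{C}^{1,\vartheta}$.
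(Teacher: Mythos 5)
Your proposal follows essentially the same route as the paper's proof: the same factorisation $f=e^{-\frac12|x|}(h-h(0))$, $g=\widetilde{G}$, the same application of \eqref{eq:KatoPonce_Vladimir_quattro} with $s_1=s$, $s_2=0$ and H\"older pair $\big(q,\tfrac{2q}{q-2}\big)$, the same four resulting terms, and the same Plancherel reduction of the divergence correction to $\|\mathcal{D}^{s-1}(\widetilde{G}\,\nabla f)\|_{L^2}$ --- the only deviation being that for this last quantity the paper simply invokes \eqref{tecre} (legitimate because $\nabla f$ vanishes at the origin thanks to $(\nabla h)(0)=0$), whereas you re-derive it by hand via Theorem \ref{KatoPonce_Vladimir}(i), Riesz-transform boundedness, and pointwise bounds, which amounts to unpacking the proof of Proposition \ref{tst} at order $s-1$. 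The argument is correct; the only (harmless) slip is that $\vartheta=s-1-\tfrac3q$ ranges in $(0,1)$ rather than $(\tfrac12,1]$, which affects nothing since you only use $s-\vartheta=1+\tfrac3q<\tfrac32$ for $q>6$.
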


Prior to proving Proposition \ref{tstdue} let us highlight the following property.

\begin{lemma}\label{le:fra_decay}
	Under the assumptions of Proposition \ref{tstdue}, 
	\begin{equation}\label{eq:fra_decay}
	|h(x)-h(0)|\;\lesssim \;\|h\|_{W^{s,q}}\;|x|^{1+\vartheta(s,q)}\,,
	\end{equation}
	where $\vartheta(s,q)=s-1-\frac3q$, as fixed in \eqref{eq:SobEmb-II}.
\end{lemma}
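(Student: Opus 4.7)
My plan is to derive the bound as a direct consequence of the Sobolev embedding \eqref{eq:SobEmb-II} together with the hypothesis $(\nabla h)(0)=0$, via a one-dimensional Taylor-type argument along the segment $[0,x]$.

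First I would invoke \eqref{eq:SobEmb-II}: since $s\in(\frac32,2]$ and $q\in(6,+\infty)$ we have $W^{s,q}(\mathbb{R}^3)\hookrightarrow\mathcal{C}^{1,\vartheta(s,q)}(\mathbb{R}^3)$, with
\[
\|h\|_{\mathcal{C}^{1,\vartheta(s,q)}}\;\lesssim\;\|h\|_{W^{s,q}}\,.
\]
In particular the gradient $\nabla h$ is globally $\vartheta(s,q)$-H\"older continuous with seminorm controlled by $\|h\|_{W^{s,q}}$. Using the assumption $(\nabla h)(0)=0$, this immediately yields the pointwise bound
\[
|\nabla h(y)|\;=\;|\nabla h(y)-\nabla h(0)|\;\lesssim\;\|h\|_{W^{s,q}}\,|y|^{\vartheta(s,q)} \qquad (y\in\mathbb{R}^3)\,.
\]

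Next I would apply the fundamental theorem of calculus along the straight segment from $0$ to $x$:
\[
h(x)-h(0)\;=\;\int_0^1 \nabla h(tx)\cdot x\,\ud t\,,
\]
so that
\[
|h(x)-h(0)|\;\leqslant\;|x|\int_0^1|\nabla h(tx)|\,\ud t\;\lesssim\;\|h\|_{W^{s,q}}\,|x|^{1+\vartheta(s,q)}\int_0^1 t^{\vartheta(s,q)}\,\ud t\,.
\]
Since $\vartheta(s,q)=s-1-\frac{3}{q}>\frac{1}{2}-\frac{3}{q}>0$ (as $s>\frac{3}{2}$ and $q>6$), the last integral converges and the desired estimate \eqref{eq:fra_decay} follows.

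There is no real obstacle here: once the Sobolev embedding is in hand the argument is essentially a two-line Taylor remainder estimate. The only point worth noting is that spherical symmetry of $h$ is not actually used in this lemma; the hypothesis that does the work is $(\nabla h)(0)=0$, and spherical symmetry in Proposition~\ref{tstdue} enters only to make this vanishing condition natural (for a radial $\mathcal{C}^1$ function, $\nabla h(0)=0$ is automatic if $h$ is to be differentiable at the origin) and will be exploited elsewhere in the proof of \eqref{tecredue}.
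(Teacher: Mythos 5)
Your proof is correct, and its core is the same as the paper's: the $\mathcal{C}^{1,\vartheta(s,q)}$ Sobolev embedding gives H\"older control of the first derivative, the hypothesis $(\nabla h)(0)=0$ upgrades this to the pointwise decay $|\nabla h(y)|\lesssim\|h\|_{W^{s,q}}|y|^{\vartheta(s,q)}$, and the fundamental theorem of calculus then yields \eqref{eq:fra_decay}. The only difference is organisational: the paper first uses the spherical symmetry to write $h(x)=\widetilde{h}(|x|)$ for an even profile $\widetilde{h}\in\mathcal{C}^{1,\vartheta(s,q)}(\R)$ with $\widetilde{h}'(0)=0$ and integrates the radial derivative, whereas you integrate $\nabla h$ directly along the segment $[0,x]$ in $\R^3$. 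Your variant is marginally more general, and your closing remark is accurate: spherical symmetry plays no role in this particular lemma, only $(\nabla h)(0)=0$ does; in the paper the symmetry is what makes that vanishing condition available for $h=w*|u|^2$ (Lemma \ref{convu_bou}(ii)) and is needed elsewhere in the proof of Proposition \ref{tstdue}.
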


\begin{proof}
	By assumption, $h(x)=\widetilde{h}(|x|)$ for some even function $\widetilde{h}:\R\rightarrow\C$. Owing to the embedding \eqref{eq:SobEmb-II}, $\widetilde{h}\in\mathcal{C}^{1,\vartheta(s,q)}(\R)$, whence  $\widetilde{h}'\in\mathcal{C}^{0,\vartheta(s,q)}(\R)$. Moreover, $\widetilde{h}'(0)=0$, because $(\nabla h)(0)=0$. Therefore,
	\begin{equation*}
	|\widetilde{h}'(\rho)|\;=\;|\widetilde{h}'(\rho)-\widetilde{h}'(0)|\;\lesssim \;\|h\|_{W^{s,q}}\:|\rho|^{\vartheta(s,q)}\,. 
	\end{equation*}
	As a consequence,
	\begin{equation*}
	|h(x)-h(0)|\;\leqslant\;\int_0^{|x|}|\widetilde{h}'(\rho)|\,\ud\rho\;\lesssim\; \|h\|_{W^{s,q}}\!\int_0^{|x|}\rho^{\vartheta(s,q)}\,\ud \vartheta\;\lesssim\;\|h\|_{W^{s,q}}\:|x|^{1+\vartheta(s,q)}\,,
	\end{equation*}
	which completes the proof.
\end{proof}

\begin{proof}[Proof of Proposition \ref{tstdue}]
	It is not restrictive to fix $\lambda=1$. For short, we set $G(x):=|x|^{-1}e^{-|x|}$ and $\widetilde{G}(x):=|x|^{-1}e^{-\frac{1}{2}|x|}$.

	Let us split
	\begin{align}
	\big\|&\mathcal{D}^s\big((h-h(0))\,G\big)\big\|_{L^2}\;=\;\big\|\mathcal{D}^s\big(e^{-\frac{1}{2}|x|}(h-h(0))\,\widetilde{G}\big)\big\|_{L^2} \nonumber \\
	&\leqslant\; \big\|\mathcal{D}^s\big(e^{-\frac{1}{2}|x|}(h-h(0))\,\widetilde{G}\big) - e^{-\frac{1}{2}|x|}(h-h(0))\mathcal{D}^s\widetilde{G} \nonumber \\
	&\quad+(s-1)\widetilde{G}\,\mathcal{D}^s\big(e^{-\frac{1}{2}|x|}(h-h(0))\big)+s\,\mathcal{D}^{s-2}\nabla\cdot\big(\widetilde{G}\,\nabla\big(e^{-\frac{1}{2}|x|}(h-h(0))\big)\big\|_{L^2} \nonumber \\
	&\quad +\big\|e^{-\frac{1}{2}|x|}(h-h(0))\,\mathcal{D}^s\widetilde{G}\big\|_{L^2}+(s-1)\big\|\widetilde{G}\,\mathcal{D}^s\big(e^{-\frac{1}{2}|x|}(h-h(0))\big)\big\|_{L^2} \nonumber \\
	&\quad +s\,\big\|\mathcal{D}^{s-2}\nabla\cdot\big(\widetilde{G}\nabla\big(e^{-\frac{1}{2}|x|}(h-h(0))\big)\big\|_{L^2} \nonumber \\
	&\equiv\;\mathcal{R}_1+\mathcal{R}_2+\mathcal{R}_3+\mathcal{R}_4\,. \tag{i}
	\end{align}

	We estimate the term $\mathcal{R}_1$ by means of the commutator bound \eqref{eq:KatoPonce_Vladimir_quattro} with 
	$s_1=s$ and $s_2=0$ and of \eqref{eq:algebra}, namely
	\begin{equation*}\tag{ii}
	\mathcal{R}_1\;\lesssim \;\Big\|\mathcal{D}^s\big(e^{-\frac{1}{2}|x|}(h-h(0))\big)\Big\|_{L^q}\|\widetilde{G}\|_{L^{\frac{2q}{q-2}}}\;\lesssim\; \|h\|_{W^{s,q}}
	\end{equation*}
	(since $\frac{2q}{q-2}\in[2,3)$ and $\|\widetilde{G}\|_{\frac{2q}{q-2}}<+\infty$).

	For the estimate of $\mathcal{R}_2$, we observe that $s-\vartheta(s,q)<\frac{3}{2}$ and hence \eqref{deripower} implies
	\[
	\big\|\,|x|^{1+\vartheta(s,q)}\,\mathcal{D}^s\widetilde{G}\big\|_{L^2}\;\lesssim\;\big\|\,e^{-\frac{1}{2}|x|}\big(\,|x|^{\vartheta(s,q)}+|x|^{-(s-\vartheta(s,q))}\big)\big\|_{L^2}\;<\;+\infty\,;
	\]
	this and the bound \eqref{eq:fra_decay} yield
	\begin{equation*}\tag{iii}
	\mathcal{R}_2\;\leqslant\;\Big\|\frac{h-h(0)}{\;|x|^{1+\vartheta(s,q)}}\Big\|_{L^{\infty}}\;\big\|\,|x|^{1+\vartheta(s,q)}\mathcal{D}^s\widetilde{G}\big\|_{L^2}\;\lesssim\; \|h\|_{W^{s,q}}\,.
	\end{equation*}

	For $\mathcal{R}_3$, H\"older's inequality, the property \eqref{eq:algebra}, and Sobolev's embedding yield
	\begin{equation*}\tag{iv}
	\begin{split}
	\mathcal{R}_3\;&\lesssim\; \big\|\mathcal{D}^s\big(e^{-\frac{1}{2}|x|}(h-h(0))\big)\|_{L^q}\,\|\widetilde{G}\|_{L^{\frac{2q}{q-2}}} \\
	&\lesssim\;\big\|e^{-\frac{1}{2}|x|}(h-h(0))\|_{W^{s,q}}\;\lesssim\;\|h\|_{W^{s,q}}+\|h\|_{L^\infty} \\
	&\lesssim\; \|h\|_{W^{s,q}}.
	\end{split}
	\end{equation*}

	For $\mathcal{R}_4$, one has
	\begin{equation*}\tag{v}
	\begin{split}
	\mathcal{R}_4\;&\lesssim\; \big\|\mathcal{D}^{s-1}\big(\widetilde{G}\,\nabla\big(e^{-\frac{1}{2}|x|}(h-h(0)\big)\big)\big\|_{L^2}\\
	&\lesssim\;\big\|\nabla\big(e^{-\frac{1}{2}|x|}(h-h(0)\big)\big\|_{W^{s-1,q}}\;\lesssim \;\| h \|_{W^{s,q}}\,,
	\end{split}
	\end{equation*}
	where we used the estimate \eqref{tecre} in the second inequality  (indeed, $s-1\in(\frac{1}{2},1)$), and the property \eqref{eq:algebra} and Sobolev's embedding in the last inequality. 
	
	Plugging the bounds (ii)-(v) into (i) completes the proof.
\end{proof}

\section{$L^2$-theory and low regularity theory}

In this Section we prove Theorems \ref{thm:L2_reg} and \ref{thm:low_reg}. Let us start with Theorem \ref{thm:low_reg} and then discuss the adaptation for $s=0$. The proof for $s\in(0,\frac{1}{2})$ is based on a fixed point argument in the complete metric space $(\mathcal{X}_{T,M},d)$ defined by
\begin{equation}\label{eq:fixed_point_space}
\begin{split}
\mathcal{X}_{T,M}\;&:=\;\big\{u\in L^\infty([-T,T],\widetilde{H}_\alpha^s(\mathbb{R}^3))\,|\,\|u\|_{L^\infty([-T,T],\widetilde{H}_\alpha^s(\mathbb{R}^3)}\leqslant M\big\} \\
d(u,v)\;&:=\;\|u-v\|_{L^\infty([-T,T],L^2(\mathbb{R}^3))}
\end{split}
\end{equation}
for given $T,M>0$. This is going to be the same space for the contraction argument in the intermediate regularity regime $s\in(\frac{1}{2},\frac{3}{2})$ (Section \ref{sec:intermediate}), whereas for the high regularity regime $s\in(\frac{3}{2},2)$ (Section \ref{sec:higher_reg}) we are going to only use the spherically symmetric sector of the space \eqref{eq:fixed_point_space}.

\begin{proof}[Proof of Theorem  \ref{thm:low_reg}]~

	Since by assumption $s\in(0,\frac{1}{2})$, the spaces $H^s(\mathbb{R}^3)$ and $\widetilde{H}^s_\alpha(\mathbb{R}^3)$ coincide and their norms are equivalent (Theorem \ref{thm:domain_s}), so we can interchange them in the computations that follow.

	From the expression \eqref{eq:Smap} for the solution map $\Phi(u)$ one finds
	\[
	\|\Phi(u)\|_{L^\infty\widetilde{H}_\alpha^s }\;\leqslant\;\|f\|_{\widetilde{H}_\alpha^s }+T\,\|(w*|u|^2)u\|_{L^\infty\widetilde{H}_\alpha^s }
	\]
	and applying the fractional Leibniz rule \eqref{eq:KatoPonce} (Theorem \ref{KatoPonce}), H\"{o}lder's inequality, and Young's inequality one also finds
	\[
	\begin{split}
	\|(w*|u|^2)&u\|_{L^\infty\widetilde{H}_\alpha^s}\;=\;\|(w*|u|^2)u\|_{L^\infty H^s } \\
	&\lesssim\;\|w*|u|^2\|_{L^\infty L^\infty }\,\|\mathcal{D}^su\|_{L^\infty L^2 } \\
	&\qquad +\|\mathcal{D}^s(w*|u|^2)\|_{L^\infty L^{\frac{6}{\gamma},\infty} }\,\|u\|_{L^\infty L^{\frac{6}{3-\gamma}} } \\
	&\lesssim\;\|w\|_{L^{\frac{3}{\gamma},\infty}\!(\mathbb{R}^3)}   \, \|u\|^2_{L^\infty L^{\frac{6}{3-\gamma}} }\,\|\mathcal{D}^su\|_{L^\infty L^2 }\,. \\
	\end{split}
	\]
	Sobolev's embedding $\widetilde{H}_\alpha(\mathbb{R}^3)=H^s(\mathbb{R}^3)\hookrightarrow H^{\frac{\gamma}{2}}(\mathbb{R}^3)\hookrightarrow L^{\frac{6}{3-\gamma}}(\mathbb{R}^3)$ then yields
	\[\tag{i}
	\|\Phi(u)\|_{L^\infty \widetilde{H}_\alpha^s(\mathbb{R}^3)}\;\leqslant\;\|f\|_{ \widetilde{H}_\alpha^s}+C_1\,T\,\|w\|_{L^{\frac{3}{\gamma},\infty}} \,\|u\|^3_{L^\infty \widetilde{H}_\alpha^s }\,.
	\]
	for some constant $C_1>0$.

	On the other hand, again by H\"{o}lder's and Young's inequality,
	\[
	\begin{split}
	\|\Phi(u)-\Phi(v)&\|_{L^\infty L^2 }\;\leqslant\;T\,\|(w*|u|^2)u-(w*|v|^2)v\|_{L^\infty L^2 } \\
	&\lesssim\;T\,\Big( \|(w*|u|^2)(u-v)\|_{L^\infty L^2 } + \big\|\big(w*(|u|^2-|v|^2\big) v\big\|_{L^\infty L^2 }\Big) \\
	&\lesssim\;T\,\Big( \|w\|_{L^{\frac{3}{\gamma},\infty}}   \, \|u\|^2_{L^\infty L^{\frac{6}{3-\gamma}} }\,\|u-v\|_{L^\infty L^2} \\
	&\qquad\qquad + \|w\|_{L^{\frac{3}{\gamma},\infty}}   \,\|u-v\|_{L^\infty L^2 }\,\||u|+|v|\|_{L^\infty L^{\frac{6}{3-\gamma}} }\|v|\|_{L^\infty L^{\frac{6}{3-\gamma}} }\,\Big)\,,
	\end{split}
	\]
	whence, by the same embedding $\widetilde{H}_\alpha(\mathbb{R}^3)\hookrightarrow L^{\frac{6}{3-\gamma}}(\mathbb{R}^3)$ as before,
	\[\tag{ii}
	\begin{split}
	d(\Phi(u),\Phi(v))\;&\leqslant\;C_2\,\|w\|_{L^{\frac{3}{\gamma},\infty}}\big(\|u\|^2_{L^\infty \widetilde{H}_\alpha^s }+\|v\|^2_{L^\infty \widetilde{H}_\alpha^s } \big)\: T\,d(u,v)
	\end{split}
	\]
	for some constant $C_2>0$.

	Thus, choosing $T$ and $M$ such that
	\[
	M\;=\;2\,\|f\|_{\widetilde{H}_\alpha^s}\,,\qquad T\;=\;{\textstyle\frac{1}{4}}\,\big(\max\{C_1,C_2\}\,M^2 \|w\|_{L^{\frac{3}{\gamma},\infty}}\big)^{-1}\,,
	\]
	estimate (i) reads $ \|\Phi(u)\|_{L^\infty \widetilde{H}_\alpha^s}\leqslant M$ and shows that $\Phi$ maps the space $\mathcal{X}_{T,M}$ defined in \eqref{eq:fixed_point_space} into itself, whereas estimate (ii) reads $d(\Phi(u),\Phi(v))\leqslant\frac{1}{2}d(u,v)$ and shows that $\Phi$ is a contraction on $\mathcal{X}_{T,M}$. By Banach's fixed point theorem, there exists a unique fixed point $u\in \mathcal{X}_{T,M}$ of $\Phi$ and hence a unique solution $u\in \mathcal{X}_{T,M}$ to \eqref{eq:integral_formula_Duhamel}, which is therefore also continuous in time.

	Furthermore, by a customary continuation argument we can extend such a solution over a maximal interval for which the blow-up alternative holds  true. Also the continuous dependence on the initial data is a direct consequence of the fixed point argument. We omit the standard details, they are part of the well-established theory of semi-linear Schr\"odinger equations.
\end{proof}

We move now to the proof of Theorem \ref{thm:L2_reg}. Crucial for this case are the Strichartz estimates of Theorem \ref{thm:Strichartz}. 
To this aim, we modify the contraction space \eqref{eq:fixed_point_space} to the complete metric space $(\mathcal{Y}_{T,M},d)$ defined by
\begin{equation}\label{eq:fixed_point_space_Y}
\begin{split}
\mathcal{Y}_{T,M}\;&:=\;\left\{\!\! 
\begin{array}{c}
u\in L^\infty([-T,T],L^2(\mathbb{R}^3))\,\cap  L^{q(\gamma)}([-T,T],L^{r(\gamma)}(\mathbb{R}^3)) \\
\textrm{s.~t.}\;\|u\|_{L^\infty([-T,T],L^2(\mathbb{R}^3))}+\|u\|_{L^{q(\gamma)}([-T,T],L^{r(\gamma)}(\mathbb{R}^3))}\leqslant M
\end{array}
\!\!\right\} \\
d(u,v)\;&:=\;\|u-v\|_{L^\infty([-T,T],L^2(\mathbb{R}^3))}+\|u-v\|_{L^{q(\gamma)}([-T,T],L^{r(\gamma)}(\mathbb{R}^3))}
\end{split}
\end{equation}
for given $T,M>0$, where
\begin{equation}
q(\gamma)\;:=\;\frac{6}{\gamma}\,,\qquad r(\gamma)\;:=\;\frac{18}{9-2\gamma}
\end{equation}
are defined so as to form an admissible pair $(q(\gamma),r(\gamma))$ for $-\Delta_\alpha$, in the sense of \eqref{eq:admie}. For the rest of the proof let us drop the explicit dependence on $\gamma$ in $(q,r)$.

\begin{proof}[Proof of Theorem  \ref{thm:L2_reg}]~
	Clearly, when $\gamma=0$ the very same argument used in the proof of Theorem \ref{thm:low_reg} applies.

	When $\gamma\in(0,\frac{3}{2})$ we exploit instead a contraction argument in the modified space \eqref{eq:fixed_point_space_Y}.
	
	One has
	\[
	\begin{split}
	\|\Phi(u)&\|_{L^\infty L^2}+\|\Phi(u)\|_{L^q L^r}\;\leqslant\;\Big\|\int_0^t e^{\ii(t-\tau)\Delta_\alpha}\big((w*|u|^2)u\big)(\tau)\,\ud \tau \Big\|_{L^\infty L^2}\\
	&\qquad+\Big\|\int_0^t e^{\ii(t-\tau)\Delta_\alpha}\big((w*|u|^2)u\big)(\tau)\,\ud \tau \Big\|_{L^q L^r}+\|f\|_{L^2}+\|e^{\ii t\Delta_\alpha} f\|_{L^q L^r}\,,
	\end{split}
	\]
	from which, by means of the Strichartz estimates \eqref{stri-1}-\eqref{stri-2}, one deduces
	\[
	\|\Phi(u)\|_{L^\infty L^2}+\|\Phi(u)\|_{L^q L^r}\;\leqslant\;C\,\Big( \|f\|_{L^2}+\|(w*|u|^2)u\|_{L^{q'}L^{r'}}\Big)
	\]
	for some constant $C>0$.

	By H\"{o}lder's and Young's inequalities,
	\[
	\begin{split}
	\|(w*|u|^2)u\|_{L^{q'}L^{r'}}\;&\leqslant\; \|w*|u|^2\|_{L^{q'}L^{\frac{9}{\gamma}}}\|u\|_{L^\infty L^2}\\
	&\lesssim\;\|w\|_{L^{\frac{3}{\gamma},\infty}}\|u\|^2_{L^{2q'}L^{r}} \|u\|_{L^\infty L^2}
	\end{split}
	\]
	and
	\[
	\|u\|^2_{L^{2q'}L^{r}}\;\leqslant\;(2T)^{1-\frac{\gamma}{2}}\|u\|^2_{L^{q}L^{r}}\,,
	\]
	whence
	\[\tag{i}
	\|\Phi(u)\|_{L^\infty L^2}+\|\Phi(u)\|_{L^q L^r}\;\leqslant\;C_1\,\Big(\|f\|_{L^2}+T^{1-\frac{\gamma}{2}}\|w\|_{L^{\frac{3}{\gamma},\infty}}\|u\|^2_{L^{q}L^{r}}\|u\|_{L^\infty L^2}\Big)
	\]
	for some constant $C_1>0$.
	
	Following the very same scheme, one finds
	\[
	\begin{split}
	\|\Phi(u)&-\Phi(v)\|_{L^\infty L^2}+\|\Phi(u)-\Phi(v)\|_{L^q L^r}\;\lesssim\;\|(w*|u|^2)u-(w*|v|^2)v\|_{L^{q'}L^{r'}} \\
	&\leqslant\;\|(w*|u|^2)(u-v)\|_{L^{q'}L^{r'}}+\|w*(|u|^2-|v|^2)v\|_{L^{q'}L^{r'}} \,,
	\end{split}
	\]
	and moreover
	\[
	\|(w*|u|^2)(u-v)\|_{L^{q'}L^{r'}}\;\lesssim\;T^{1-\frac{\gamma}{2}}\|w\|_{L^{\frac{3}{\gamma},\infty}}\|u\|^2_{L^{q}L^{r}}\|u-v\|_{L^\infty L^2}
	\]
	and
	\[
	\|w*(|u|^2-|v|^2)v\|_{L^{q'}L^{r'}}\;\lesssim\;T^{1-\frac{\gamma}{2}}\|w\|_{L^{\frac{3}{\gamma},\infty}}\|u-v\|_{L^q L^r}\big(\|u\|_{L^q L^r}+\|v\|_{L^q L^r}\big)\|v\|_{L^\infty L^2}\,.
	\]
	Thus,
	\[\tag{ii}
	\begin{split}
	d(\Phi(u),\Phi(v))\;\leqslant\;&C_2\|w\|_{L^{\frac{3}{\gamma},\infty}}\big(\|u\|^2_{L^\infty L^2}+\|u\|^2_{L^q L^r}+\|v\|^2_{L^\infty L^2}+\|v\|^2_{L^q L^r}\big)\;\times \\
	&\times\;T^{1-\frac{\gamma}{2}}\,d(u,v)\,.
	\end{split}
	\]

	Therefore, choosing $T$ and $M$ such that 
	\[
	M\;=\;2C_1\,\|f\|_{L^2}\,,\qquad T\;=\;\big(8\max\{C_1,C_2\}M^2\|w\|_{L^{\frac{3}{\gamma},\infty}}\big)^{-1+\frac{\gamma}{2}}\,,
	\]
	estimate (i) reads $\|\Phi(u)\|_{L^\infty L^2}+\|\Phi(u)\|_{L^q L^r}\leqslant M$ and shows that $\Phi$ maps 
	$\mathcal{Y}_{T,M}$ into itself, whereas estimate (ii) reads $d(\Phi(u),\Phi(v))\leqslant\frac{1}{2}d(u,v)$ and shows that $\Phi$ is a contraction on $\mathcal{Y}_{T,M}$.

	The thesis then follows by Banach's fixed point theorem through the same arguments outlined in the end of the proof of Theorem \ref{thm:low_reg}.
\end{proof}

For later purposes, let us conclude this Section with the following stability result.

\begin{proposition}\label{stabint_mass}
	Let $\alpha\geqslant 0$. For given $w\in L^{\frac{3}{\gamma},\infty}(\mathbb{R}^3)$, $\gamma\in[0,\frac32)$, and $f\in L^2(\mathbb{R}^3)$, let $u$ be the unique strong $L^2$-solution to the Cauchy problem \eqref{eq:Cauchy_problem_sing_Hartree} in the maximal interval $(-T_*,T^*)$. Consider moreover the sequences $(w_n)_n$ and $(f_n)_n$ of potentials and initial data such that $w_n\xrightarrow[]{n\to+\infty} w$ in $L^{\frac{3}{\gamma},\infty}(\mathbb{R}^3)$ and $f_n\xrightarrow[]{n\to+\infty} f$ in $L^2(\mathbb{R}^3)$. Then there exists a time $T:=T(\|w\|_{L^{\frac{3}{\gamma},\infty}},\|f\|_{L^2})>0$, with $[-T,T]\subset(-T_*,T^*)$, such that, for sufficiently large $n$, the Cauchy problem \eqref{eq:Cauchy_problem_sing_Hartree} with potential $w_n$ and initial data $f_n$ admits a unique strong $L^2$-solution $u_n$ in the interval $[-T,T]$. Moreover,
	\begin{equation}\label{stabconvpa_mass}
	u_n\xrightarrow[]{n\to+\infty} u\qquad\textrm{in  }\;\;\mathcal{C}([-T,T],L^2(\R^3))\,.
	\end{equation}
\end{proposition}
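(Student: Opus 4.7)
The plan is to re-run the Banach fixed-point argument of Theorem~\ref{thm:L2_reg} uniformly in $n$ and then to control $u_n-u$ directly through the Duhamel representation \eqref{eq:integral_formula_Duhamel}.

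First, I would secure a common interval of existence. Since $w_n\to w$ in $L^{\frac{3}{\gamma},\infty}$ and $f_n\to f$ in $L^2$, for $n$ large enough one has $\|w_n\|_{L^{\frac{3}{\gamma},\infty}}\leqslant 2\|w\|_{L^{\frac{3}{\gamma},\infty}}$ and $\|f_n\|_{L^2}\leqslant 2\|f\|_{L^2}$. Setting $M:=4C_1\|f\|_{L^2}$ and choosing $T>0$ small enough that $[-T,T]\subset(-T_*,T^*)$ and that $T\leqslant(16\max\{C_1,C_2\}M^2\|w\|_{L^{\frac{3}{\gamma},\infty}})^{-1+\frac{\gamma}{2}}$ (with $C_1,C_2$ as in the proof of Theorem~\ref{thm:L2_reg}, and analogous constants for $\gamma=0$ borrowed from Theorem~\ref{thm:low_reg}), the very same contraction in $\mathcal{Y}_{T,M}$, or in $\{u\in L^\infty([-T,T],L^2):\|u\|_{L^\infty L^2}\leqslant M\}$ when $\gamma=0$, produces for each large enough $n$ a unique strong $L^2$-solution $u_n$ on $[-T,T]$, and simultaneously places $u$ and all the $u_n$ in that contraction space.

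Next, I would compare. Let $\Phi$ and $\Phi_n$ denote the solution maps \eqref{eq:Smap} associated with $(w,f)$ and $(w_n,f_n)$, and set $\|\cdot\|_{\mathcal{Y}}:=\|\cdot\|_{L^\infty([-T,T],L^2)}+\|\cdot\|_{L^{q(\gamma)}([-T,T],L^{r(\gamma)})}$ (with the second summand dropped when $\gamma=0$). Since $u=\Phi(u)$ and $u_n=\Phi_n(u_n)$, I split
\[
u_n-u\;=\;\bigl(\Phi(u_n)-\Phi(u)\bigr)+\bigl(\Phi_n(u_n)-\Phi(u_n)\bigr).
\]
By estimate (ii) in the proof of Theorem~\ref{thm:L2_reg}, the first term is bounded in $\|\cdot\|_{\mathcal{Y}}$ by $\frac{1}{2}\|u_n-u\|_{\mathcal{Y}}$, since $T$ was chosen precisely to enforce the contraction. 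The second term is
\[
e^{\ii t\Delta_\alpha}(f_n-f)\;-\;\ii\!\int_0^t e^{\ii(t-\tau)\Delta_\alpha}\bigl[(w_n-w)\ast|u_n|^2\bigr]u_n(\tau)\,\ud\tau,
\]
and the same Strichartz, Young, and H\"older chain that produced estimate (i) in the proof of Theorem~\ref{thm:L2_reg} delivers
\[
\|\Phi_n(u_n)-\Phi(u_n)\|_{\mathcal{Y}}\;\lesssim\;\|f_n-f\|_{L^2}+T^{1-\frac{\gamma}{2}}\|w_n-w\|_{L^{\frac{3}{\gamma},\infty}}M^3.
\]
Absorbing the factor $\tfrac{1}{2}\|u_n-u\|_{\mathcal{Y}}$ on the left and letting $n\to+\infty$ yields $\|u_n-u\|_{\mathcal{Y}}\to 0$, which a fortiori gives \eqref{stabconvpa_mass}.

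The only delicate point, and hence the sole possible obstacle, is to verify that the contraction time $T$ in Theorem~\ref{thm:L2_reg} depends solely on $\|w\|_{L^{\frac{3}{\gamma},\infty}}$ and $\|f\|_{L^2}$: this is transparent from the explicit formula for $T$ in that proof, and it is precisely what permits the uniform-in-$n$ choice making both $u$ and all $u_n$ available on the same interval, so that the comparison estimate above closes.
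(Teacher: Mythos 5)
Your proposal is correct and follows essentially the same route as the paper: a uniform-in-$n$ existence time determined only by $\|w\|_{L^{3/\gamma,\infty}}$ and $\|f\|_{L^2}$, followed by a Duhamel comparison of $u_n$ and $u$ estimated with the same Strichartz--H\"older--Young chain and closed by absorbing the $\|u_n-u\|$ term for $T$ small. The only cosmetic difference is that you split $u_n-u$ as $\bigl(\Phi(u_n)-\Phi(u)\bigr)+\bigl(\Phi_n(u_n)-\Phi(u_n)\bigr)$ and invoke the ready-made contraction constant $\tfrac12$, whereas the paper expands the nonlinear difference into three terms and re-estimates them directly before absorbing (``decreasing $T$ further if needed'').
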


\begin{proof}
	As a consequence of  Theorem \ref{thm:L2_reg}, there exist an interval $[-T_n,T_n]$ for some $T_n:=T_n(\|w_n\|_{L^{\frac{3}{\gamma},\infty}},\|f_n\|_{L^2})>0$ and a unique $u_n\in\mathcal{C}([-T_n,T_n],L^2(\mathbb{R}^3))$ such that
	\begin{equation*}\tag{i}
	u_n(t)\;=\;e^{\ii t\Delta_\alpha} f_n-\ii\int_0^t e^{\ii (t-\tau)\Delta_\alpha}(w_n*|u_n(\tau)|^2)u_n(\tau)\,\ud\tau\,.
	\end{equation*}
	Since $\|w_n\|_{L^{\frac{3}{\gamma},\infty}}$ and $\|f_n\|_{L^2}$ are asymptotically close, respectively, to $\|w\|_{L^{\frac{3}{\gamma},\infty}}$ and $\|f\|_{L^2}$, then there exists $T:=T(\|w\|_{L^{\frac{3}{\gamma},\infty}},\|f\|_{L^2})$ such that $T\leqslant T_n$ eventually in $n$, which means that $u_n$ is defined on  $[-T,T]$.
	Let us set $\phi_n:=u-u_n$.

	By assumption $u$ solves \eqref{eq:integral_formula_Duhamel}, thus subtracting (i) from \eqref{eq:integral_formula_Duhamel} yields
	\begin{equation*}\tag{ii}
	\begin{aligned}
	\phi_n\;&=\;e^{\ii t\Delta_\alpha} (f-f_n)-\ii\int_0^t e^{\ii (t-\tau)\Delta_\alpha}\big((w*|u|^2)u-(w_n*|u_n|^2)u_n\big)(\tau)\,\ud\tau\\
	&=\;e^{\ii t\Delta_\alpha} (f-f_n)-\ii\int_0^t e^{\ii (t-\tau)\Delta_\alpha}\Big\{\big((w-w_n)*|u|^2\big)u+(w_n*|u|^2)\phi_n\\
	&\qquad\qquad\qquad\qquad\qquad\qquad\qquad +\big(w_n*(\overline{u}\phi_n+\overline{\phi_n}u_n) \big)u_n\Big\}(\tau)\,\ud\tau\,.
	\end{aligned}
	\end{equation*}

	Let us first discuss the case $\gamma=0$. From (ii)  above, using H\"older's and Young's inequality in weak spaces, one has
	\[
	\begin{split}
	\|\phi_n\|_{L^{\infty}L^2}\;&\lesssim\;\|f-f_n\|_{L^2} +T\,\|w-w_n\|_{L^{\infty}}\,\|u\|^3_{L^{\infty}L^2} \\
	&\qquad +T\,\|w_n\|_{L^{\infty}}\big(\,\|u\|^2_{L^{\infty}L^2}+\|u_n\|^2_{L^{\infty}L^2}\,\big)\,\|\phi_n\|_{L^{\infty}L^2}\,.
	\end{split}
	\]
	Since $\|w_n\|_{L^{\infty}}$ and $\|u_n\|_{L^{\infty}L^2}$ are bounded uniformly in $n$, then the above inequality implies, decreasing further $T$ if needed,
	\[
	\|\phi_n\|_{L^{\infty}L^2}\;\lesssim\;\|f-f_n\|_{L^2}+\|w-w_n\|_{L^{\frac{3}{\gamma},\infty}}\;\xrightarrow[]{n\to+\infty}\;0\,,
	\]
	which proves the proposition in the case $\gamma=0$.

	Let now $\gamma\in(0,\frac32)$. In this case, owing to Theorem \ref{thm:L2_reg}, $u,u_n\in L^q([-T,T],L^r(\R^3))$, where $(q,r)=(\frac{6}{\gamma},\frac{18}{9-2\gamma})$ is the admissible pair defined in the proof therein. We can then argue as in the proof of Theorem \ref{thm:L2_reg}.
	Applying the Strichartz estimates \eqref{stri-1}-\eqref{stri-2} to the identity (ii) above, one gets
	\begin{equation*}\tag{iii}
	\begin{split}
	\|\phi_n\|_{L^{\infty}L^2}+\|&\phi_n\|_{L^{q}L^r}\;\lesssim\;\|f-f_n\|_{L^2}+\big\|\big((w-w_n)*|u|^2\big)u\big\|_{L^{q'}L^{r'}}\\
	&+\big\|(w_n*|u|^2)\phi_n\big\|_{L^{q'}L^{r'}}+\big\|\big(|w_n|*(|u_n|+|u|)|\phi_n|\big)u_n\big\|_{L^{q'}L^{r'}}\,.
	\end{split}
	\end{equation*}
	By means of H\"older's and Young's inequality in weak spaces one finds
	\[\tag{iv}
	\begin{split}
	\big\|\big((w-w_n)*|u|^2\big)u\big\|_{L^{q'}L^{r'}}\;&\lesssim\;T^{1-\frac{\gamma}{2}}\|w-w_n\|_{L^{\frac{3}{\gamma},\infty}}\|u_n\|^2_{L^qL^r}\,\|u\|_{L^{\infty}L^2} \\
	\big\|(w_n*|u|^2)\phi_n\big\|_{L^{q'}L^{r'}}\;&\lesssim\;T^{1-\frac{\gamma}{2}}\|w_n\|_{L^{\frac{3}{\gamma},\infty}}\,\|u\|^2_{L^qL^r}\|\phi_n\|_{L^{\infty}L^2} \\
	\big\|\big(|w_n|*(|u_n|+|u|)|\phi_n|\big)u_n\big\|_{L^{q'}L^{r'}}\;&\lesssim\;T^{1-\frac{\gamma}{2}}\|w_n\|_{L^{\frac{3}{\gamma},\infty}}\,\big(\,\|u_n\|_{L^qL^r}+\|u\|_{L^qL^r}\big)\;\times \\
	&\qquad\qquad\times\|\phi_n\|_{L^qL^r}\|u\|_{L^{\infty}L^2}\,.
	\end{split}
	\]
	Since $\|w_n\|_{L^{\frac{3}{\gamma},\infty}}$ and $\|u_n\|_{L^qL^r}$ are bounded uniformly in $n$, then inequalities (iii) and (iv) imply, decreasing further $T$ if needed,
	\[
	\|\phi_n\|_{L^{\infty}L^2}+\|\phi_n\|_{L^{q}L^r}\;\lesssim\;\|f-f_n\|_{L^2}+\|w-w_n\|_{L^{\frac{3}{\gamma},\infty}}\;\xrightarrow[]{n\to+\infty}\;0\,,
	\]
	which completes the proof.
\end{proof}

\section{Intermediate regularity theory}\label{sec:intermediate}

In this Section we prove Theorem \ref{thm:high_reg}. The proof is based again on a contraction argument in the complete metric space $\mathcal{X}_{T,M}$, for suitable $T,M>0$, defined in \eqref{eq:fixed_point_space}, now with $s\in(\frac{1}{2},\frac{3}{2})$.

As a consequence, in the energy space ($s=1$) we shall deduce that the solution to the integral problem \eqref{eq:integral_formula_Duhamel} is also a solution to the differential problem \eqref{eq:Cauchy_problem_sing_Hartree}.

We conclude the Section with a stability result of the solution with respect to the initial datum $f$ and the potential $w$.

Let us start with two preparatory lemmas.

\begin{lemma}\label{convu}
	Let $\alpha\geqslant 0$ and $s\in(\frac{1}{2},\frac{3}{2})$. Let $w\in W^{s,p}(\mathbb{R}^3)$ for $p\in(2,+\infty)$. Then
	\begin{equation}\label{eq:convu}
	\begin{split}
	\|w*(\psi_1\psi_2)\|_{L^\infty(\R^3)}\;&\lesssim\;\|w*(\psi_1\psi_2)\|_{W^{s,3p}(\R^3)} \\
	&\lesssim\;\|w\|_{W^{s,p}(\R^3)}\|\psi_1\|_{ \widetilde{H}^s_\alpha(\mathbb{R}^3)}\|\psi_2\|_{ \widetilde{H}^s_\alpha(\mathbb{R}^3)}
	\end{split}
	\end{equation}
	for any $ \widetilde{H}^s_\alpha$-functions $\psi_1$, $\psi_2$, and $\psi_3$.
\end{lemma}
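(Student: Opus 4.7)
The first inequality $\|w*(\psi_1\psi_2)\|_{L^\infty}\lesssim\|w*(\psi_1\psi_2)\|_{W^{s,3p}}$ is nothing but Sobolev's embedding $W^{s,3p}(\mathbb{R}^3)\hookrightarrow L^\infty(\mathbb{R}^3)$, which in three dimensions is available as soon as $s>3/(3p)=1/p$: our hypotheses $s>\tfrac{1}{2}$ and $p>2$ give $1/p<1/2<s$, so this step is immediate.

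For the main inequality, the plan is to decouple the roles of $w$ and $\psi_1\psi_2$ by Young's convolution inequality. This is possible because the Bessel potential $(\mathbbm{1}-\Delta)^{s/2}$ commutes with convolution: a direct Fourier-side computation shows
$$(\mathbbm{1}-\Delta)^{s/2}\big(w*(\psi_1\psi_2)\big)\;=\;\big((\mathbbm{1}-\Delta)^{s/2}w\big)*(\psi_1\psi_2).$$
Applying Young's inequality with exponents $p,q,3p$ satisfying $1+\tfrac{1}{3p}=\tfrac{1}{p}+\tfrac{1}{q}$, so that $q=\tfrac{3p}{3p-2}\in(1,\tfrac{3}{2})$, followed by H\"older's inequality on the product, I would obtain
$$\|w*(\psi_1\psi_2)\|_{W^{s,3p}}\;\lesssim\;\|w\|_{W^{s,p}}\,\|\psi_1\psi_2\|_{L^q}\;\lesssim\;\|w\|_{W^{s,p}}\,\|\psi_1\|_{L^{2q}}\|\psi_2\|_{L^{2q}}\,.$$

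The remaining ingredient is the embedding $\widetilde{H}_\alpha^s(\mathbb{R}^3)\hookrightarrow L^{2q}(\mathbb{R}^3)$ with $2q=\tfrac{6p}{3p-2}\in(2,3)$. By Theorem \ref{thm:domain_s}(ii), every $\psi\in\widetilde{H}_\alpha^s(\mathbb{R}^3)$ decomposes uniquely as $\psi=\phi+\kappa G_\lambda$ with $\|\phi\|_{H^s}+(1+\alpha)|\kappa|\approx\|\psi\|_{\widetilde{H}_\alpha^s}$, so it suffices to bound the regular part $\phi$ and the singular part $\kappa G_\lambda$ separately in $L^{2q}$. The Sobolev embedding $H^s(\mathbb{R}^3)\hookrightarrow L^{2q}(\mathbb{R}^3)$ is available because $2q<3<6/(3-2s)$ for any $s\in(\tfrac{1}{2},\tfrac{3}{2})$ and $p>2$. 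The Green function $G_\lambda$ belongs to $L^r(\mathbb{R}^3)$ exactly for $r<3$, since $G_\lambda(x)\sim|x|^{-1}$ near the origin and decays exponentially at infinity; in particular $2q<3$ yields $G_\lambda\in L^{2q}$.

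The main technical point to verify is the compatibility of all these exponents in one common window. It is pleasant that both the Sobolev embedding on the $H^s$ component and the integrability of the Green function on the singular component rest on the same inequality $2q<3$, which is automatic from our standing assumption $p>2$. Apart from this bookkeeping, the argument uses only Young's and H\"older's inequalities, Sobolev embeddings, and the structural characterisation of $\widetilde{H}_\alpha^s$ from Theorem \ref{thm:domain_s}; no further tool is needed.
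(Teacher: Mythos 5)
Your proof is correct and follows essentially the same route as the paper's: Sobolev embedding $W^{s,3p}\hookrightarrow L^\infty$ for the first bound, commuting the Bessel potential with the convolution and then applying Young's and H\"older's inequalities, and establishing $\widetilde{H}^s_\alpha(\mathbb{R}^3)\hookrightarrow L^{6p/(3p-2)}(\mathbb{R}^3)$ via the decomposition of Theorem \ref{thm:domain_s}(ii), the embedding $H^s\hookrightarrow L^{6p/(3p-2)}$, the integrability of $G_\lambda$ (exponent below $3$), and the norm equivalence \eqref{eq:equiv_of_norms_s1232}. All exponent checks are accurate, so there is nothing to add.
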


\begin{proof}
	The first inequality in \eqref{eq:convu} is due to Sobolev's embedding
	\[
	W^{s,3p}(\mathbb{R}^3)\;\hookrightarrow\;L^\infty(\mathbb{R}^3)\,.
	\]
	For the second inequality, let us observe preliminarily that
	\begin{equation*}
	\widetilde{H}^s_\alpha(\mathbb{R}^3)\;\hookrightarrow\;L^{\frac{6p}{3p-2}}(\R^3)\,. 
	\end{equation*}

	Indeed, decomposing by means of \eqref{eq:Ds_s1/2-3/2} a generic $\psi\in\widetilde{H}^s_\alpha(\mathbb{R}^3)$ as $\psi=\phi_\lambda+\kappa_\lambda\,G_\lambda$ for some  $\phi_\lambda\in H^s(\R^3)$ and some $\kappa_\lambda\in\C$, one has 
	\[
	\begin{split}
	\|\psi\|_{L^{\frac{6p}{3p-2}}}\;&\leqslant\; \|\phi_\lambda\|_{L^{\frac{6p}{3p-2}}}+|\kappa_\lambda|\,\|G_\lambda\|_{L^{\frac{6p}{3p-2}}} \;\lesssim\; \|\phi_\lambda\|_{H^s}+|\kappa_\lambda|\;\approx\; \|\psi\|_{\widetilde{H}^s_\alpha}\,,
	\end{split}
	\]
	the second step following from Sobolev's embedding $H^s(\R^3)\hookrightarrow L^{\frac{6p}{3p-2}}(\R^3)$  and from $G_\lambda\in L^{\frac{6p}{3p-2}}(\R^3)$, because  $\frac{6p}{3p-2}\in[2,3)$ for $p\in(2,+\infty)$, the last step being the norm equivalence \eqref{eq:equiv_of_norms_s1232}. Therefore Young's inequality yields
	\begin{equation*}
	\begin{aligned}
	\|w*(\psi_1\psi_2)\|_{W^{s,3p}}\;&\approx\;\|(\mathbbm{1}-\Delta)^{\frac s2}(w*(\psi_1\psi_2))\|_{L^{3p}}\;=\;\|((\mathbbm{1}-\Delta)^{\frac s2}w)*(\psi_1\psi_2)\|_{L^{3p}}\\
	&\lesssim \;\|(\mathbbm{1}-\Delta)^{\frac s2}w\|_{L^p}\:\|\psi_1\|_{L^{\frac{6p}{3p-2}}}\,\|\psi_2\|_{L^{\frac{6p}{3p-2}}} \\
	&\lesssim\;\|w\|_{W^{s,p}(\R^3)}\|\psi_1\|_{ \widetilde{H}^s_\alpha(\mathbb{R}^3)}\|\psi_2\|_{ \widetilde{H}^s_\alpha(\mathbb{R}^3)}\,,
	\end{aligned}
	\end{equation*}
	thus proving \eqref{eq:convu}.
\end{proof}

\begin{lemma}\label{produ}
	Let $\alpha\geqslant 0$ and $s\in(\frac{1}{2},\frac{3}{2})$. Let $h\in W^{s,q}(\mathbb{R}^3)$ for $q\in(6,+\infty)$. Then $h\psi\in \widetilde{H}^s_\alpha(\mathbb{R}^3)$ for each $\psi\in \widetilde{H}^s_\alpha(\mathbb{R}^3)$ and
	\begin{equation}
	\|h\,\psi\|_{\widetilde{H}^s_\alpha(\mathbb{R}^3)}\;\lesssim\;\|h\|_{W^{s,q}(\R^3)}\:\|\psi\|_{\widetilde{H}^s_\alpha(\mathbb{R}^3)}\,.
	\end{equation}
\end{lemma}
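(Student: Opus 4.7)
The plan is to use the structure of $\widetilde{H}^s_\alpha(\R^3)$ given by Theorem \ref{thm:domain_s}(ii), namely to decompose $\psi=\phi_\lambda+\kappa_\lambda G_\lambda$ with $\phi_\lambda\in H^s(\R^3)$ and $\kappa_\lambda\in\C$, and then to produce an analogous decomposition of $h\psi$ by isolating the singular part with the ``subtract the value at the origin'' trick. Concretely, I would write
\[
h\psi\;=\;\bigl(h\phi_\lambda+\kappa_\lambda(h-h(0))G_\lambda\bigr)+\kappa_\lambda\,h(0)\,G_\lambda\,,
\]
identifying the parenthesis as an $H^s$-element and $\kappa_\lambda h(0)$ as the new coefficient in front of $G_\lambda$. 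The norm equivalence \eqref{eq:equiv_of_norms_s1232} then reduces the lemma to two bounds: one for the $H^s$-norm of $h\phi_\lambda+\kappa_\lambda(h-h(0))G_\lambda$, and one for the quantity $(1+\alpha)|\kappa_\lambda h(0)|$.

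First I would control $\|h\phi_\lambda\|_{H^s}$ by the fractional Leibniz rule (Theorem \ref{KatoPonce}) together with the Sobolev embeddings $W^{s,q}(\R^3)\hookrightarrow L^\infty(\R^3)$ and $H^s(\R^3)\hookrightarrow L^{\frac{2q}{q-2}}(\R^3)$, which hold because $s>\frac{1}{2}>\frac{3}{q}$ and $q>6$. This gives
\[
\|h\phi_\lambda\|_{H^s}\;\lesssim\;\|h\|_{W^{s,q}}\|\phi_\lambda\|_{H^s}\,.
\]
Next I would control the singular piece: the $L^2$-bound on $(h-h(0))G_\lambda$ is immediate from $\|h\|_{L^\infty}\lesssim\|h\|_{W^{s,q}}$ and $G_\lambda\in L^2(\R^3)$, while the $\dot H^s$-bound
\[
\big\|\mathcal{D}^s\bigl((h-h(0))G_\lambda\bigr)\big\|_{L^2}\;\lesssim\;\|h\|_{W^{s,q}}
\]
is exactly the content of Proposition \ref{tst}. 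Combining these two, the $H^s$-norm of $h\phi_\lambda+\kappa_\lambda(h-h(0))G_\lambda$ is bounded by $\|h\|_{W^{s,q}}(\|\phi_\lambda\|_{H^s}+|\kappa_\lambda|)$.

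For the singular coefficient, the trivial estimate $|h(0)|\leqslant\|h\|_{L^\infty}\lesssim\|h\|_{W^{s,q}}$ yields
\[
(1+\alpha)|\kappa_\lambda h(0)|\;\lesssim\;(1+\alpha)|\kappa_\lambda|\,\|h\|_{W^{s,q}}\,.
\]
Summing the two contributions and invoking \eqref{eq:equiv_of_norms_s1232} twice (once for $\psi$ and once for $h\psi$) produces
\[
\|h\psi\|_{\widetilde{H}^s_\alpha}\;\lesssim\;\|h\|_{W^{s,q}}\bigl(\|\phi_\lambda\|_{H^s}+(1+\alpha)|\kappa_\lambda|\bigr)\;\approx\;\|h\|_{W^{s,q}}\|\psi\|_{\widetilde{H}^s_\alpha}\,.
\]
The main obstacle is genuinely the product $h\cdot G_\lambda$, because $G_\lambda\notin H^s(\R^3)$ for $s>\frac{1}{2}$ and a direct Leibniz estimate would fail; the decomposition $hG_\lambda=(h-h(0))G_\lambda+h(0)G_\lambda$ is what turns this into a statement about the $H^s$-regular part plus a multiple of $G_\lambda$, and Proposition \ref{tst} is precisely the tool tailored to handle the subtracted piece.
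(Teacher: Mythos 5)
Your proposal is correct and follows essentially the same route as the paper: the same splitting $h\psi=\bigl(h\phi_\lambda+\kappa_\lambda(h-h(0))G_\lambda\bigr)+\kappa_\lambda h(0)G_\lambda$, the fractional Leibniz rule with the embeddings $W^{s,q}\hookrightarrow L^\infty$ and $H^s\hookrightarrow L^{\frac{2q}{q-2}}$ for the product $h\phi_\lambda$, Proposition \ref{tst} for the subtracted piece, and the norm equivalence \eqref{eq:equiv_of_norms_s1232} to close the estimate. Your explicit tracking of the $(1+\alpha)$ factor on the singular coefficient is a slightly more careful bookkeeping of the same argument.
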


\begin{proof}
	Let us decompose  $\psi\in  \widetilde{H}^s_\alpha(\mathbb{R}^3)$ as  $\psi=\phi_\lambda+\kappa_\lambda G_\lambda$ for some $\phi_\lambda\in H^s(\R^3)$ and $\kappa_\lambda\in\C$, according to \eqref{eq:Ds_s1/2-3/2}. 
	On the other hand, by the embedding \eqref{eq:SobEmb-I} the function $h$ is continuous and
	$|h(0)|\leqslant\|h\|_{L^\infty(\mathbb{R}^3)}\lesssim\|h\|_{W^{s,q}(\R^3)}$. Thus,
	\begin{equation*}\tag{i}
	h\,g\;=\;h\,\phi_\lambda+\kappa_\lambda\,(h-h(0))\,G_\lambda+\kappa_\lambda\,h(0)\,G_\lambda\,.
	\end{equation*}
	Applying the fractional Leibniz rule \eqref{eq:KatoPonce} and using Sobolev's embedding,
	\begin{equation*}\tag{ii}
	\begin{aligned}
	\|h\,\phi_\lambda\|_{H^s}\;&\approx\;\|(\mathbbm{1}-\Delta)^{\frac s2}(h\,\phi_\lambda)\|_{L^2} \\
	&\lesssim\; \|(\mathbbm{1}-\Delta)^{\frac s2}h\|_{L^q}\,\|\phi_\lambda\|_{L^{\frac{2q}{q-2}}}+\|h\|_{L^{\infty}}\|(\mathbbm{1}-\Delta)^{\frac s2}\phi_\lambda\|_{L^2} \\
	&\lesssim\;\|h\|_{W^{s,q}}\|\phi_\lambda\|_{H^s}\,.
	\end{aligned}
	\end{equation*}
	Moreover, since $G_\lambda\in L^2(\R^3)$,
	$$\|(h-h(0))G_\lambda\|_{L^2}\;\lesssim\;\|h-h(0)\|_{L^{\infty}}\;\lesssim\;\|h\|_{W^{s,q}}\,;$$
	this, together with the estimate \eqref{tecre}, gives
	\begin{equation*}\tag{iii}
	\|\kappa_\lambda\,(h-h(0))G_\lambda\|_{H^s}\;\lesssim\; |\kappa_\lambda|\,\|h\|_{W^{s,q}}\,.
	\end{equation*}
	The bounds (ii) and (iii) imply that $h\psi$ is the sum of the function $h\phi_\lambda+\kappa_\lambda(h-h(0))G_\lambda\in H^s(\mathbb{R}^3)$ and of the multiple $\kappa_\lambda h(0)G_\lambda$ of $G_\lambda$: as such, owing to \eqref{eq:Ds_s1/2-3/2}, $h\psi$ belongs to $\widetilde{H}^s_\alpha(\mathbb{R}^3)$ and its $\widetilde{H}^s_\alpha$-norm is estimated, according to the norm equivalence \eqref{eq:equiv_of_norms_s1232}, by
	\[
	\begin{split}
	\|h\,\psi\|_{\widetilde{H}^s_\alpha(\mathbb{R}^3)}\;&\approx\;\|h\,\phi_\lambda+\kappa_\lambda\,(h-h(0))\,G_\lambda\|_{\widetilde{H}^s_\alpha}+|\kappa_\lambda|\,|h(0)| \\
	&\lesssim\;\|h\|_{W^{s,q}}(\|\phi_\lambda\|_{H^s}+|\kappa_\lambda|)+ |\kappa_\lambda|\,\|h\|_{W^{s,q}} \\
	&\approx\;\|h\|_{W^{s,q}}\,\|\psi\|_{\widetilde{H}^s_\alpha}\,,
	\end{split}
	\]
	which completes the proof.
\end{proof}

Combining Lemmas \ref{convu} and \ref{produ} one therefore has the trilinear estimate
\begin{equation}\label{eq:LaL}
\|(w*(u_1u_2))u_3\|_{\widetilde{H}_\alpha^s(\mathbb{R}^3) }\;\lesssim\;\|w\|_{W^{s,p}(\R^3)}\prod_{j=1}^3\|u_j\|_{\widetilde{H}_\alpha^s(\mathbb{R}^3) }\,.
\end{equation}

Let us now prove Theorem \ref{thm:high_reg}.

\begin{proof}[Proof of Theorem \ref{thm:high_reg}]~
	
	From the expression \eqref{eq:Smap} for the solution map $\Phi(u)$ and from the bound \eqref{eq:LaL}  one finds
	\[\tag{i}
	\begin{split}
	\|\Phi(u)\|_{L^\infty\widetilde{H}_\alpha^s }\;&\leqslant\;\|f\|_{\widetilde{H}_\alpha^s }+T\,\|(w*|u|^2)u\|_{L^\infty\widetilde{H}_\alpha^s } \\
	&\leqslant\;\|f\|_{\widetilde{H}_\alpha^s }+C_1\,T\,\|w\|_{W^{s,p}}\,\|u\|^3_{L^{\infty}\widetilde{H}^s_\alpha}
	\end{split}
	\]
	for some constant $C_1>0$.
	
	Moreover,
	\begin{equation*}\tag{ii}
	\begin{split}
	\|\Phi(u&)-\Phi(v)\|_{L^\infty L^2 }\;\leqslant\;T\,\|(w*|u|^2)u-(w*|v|^2)v\|_{L^\infty L^2 } \\
	&\lesssim\;T\,\big( \|(w*|u|^2)(u-v)\|_{L^\infty L^2 } + \big\|\big(w*(|u|^2-|v|^2\big) v\big\|_{L^\infty L^2 }\big) \,.
	\end{split} 
	\end{equation*}
	For the first summand in the r.h.s.~above estimate \eqref{eq:convu} and H\"older's inequality yield
	\begin{equation*}\tag{iii}
	\begin{split}
	\|(w*|u|^2)(u-v)\|_{L^\infty L^2 }\;&\leqslant\; \|w*|u|^2\|_{L^{\infty}L^{\infty}}\,\|u-v\|_{L^{\infty}L^2}\\
	&\lesssim\; \|w\|_{W^{s,p}}\|u\|^2_{L^{\infty}\widetilde{H}_{\alpha}^s}\,\|u-v\|_{L^{\infty}L^2}\,.
	\end{split}
	\end{equation*}
	For the second summand, let us observe preliminarily that
	\begin{equation*}\tag{iv}
	\widetilde{H}^s_\alpha(\mathbb{R}^3)\;\hookrightarrow\;L^{3,\infty}(\R^3)\,. 
	\end{equation*}
	Indeed, decomposing by means of \eqref{eq:Ds_s1/2-3/2} a generic $\psi\in\widetilde{H}^s_\alpha(\mathbb{R}^3)$ as $\psi=\phi_\lambda+\kappa_\lambda\,G_\lambda$ for some  $\phi_\lambda\in H^s(\R^3)$ and some $\kappa_\lambda\in\C$, one has 
	\[
	\begin{split}
	\|\psi\|_{L^{3,\infty}}\;&\leqslant\; \|\phi_\lambda\|_{L^{3,\infty}}+|\kappa_\lambda|\,\|G_\lambda\|_{L^{3,\infty}} \lesssim \|\phi_\lambda\|_{H^s}+|\kappa_\lambda|\;\approx\; \|\psi\|_{\widetilde{H}^s_\alpha}\,,
	\end{split}
	\]
	the second step following from the Sobolev's embedding $H^s(\R^3)\hookrightarrow L^{3}(\R^3)$, the last step being the norm equivalence \eqref{eq:equiv_of_norms_s1232}.
	Then (iv) above, Sobolev's embedding $W^{s,p}(\R^3)\hookrightarrow L^3(\R^3)$, and an application of Holder's and Young's inequality in Lorentz spaces, yield
	\begin{equation*}\tag{v}
	\begin{split}
	\|(w*(|u|^2\!-\!|v|^2))&v\|_{L^\infty L^2 }\;\leqslant\;\|w*(|u|^2\!-\!|v|^2)\|_{L^{\infty}L^{6,2}}\,\|v\|_{L^{\infty}L^{3,\infty}}\\
	&\lesssim\; \|w\|_{L^3}\,\|u+v\|_{L^{\infty}L^{3,\infty}}\,\|u-v\|_{L^{\infty}L^2}\,\|v\|_{L^{\infty}L^{3,\infty}}\\
	&\lesssim\; \|w\|_{W^{s,p}}\,\|u+v\|_{L^{\infty}\widetilde{H}_{\alpha}^s}\,\|u-v\|_{L^{\infty}L^2}\,\|v\|_{L^{\infty}\widetilde{H}_{\alpha}^s}\,.
	\end{split}
	\end{equation*}
	Thus, (ii), (iii), and (v) together give
	\[\tag{vi}
	\begin{split}
	d(\Phi(u),\Phi(v))\;&\leqslant\;C_2\,T\,\|w\|_{W^{s,p}}\,\big(\,\|u\|^2_{L^\infty \widetilde{H}_\alpha^s }+\|v\|^2_{L^\infty \widetilde{H}_\alpha^s} \big)\,d(u,v)
	\end{split}
	\]
	for some constant $C_2>0$. 
	
	Now, setting $C:=\max\{C_1,C_2\}$ and choosing $T$ and $M$ such that
	\[
	M\;=\;2\,\|f\|_{\widetilde{H}_\alpha^s}\,,\qquad T\;=\;{\textstyle\frac{1}{4}}\,\big(C\,M^2 \|w\|_{W^{s,p}}\big)^{-1}\,,
	\]
	estimate (i) reads $ \|\Phi(u)\|_{L^\infty \widetilde{H}_\alpha^s}\leqslant M$ and shows that $\Phi$ maps the space $\mathcal{X}_{T,M}$ defined in \eqref{eq:fixed_point_space} into itself, whereas estimate (vi) reads $d(\Phi(u),\Phi(v))\leqslant\frac{1}{2}d(u,v)$ and shows that $\Phi$ is a contraction on $\mathcal{X}_{T,M}$. By Banach's fixed point theorem, there exists a unique fixed point $u\in \mathcal{X}_{T,M}$ of $\Phi$ and hence a unique solution $u\in \mathcal{X}_{T,M}$ to \eqref{eq:integral_formula_Duhamel}, which is therefore also continuous in time.

	Furthermore, by a standard continuation argument we can extend such a solution over a maximal interval for which the blow-up alternative holds  true. Also the continuous dependence on the initial data is a direct consequence of the fixed point argument.
\end{proof}

A straightforward consequence of Theorem \ref{thm:high_reg} when $s=1$ concerns the differential meaning of the local strong solution determined so far.

\begin{corollary}[Integral and differential formulation]\label{soddire}
	Let $\alpha\geqslant 0$. For given $w\in W^{1,p}(\mathbb{R}^3)$, $p\in(2,+\infty)$, and $f\in \widetilde{H}^1_\alpha(\mathbb{R}^3)$, let  $u$ be the unique solution in the class $\mathcal{C}([-T,T],\widetilde{H}^{1}_\alpha(\mathbb{R}^3))$ to the integral equation \eqref{eq:integral_formula_Duhamel} in the interval $[-T,T]$ for some $T>0$, as given by Theorem \ref{thm:high_reg}.
	Then $u(0)=f$ and $u$ satisfies the differential equation \eqref{eq:sing_Hartree}
	as an identity between $\widetilde{H}^{-1}_\alpha$-functions, $\widetilde{H}^{-1}_\alpha(\mathbb{R}^3)$ being the topological dual of $\widetilde{H}^{1}_\alpha(\mathbb{R}^3)$. 
\end{corollary}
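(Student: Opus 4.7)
The initial condition $u(0) = f$ is immediate by evaluating \eqref{eq:integral_formula_Duhamel} at $t=0$, so the substance of the statement lies in recovering the differential equation \eqref{eq:sing_Hartree} from the Duhamel identity, interpreted in the dual space $\widetilde{H}^{-1}_\alpha(\mathbb{R}^3)$. My plan is to differentiate \eqref{eq:integral_formula_Duhamel} term by term and to justify the two resulting pieces using, respectively, the spectral calculus for $-\Delta_\alpha$ and Duhamel's principle for the inhomogeneous linear equation.

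The first preliminary step is to record that the map $t \mapsto N(u(t)) := (w * |u(t)|^2)u(t)$ belongs to $\mathcal{C}([-T,T], \widetilde{H}^{1}_\alpha(\mathbb{R}^3))$. Boundedness in $\widetilde{H}^{1}_\alpha$ is an immediate consequence of the trilinear estimate \eqref{eq:LaL} applied with $s=1$, and continuity in $t$ follows by writing the difference $N(u(t))-N(u(t'))$ as a sum of three terms, each of which is estimated by \eqref{eq:LaL} (or by the cubic inequality established in the proof of Theorem \ref{thm:high_reg}) and then uses $u \in \mathcal{C}([-T,T], \widetilde{H}^{1}_\alpha)$. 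In particular $N(u) \in \mathcal{C}([-T,T], L^2(\mathbb{R}^3))$, which is all that will really be used downstream.

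For the linear part, since $f \in \widetilde{H}^{1}_\alpha(\mathbb{R}^3) = \mathcal{D}((-\Delta_\alpha)^{1/2})$ and the propagator $e^{\ii t\Delta_\alpha}$ commutes with $-\Delta_\alpha$ by functional calculus, $t \mapsto e^{\ii t \Delta_\alpha} f$ is a $\mathcal{C}^1$ curve into $\widetilde{H}^{-1}_\alpha$ with derivative $\ii\Delta_\alpha e^{\ii t\Delta_\alpha} f$, where $\Delta_\alpha : \widetilde{H}^{1}_\alpha \to \widetilde{H}^{-1}_\alpha$ is the continuous extension of the self-adjoint operator by duality. For the Duhamel term, I factor
\[
-\ii \int_0^t e^{\ii(t-\tau)\Delta_\alpha} N(u(\tau))\,\ud\tau \;=\; -\ii\, e^{\ii t\Delta_\alpha} \!\int_0^t e^{-\ii\tau\Delta_\alpha} N(u(\tau))\,\ud\tau\,,
\]
and differentiate the two factors separately: the inner integral, being the Bochner integral in $L^2$ of a continuous integrand, is $\mathcal{C}^1$ in $t$ with $L^2$-derivative $e^{-\ii t\Delta_\alpha} N(u(t))$, while the exterior propagator contributes $\ii\Delta_\alpha$ acting on the whole integral, which lives in $\widetilde{H}^{1}_\alpha$ since $N(u(\tau))$ does. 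Using the Leibniz rule in $\widetilde{H}^{-1}_\alpha$ (valid once one of the two factors is $\mathcal{C}^1$ into $L^2$ and the other is $\mathcal{C}^0$ into $\widetilde{H}^{1}_\alpha$) one obtains
\[
\partial_t u(t) \;=\; \ii\Delta_\alpha u(t) \;-\; \ii N(u(t)) \qquad \text{in } \widetilde{H}^{-1}_\alpha(\mathbb{R}^3)\,,
\]
which is precisely \eqref{eq:sing_Hartree} after multiplying by $-\ii$.

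The only delicate point I expect is the rigorous justification of differentiating under the integral in the Duhamel factorisation; the rest is essentially bookkeeping. Once one commits to interpreting $\Delta_\alpha$ as the continuous extension $\widetilde{H}^{1}_\alpha \to \widetilde{H}^{-1}_\alpha$ (equivalently, as the adjoint of the self-adjoint $-\Delta_\alpha$ restricted to $\widetilde{H}^{1}_\alpha$), and one uses the continuity of $N(u)$ with values in $L^2$ together with the strong continuity of the propagator on $L^2$ and on $\widetilde{H}^{\pm 1}_\alpha$, the argument reduces to standard Duhamel manipulations. No new estimate beyond \eqref{eq:LaL} and the functional calculus for $-\Delta_\alpha$ is needed.
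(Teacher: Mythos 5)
Your proposal is correct and follows essentially the same route as the paper: the key input in both cases is the trilinear estimate \eqref{eq:LaL} with $s=1$, which gives the required continuity of the map $u\mapsto(w*|u|^2)u$ on $\widetilde{H}^1_\alpha(\mathbb{R}^3)$ (hence of $t\mapsto (w*|u(t)|^2)u(t)$), after which the paper simply cites the standard semigroup facts from \cite[Section 1.6]{cazenave} that you spell out explicitly (differentiating the Duhamel formula, with $\Delta_\alpha$ extended as a bounded map $\widetilde{H}^1_\alpha\to\widetilde{H}^{-1}_\alpha$). Your explicit factorisation and Leibniz-rule argument is a correct unpacking of that citation, so there is no substantive difference.
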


\begin{proof}
	The bound \eqref{eq:LaL} shows that the non-linearity defines a map $u\mapsto (w*|u|^2)u$ that is continuous from $\widetilde{H}^{1}_\alpha(\mathbb{R}^3)$ into itself, and hence in particular it is continuous from $\widetilde{H}^{1}_\alpha(\mathbb{R}^3)$ to $\widetilde{H}^{-1}_\alpha(\mathbb{R}^3)$. Then the thesis follows by standard facts of the theory of linear semi-groups (see \cite[Section 1.6]{cazenave}).
\end{proof}

For later purposes, let us conclude this Section with the following stability result.

\begin{proposition}\label{stabint}
	Let $\alpha\geqslant 0$ and $s\in(\frac{1}{2},\frac{3}{2})$. For given $w\in W^{s,p}(\mathbb{R}^3)$, $p\in(2,+\infty)$, and $f\in \widetilde{H}^{s}_\alpha(\mathbb{R}^3)$, let $u$ be the unique strong $\widetilde{H}^{s}_\alpha$-solution to the Cauchy problem \eqref{eq:Cauchy_problem_sing_Hartree} in the maximal interval $(-T_*,T^*)$. Consider moreover the sequences $(w_n)_n$ and $(f_n)_n$ of potentials and initial data such that $w_n\xrightarrow[]{n\to+\infty} w$ in $W^{s,p}(\mathbb{R}^3)$ and $f_n\xrightarrow[]{n\to+\infty} f$ in $\widetilde{H}^{s}_\alpha(\mathbb{R}^3)$. Then there exists a time $T:=T(\|w\|_{W^{s,p}},\|f\|_{\widetilde{H}^{s}_\alpha})>0$, with $[-T,T]\subset(T_*,T^*)$, such that, for sufficiently large $n$, the Cauchy problem \eqref{eq:Cauchy_problem_sing_Hartree} with potential $w_n$ and initial data $f_n$ admits a unique strong $\widetilde{H}^{s}_\alpha$-solution $u_n$ in the interval $[-T,T]$. Moreover,
	\begin{equation}\label{stabconvpa}
	u_n\xrightarrow[]{n\to+\infty} u\qquad\textrm{in  }\;\;\mathcal{C}([-T,T],\widetilde{H}^{s}_\alpha(\R^3))\,.
	\end{equation}
\end{proposition}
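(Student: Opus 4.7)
The plan is to mimic the structure of Proposition \ref{stabint_mass}, with the $L^2$--based estimates replaced by $\widetilde{H}^s_\alpha$--based estimates, and with the crucial trilinear bound \eqref{eq:LaL} playing the role of H\"older plus Young. First, by Theorem \ref{thm:high_reg} each pair $(w_n,f_n)$ generates a unique strong $\widetilde{H}^s_\alpha$--solution $u_n$ on some maximal interval $(-T_{*,n},T^*_n)$. Because the local existence time in Theorem \ref{thm:high_reg} depends monotonically on $\|w\|_{W^{s,p}}$ and $\|f\|_{\widetilde{H}^s_\alpha}$, and since these two quantities converge, one can fix a single $T=T(\|w\|_{W^{s,p}},\|f\|_{\widetilde{H}^s_\alpha})>0$, with $[-T,T]\subset(-T_*,T^*)$, such that $T\le \min\{T^*_n,T_{*,n}\}$ for $n$ large. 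Moreover, an elementary a~priori bound obtained by iterating step (i) of the proof of Theorem \ref{thm:high_reg} gives a uniform control $\sup_n\|u_n\|_{L^\infty([-T,T],\widetilde{H}^s_\alpha)}\le 2M$ with $M=2\|f\|_{\widetilde{H}^s_\alpha}$.

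Next, set $\phi_n:=u-u_n$. Subtracting the Duhamel formulas for $u$ and $u_n$ yields
\[
\phi_n(t)\;=\;e^{\ii t\Delta_\alpha}(f-f_n)-\ii\int_0^t e^{\ii(t-\tau)\Delta_\alpha}\,\mathcal{N}_n(\tau)\,\ud\tau\,,
\]
where
\[
\mathcal{N}_n\;=\;\big((w-w_n)*|u|^2\big)u+(w_n*|u|^2)\phi_n+\big(w_n*(\overline{u}\,\phi_n+\overline{\phi_n}\,u_n)\big)u_n\,.
\]
Since $t\mapsto e^{\ii t\Delta_\alpha}$ is unitary on $\widetilde{H}^s_\alpha(\mathbb{R}^3)$ (it commutes with $(\mathbbm{1}-\Delta_\alpha)^{s/2}$ by functional calculus), taking the $\widetilde{H}^s_\alpha$--norm and using the trilinear estimate \eqref{eq:LaL} on each of the three summands yields, after integrating on $[-T,T]$ and passing to the supremum in $t$,
\[
\begin{split}
\|\phi_n\|_{L^\infty\widetilde{H}^s_\alpha}\;&\lesssim\;\|f-f_n\|_{\widetilde{H}^s_\alpha}+T\,\|w-w_n\|_{W^{s,p}}\|u\|^3_{L^\infty\widetilde{H}^s_\alpha}\\
&\quad+T\,\|w_n\|_{W^{s,p}}\big(\|u\|^2_{L^\infty\widetilde{H}^s_\alpha}+\|u_n\|^2_{L^\infty\widetilde{H}^s_\alpha}\big)\|\phi_n\|_{L^\infty\widetilde{H}^s_\alpha}\,.
\end{split}
\]
Thanks to the uniform bounds on $\|w_n\|_{W^{s,p}}$, $\|u\|_{L^\infty\widetilde{H}^s_\alpha}$, $\|u_n\|_{L^\infty\widetilde{H}^s_\alpha}$, shrinking $T$ if needed (again only in terms of $\|w\|_{W^{s,p}}$ and $\|f\|_{\widetilde{H}^s_\alpha}$), the last summand is absorbed on the left, leaving
\[
\|\phi_n\|_{L^\infty([-T,T],\widetilde{H}^s_\alpha)}\;\lesssim\;\|f-f_n\|_{\widetilde{H}^s_\alpha}+\|w-w_n\|_{W^{s,p}}\;\xrightarrow[n\to+\infty]{}\;0\,.
\]
Since both $u$ and $u_n$ belong to $\mathcal{C}([-T,T],\widetilde{H}^s_\alpha)$, the uniform convergence just established upgrades to convergence in $\mathcal{C}([-T,T],\widetilde{H}^s_\alpha)$, as claimed.

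The only point requiring care is the verification of the unitarity of $e^{\ii t\Delta_\alpha}$ on $\widetilde{H}^s_\alpha$: this is immediate from the spectral theorem applied to the self-adjoint operator $-\Delta_\alpha\geqslant 0$ together with the definition \eqref{eq:Hsperturbed}--\eqref{eq:Hsperturbed-norm}, so no serious new obstacle appears. The true workhorse of the argument is the trilinear bound \eqref{eq:LaL}, whose uniform-in-$n$ applicability relies essentially on the fact that in the intermediate regime $s\in(\frac12,\frac32)$ the constant in \eqref{eq:equiv_of_norms_s1232} is unaffected by the iteration (it depends only on $\alpha$, which is fixed). Thus the proof is a clean parallel to Proposition \ref{stabint_mass}, with the dispersive/Strichartz machinery of the mass case replaced throughout by the algebraic bound \eqref{eq:LaL}.
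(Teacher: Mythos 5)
Your proposal is correct and follows essentially the same route as the paper's proof: the same Duhamel-difference decomposition into the three summands, the same application of the trilinear bound \eqref{eq:LaL} together with the uniform-in-$n$ bounds on $\|w_n\|_{W^{s,p}}$ and $\|u_n\|_{L^\infty\widetilde{H}^s_\alpha}$, and the same absorption of the $\|\phi_n\|$-term by shrinking $T$. Your explicit check that $e^{\ii t\Delta_\alpha}$ is an isometry on $\widetilde{H}^s_\alpha(\R^3)$ is a detail the paper leaves implicit, but it changes nothing of substance.
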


\begin{proof}
	As a consequence of  Theorem \ref{thm:high_reg}, there exist an interval $[-T_n,T_n]$ for some $T_n:=T_n(\|w_n\|_{W^{s,p}},\|f_n\|_{\widetilde{H}^{s}_\alpha})>0$ and a unique $u_n\in\mathcal{C}([-T_n,T_n],\widetilde{H}^{1}_\alpha(\mathbb{R}^3))$ such that
	\begin{equation*}\tag{*}
	u_n(t)\;=\;e^{\ii t\Delta_\alpha} f_n-\ii\int_0^t e^{\ii (t-\tau)\Delta_\alpha}(w_n*|u_n(\tau)|^2)u_n(\tau)\,\ud\tau\,.
	\end{equation*}
	Since $\|w_n\|_{W^{s,p}}$ and $\|f_n\|_{\widetilde{H}^{s}_\alpha}$ are asymptotically close, respectively, to $\|w\|_{W^{s,p}}$ and $\|f\|_{\widetilde{H}^{s}_\alpha}$, then there exists $T:=T(\|w\|_{W^{s,p}},\|f\|_{\widetilde{H}^{s}_\alpha})$ such that $T\leqslant T_n$ eventually in $n$, which means that $u_n$ is defined on  $[-T,T]$. Let us set $\phi_n:=u-u_n$. By assumption $u$ solves \eqref{eq:integral_formula_Duhamel}, thus subtracting (*) from \eqref{eq:integral_formula_Duhamel} yields
	\begin{equation*}
	\begin{aligned}
	\phi_n\;&=\;e^{\ii t\Delta_\alpha} (f-f_n)-\ii\int_0^t e^{\ii (t-\tau)\Delta_\alpha}\big((w*|u|^2)u-(w_n*|u_n|^2)u_n\big)(\tau)\,\ud\tau\\
	&=\;e^{\ii t\Delta_\alpha} (f-f_n)-\ii\int_0^t e^{\ii (t-\tau)\Delta_\alpha}\Big\{\big((w-w_n)*|u|^2\big)u+(w_n*|u|^2)\phi_n\\
	&\qquad\qquad\qquad\qquad\qquad\qquad\qquad +\big(w_n*(\overline{u}\phi_n+\overline{\phi_n}u_n) \big)u_n\Big\}(\tau)\,\ud\tau\,.
	\end{aligned}
	\end{equation*}
	From the above identity, taking the $\widetilde{H}^{s}_\alpha$-norm of $\phi_n$ boils down to repeatedly applying the estimate \eqref{eq:LaL} to the summands in the integral on the r.h.s., thus yielding 
	\[
	\begin{split}
	\|\phi_n\|_{L^{\infty}\widetilde{H}^{s}_\alpha}\;&\lesssim\;\|f-f_n\|_{\widetilde{H}^{s}_\alpha} +T\,\|w-w_n\|_{W^{s,p}}\,\|u\|^3_{L^{\infty}\widetilde{H}^{s}_\alpha} \\
	&\qquad +T\,\|w_n\|_{W^{s,p}}\big(\,\|u\|^2_{L^{\infty}\widetilde{H}^{s}_\alpha}+\|u_n\|^2_{L^{\infty}\widetilde{H}^{s}_\alpha}\,\big)\,\|\phi_n\|_{L^{\infty}\widetilde{H}^{s}_\alpha}\,.
	\end{split}
	\]
	Since by assumption $\|w_n\|_{W^{s,p}}$ and $\|u_n\|^2_{L^{\infty}\widetilde{H}^{s}_\alpha}$ are bounded uniformly in $n$, then the above inequality implies, decreasing further $T$ if needed,
	\[
	\|\phi_n\|_{L^{\infty}\widetilde{H}^{s}_\alpha}\;\lesssim\;\|f-f_n\|_{\widetilde{H}^{s}_\alpha}+\|w-w_n\|_{W^{s,p}}\;\xrightarrow[]{n\to+\infty}\;0\,,
	\]
	which completes the proof.
\end{proof}

\section{High regularity Theory}\label{sec:higher_reg}

In this Section we prove Theorem \ref{highh} for the regime $s\in(\frac{3}{2},2]$. The approach is again a contraction argument, that we now set in the spherically symmetric sector of the space $\mathcal{X}_{T,M}$ introduced in \eqref{eq:fixed_point_space}, namely in the complete metric space $(\mathcal{X}^{(0)}_{T,M},d)$ with
\begin{equation}\label{eq:fixed_point_space_rad}
\begin{split}
\mathcal{X}^{(0)}_{T,M}\;&:=\;\big\{u\in L^\infty([-T,T],\widetilde{H}_{\alpha,\mathrm{rad}}^s(\mathbb{R}^3))\,|\,\|u\|_{L^\infty([-T,T],\widetilde{H}_\alpha^s(\mathbb{R}^3)}\leqslant M\big\} \\
d(u,v)\;&:=\;\|u-v\|_{L^\infty([-T,T],L^2(\mathbb{R}^3))}
\end{split}
\end{equation}
for suitable $T,M>0$.

A very much useful by-product of such a contraction argument will be the proof that when $s=2$ the solution to the integral problem \eqref{eq:integral_formula_Duhamel} is also a solution to the differential problem \eqref{eq:Cauchy_problem_sing_Hartree}, as we shall show in a moment.

Let us start with two preparatory lemmas.
\begin{lemma}\label{convu_bou}
	Let $\alpha\geqslant 0$ and $s\in(\frac{3}{2},2]$. Let $w\in W^{s,p}(\mathbb{R}^3)$ for $p\in(2,+\infty)$ and assume that $w$ is spherically symmetric. Then
	\begin{itemize}
		\item[(i)] one has the estimate
		\begin{equation}\label{eq:convu_bou}
		\begin{split}
		\|w*(\psi_1\psi_2)\|_{L^\infty(\R^3)}\;&\lesssim\;\|w*(\psi_1\psi_2)\|_{W^{s,3p}(\R^3)} \\
		&\lesssim\;\|w\|_{W^{s,p}(\R^3)}\|\psi_1\|_{ \widetilde{H}^s_\alpha(\mathbb{R}^3)}\|\psi_2\|_{ \widetilde{H}^s_\alpha(\mathbb{R}^3)}
		\end{split}
		\end{equation}
		for any $ \widetilde{H}^s_\alpha$-functions $\psi_1$, $\psi_2$, and $\psi_3$;
		\item[(ii)] if in addition $\psi_1$, $\psi_2$ are spherically symmetric, so too is $w*(\psi_1\psi_2)$ and
		$$\big(\nabla(w*(\psi_1\psi_2))\big)(0)\;=\;0\,.$$
	\end{itemize}
\end{lemma}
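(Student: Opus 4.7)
The plan is to mirror the proof of Lemma \ref{convu}, adapting the decomposition of $\widetilde{H}^s_\alpha$-functions to the high-regularity regime of Theorem \ref{thm:domain_s}(iii), and then to use the improved Sobolev embedding \eqref{eq:SobEmb-II} available for $s>\tfrac32$ to handle the new spherical symmetry statement.

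For part (i), the first inequality is Sobolev's embedding $W^{s,3p}(\mathbb{R}^3)\hookrightarrow L^\infty(\mathbb{R}^3)$, valid since $3sp>3$ (indeed $sp>3$ under the hypotheses). For the second inequality the key ingredient is the embedding
\[
\widetilde{H}^s_\alpha(\mathbb{R}^3)\;\hookrightarrow\; L^{\frac{6p}{3p-2}}(\mathbb{R}^3)\,,
\]
which I would establish by decomposing $\psi=\phi_\lambda+\kappa_\lambda G_\lambda$ with $\phi_\lambda\in H^s$ and $\kappa_\lambda=\phi_\lambda(0)/(\alpha+\tfrac{\sqrt\lambda}{4\pi})$ according to \eqref{eq:Ds_s3/2-2}, and then bounding $\|\phi_\lambda\|_{L^{6p/(3p-2)}}\lesssim\|\phi_\lambda\|_{H^s}$ by Sobolev's embedding (note $\tfrac{6p}{3p-2}\in(2,3)$ for $p\in(2,\infty)$), $|\phi_\lambda(0)|\lesssim\|\phi_\lambda\|_{H^s}$ from $H^s\hookrightarrow L^\infty$ (available since $s>\tfrac32$), and $\|G_\lambda\|_{L^{6p/(3p-2)}}<\infty$ from $\tfrac{6p}{3p-2}<3$. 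The norm equivalence \eqref{eq:equiv_of_norms_s322} closes the embedding. With this in hand, I would write $(\mathbbm{1}-\Delta)^{s/2}(w\ast(\psi_1\psi_2))=((\mathbbm{1}-\Delta)^{s/2}w)\ast(\psi_1\psi_2)$ and apply Young's inequality with $\tfrac{1}{3p}+1=\tfrac{1}{p}+\tfrac{3p-2}{3p}$, followed by Hölder's inequality $\|\psi_1\psi_2\|_{L^{3p/(3p-2)}}\leqslant\|\psi_1\|_{L^{6p/(3p-2)}}\|\psi_2\|_{L^{6p/(3p-2)}}$, to conclude.

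For part (ii), spherical symmetry of $w\ast(\psi_1\psi_2)$ is inherited: $\psi_1\psi_2$ is spherically symmetric, $w$ is spherically symmetric by assumption, and convolution of two rotation-invariant functions is rotation-invariant. To obtain $(\nabla(w\ast(\psi_1\psi_2)))(0)=0$, I would invoke part (i) to place $w\ast(\psi_1\psi_2)$ in $W^{s,3p}(\mathbb{R}^3)$ with $s>\tfrac32$, and then the Sobolev embedding \eqref{eq:SobEmb-II} (applied with exponent $3p$) gives $w\ast(\psi_1\psi_2)\in \mathcal{C}^{1,\vartheta(s,3p)}(\mathbb{R}^3)$ with $\vartheta(s,3p)=s-1-\tfrac{1}{p}>0$, so the function is classically $\mathcal{C}^1$ at the origin. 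Writing $w\ast(\psi_1\psi_2)(x)=\widetilde h(|x|)$ for some $\mathcal{C}^1$ profile $\widetilde h:\mathbb{R}\to\mathbb{C}$, the chain rule gives $\nabla(w\ast(\psi_1\psi_2))(x)=\widetilde h'(|x|)\,x/|x|$ for $x\neq 0$, and continuity of the gradient at the origin forces $\widetilde h'(0)=0$, whence $(\nabla(w\ast(\psi_1\psi_2)))(0)=0$.

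There is no substantive obstacle here: the argument for (i) is a direct analogue of Lemma \ref{convu}, with the small bookkeeping difference that in the high-regularity regime the coefficient $\kappa_\lambda$ is rigidly tied to $\phi_\lambda(0)$, which is harmless because $H^s\hookrightarrow L^\infty$ for $s>\tfrac32$. The vanishing gradient statement in (ii) is really a regularity statement made possible precisely by the $s>\tfrac32$ hypothesis combined with the rotational invariance, and is therefore essentially automatic once (i) is in place.
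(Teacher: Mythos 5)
Your proof of part (i) is correct and coincides with the paper's: the same decomposition of $\widetilde{H}^s_\alpha$-functions via \eqref{eq:Ds_s3/2-2}, the same embedding $\widetilde{H}^s_\alpha(\mathbb{R}^3)\hookrightarrow L^{\frac{6p}{3p-2}}(\mathbb{R}^3)$ (using $H^s\hookrightarrow L^\infty$ to control $\phi_\lambda(0)$ and the norm equivalence \eqref{eq:equiv_of_norms_s322}), and the same Young--H\"older step after commuting $(\mathbbm{1}-\Delta)^{s/2}$ with the convolution. For part (ii) you take a slightly different, but equally valid, route on the vanishing-gradient claim: the paper writes $\nabla(w*(\psi_1\psi_2))(0)=((\nabla w)*(\psi_1\psi_2))(0)$ and observes that the integrand has the form $R(y)\,y/|y|$ with $R$ spherically symmetric, so the integral vanishes by symmetry; you instead use part (i) together with the embedding \eqref{eq:SobEmb-II} (with exponent $3p>6$) to conclude that the convolution is a $\mathcal{C}^1$ spherically symmetric function, and then note that any such function must have vanishing gradient at the origin (approaching along opposite rays forces the limit of $\widetilde h'(|x|)\,x/|x|$ to be zero). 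Your argument is a soft symmetry-plus-regularity argument that avoids differentiating under the convolution, while the paper's is a direct computation; both rest on the same hypotheses ($w$ and $\psi_1,\psi_2$ radial, $s>\frac32$) and both are complete.
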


\begin{proof}
	(i) The first inequality in \eqref{eq:convu_bou} is due to Sobolev's embedding
	\[
	W^{s,3p}(\mathbb{R}^3)\;\hookrightarrow\;L^\infty(\mathbb{R}^3)\,.
	\]
	For the second inequality, let us observe preliminarily that
	\[
	\widetilde{H}^s_\alpha(\mathbb{R}^3)\;\hookrightarrow\; L^{\frac{6p}{3p-2}}(\R^3)\,.
	\]
	Indeed, decomposing by means of \eqref{eq:Ds_s3/2-2} a generic $\psi\in\widetilde{H}^s_\alpha(\mathbb{R}^3)$ as $\psi=\phi_{\lambda}+\frac{\phi_{\lambda}(0)}{\alpha+\frac{\sqrt{\lambda}}{4\pi}}G_{\lambda}$ for  some $\phi_{\lambda}\in H^s(\R^3)$, one has
	\[
	\begin{split}
	\|\psi\|_{L^{\frac{6p}{3p-2}}}\;&\lesssim\; \|\phi_\lambda\|_{L^{\frac{6p}{3p-2}}}+|\phi_{\lambda}(0)|\,\|G_\lambda\|_{L^{\frac{6p}{3p-2}}}\; \lesssim \;\|\phi_\lambda\|_{H^s}\;\approx\; \|\psi\|_{\widetilde{H}^s_\alpha}\,,
	\end{split}
	\]
	the second step following from Sobolev's embedding $H^s(\R^3)\hookrightarrow L^{\frac{6p}{3p-2}}(\R^3)\cap  L^{\infty}(\R^3) $  and from $G_\lambda\in L^{\frac{6p}{3p-2}}(\R^3)$, because  $\frac{6p}{3p-2}\in[2,3)$ for $p\in(2,+\infty)$, the last step being the norm equivalence \eqref{eq:equiv_of_norms_s322}. Therefore Young's inequality yields
	\begin{equation*}
	\begin{aligned}
	\|w*(\psi_1\psi_2)\|_{W^{s,3p}}\;&\approx\;\|(\mathbbm{1}-\Delta)^{\frac s2}(w*(\psi_1\psi_2))\|_{L^{3p}} \\
	&=\;\|((\mathbbm{1}-\Delta)^{\frac s2}w)*(\psi_1\psi_2)\|_{L^{3p}}\\
	&\lesssim \;\|(\mathbbm{1}-\Delta)^{\frac s2}w\|_{L^p}\:\|\psi_1\|_{L^{\frac{6p}{3p-2}}}\,\|\psi_2\|_{L^{\frac{6p}{3p-2}}} \\
	&\lesssim\;\|w\|_{W^{s,p}(\R^3)}\,\|\psi_1\|_{ \widetilde{H}^s_\alpha(\mathbb{R}^3)}\,\|\psi_2\|_{ \widetilde{H}^s_\alpha(\mathbb{R}^3)}\,,
	\end{aligned}
	\end{equation*}
	thus proving \eqref{eq:convu_bou}.

	(ii) The spherical symmetry of $w*(\psi_1\psi_2)$ in this second case is obvious.
	From Sobolev's embedding $W^{s,3p}(\R^3)\hookrightarrow \mathcal{C}^1(\R^3)$ we deduce that $\nabla(w*(\psi_1\psi_2))(x)$ is well defined for every $x\in\R^3$; moreover, 
	$$\nabla(w*(\psi_1\psi_2))(0)\;=\;\big((\nabla w)*(\psi_1\psi_2)\big)(0)\;=\;\int_{\R^3}(\nabla w)(-y)\psi_1(y)\psi_2(y)\,\ud y\;=\;0\,,$$
	the above integral vanishing because the integrand is of the form $R(y)\frac{y}{|y|}$ for some spherically symmetric function $R$.
\end{proof}

\begin{lemma}\label{produ_bou}
	Let $\alpha\geqslant 0$ and $s\in(\frac{3}{2},2]$. Let $h\in W_{\mathrm{rad}}^{s,q}(\mathbb{R}^3)$ for some $q\in(6,+\infty)$ and assume that $(\nabla h)(0)=0$. Then $h\psi\in \widetilde{H}^s_\alpha(\mathbb{R}^3)$ for each $\psi\in \widetilde{H}^s_\alpha(\mathbb{R}^3)$ and
	\begin{equation}
	\|h\psi\|_{\widetilde{H}^s_\alpha(\mathbb{R}^3)}\;\lesssim\;\|h\|_{W^{s,q}(\R^3)}\,\|\psi\|_{\widetilde{H}^s_\alpha(\mathbb{R}^3)}\,.
	\end{equation}
\end{lemma}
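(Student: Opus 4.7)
The plan is to mirror the proof of Lemma \ref{produ}, but using the structural decomposition \eqref{eq:Ds_s3/2-2} appropriate to the high-regularity regime and invoking Proposition \ref{tstdue} in place of Proposition \ref{tst}. First I would decompose a generic $\psi\in\widetilde{H}^s_\alpha(\mathbb{R}^3)$ as
\[
\psi\;=\;\phi_\lambda+\frac{\phi_\lambda(0)}{\,\alpha+\frac{\sqrt{\lambda}}{4\pi}\,}\,G_\lambda\,,\qquad\phi_\lambda\in H^s(\mathbb{R}^3)\,,
\]
and set $\kappa_\lambda:=\phi_\lambda(0)(\alpha+\sqrt{\lambda}/(4\pi))^{-1}$. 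The natural rewriting is then
\[
h\psi\;=\;\bigl[\,h\phi_\lambda+\kappa_\lambda(h-h(0))\,G_\lambda\,\bigr]+\kappa_\lambda\,h(0)\,G_\lambda\,,
\]
and my aim is to check that the bracketed term $\widetilde{\phi}_\lambda$ lies in $H^s(\mathbb{R}^3)$ with value $\widetilde{\phi}_\lambda(0)=h(0)\phi_\lambda(0)$ at the origin, so that the multiplier of $G_\lambda$ equals $\widetilde{\phi}_\lambda(0)/(\alpha+\sqrt{\lambda}/(4\pi))$ and the representation of $h\psi$ fits exactly the template \eqref{eq:Ds_s3/2-2}.

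For the regular summand $h\phi_\lambda$ I would invoke the fractional Leibniz rule \eqref{eq:KatoPonce} together with the Sobolev embeddings $W^{s,q}(\mathbb{R}^3)\hookrightarrow L^\infty(\mathbb{R}^3)$ and $H^s(\mathbb{R}^3)\hookrightarrow L^\infty(\mathbb{R}^3)$ (both valid since $s>\tfrac32$), getting $\|h\phi_\lambda\|_{H^s}\lesssim\|h\|_{W^{s,q}}\|\phi_\lambda\|_{H^s}$ exactly as in step (ii) of the proof of Lemma \ref{produ}. For the more delicate term $(h-h(0))G_\lambda$, I would apply Proposition \ref{tstdue} to estimate $\|\mathcal{D}^s((h-h(0))G_\lambda)\|_{L^2}\lesssim\|h\|_{W^{s,q}}$, and complement it with the elementary bound $\|(h-h(0))G_\lambda\|_{L^2}\lesssim\|h\|_{L^\infty}\|G_\lambda\|_{L^2}\lesssim\|h\|_{W^{s,q}}$. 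This is precisely the point at which the hypotheses that $h$ be spherically symmetric and $(\nabla h)(0)=0$ enter the argument, since they are needed to apply Proposition \ref{tstdue}.

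Combining these two estimates and using $|\phi_\lambda(0)|\lesssim\|\phi_\lambda\|_{H^s}\approx\|\psi\|_{\widetilde{H}^s_\alpha}$ (by Sobolev embedding and the norm equivalence \eqref{eq:equiv_of_norms_s322}), I conclude that $\widetilde{\phi}_\lambda\in H^s(\mathbb{R}^3)$ with $\|\widetilde{\phi}_\lambda\|_{H^s}\lesssim\|h\|_{W^{s,q}}\|\psi\|_{\widetilde{H}^s_\alpha}$. Since $s>\tfrac{3}{2}$, every $H^s$-function has a continuous representative, and the pointwise bound \eqref{eq:fra_decay} from Lemma \ref{le:fra_decay} yields $|(h(x)-h(0))G_\lambda(x)|\lesssim\|h\|_{W^{s,q}}|x|^{\vartheta(s,q)}\to 0$ as $x\to 0$; hence $\widetilde{\phi}_\lambda(0)=h(0)\phi_\lambda(0)$, confirming that $h\psi$ has the structure \eqref{eq:Ds_s3/2-2}. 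The conclusion then follows at once from the norm equivalence \eqref{eq:equiv_of_norms_s322}.

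The main obstacle I anticipate is not the algebra-type estimate on $h\phi_\lambda$, which is standard, but rather the singular-component estimate on $(h-h(0))G_\lambda$. Without spherical symmetry and the condition $(\nabla h)(0)=0$, the Taylor remainder $h(x)-h(0)$ is only $O(|x|)$, which does not suffice to cancel the $|x|^{-1-s}$ contribution in Lemma \ref{lederi}; the improved decay $O(|x|^{1+\vartheta(s,q)})$ granted by Lemma \ref{le:fra_decay}, and thus the very applicability of Proposition \ref{tstdue}, is what makes this regime work and explains why the high-regularity theory is naturally set in the spherically symmetric sector.
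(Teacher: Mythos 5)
Your proposal is correct and follows essentially the same route as the paper's proof: the same decomposition via \eqref{eq:Ds_s3/2-2}, the fractional Leibniz rule for $h\phi_\lambda$, Proposition \ref{tstdue} for the singular term $(h-h(0))G_\lambda$, and the verification that the regular part takes the value $h(0)\phi_\lambda(0)$ at the origin (using $(\nabla h)(0)=0$ and the decay of Lemma \ref{le:fra_decay}) so that $h\psi$ satisfies the boundary condition built into \eqref{eq:Ds_s3/2-2}, concluding with the norm equivalence \eqref{eq:equiv_of_norms_s322}. Your closing remark on why $(\nabla h)(0)=0$ and spherical symmetry are indispensable matches the paper's own discussion in Section \ref{sec:spheric}.
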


\begin{proof}
	Let us decompose  $\psi\in \widetilde{H}^s_\alpha(\mathbb{R}^3)$ as  $\psi=\phi_{\lambda}+\frac{\phi_{\lambda}(0)}{\alpha+\frac{\sqrt{\lambda}}{4\pi}}G_{\lambda}$  for some $\phi_{\lambda}\in H^s(\R^3)$, according to \eqref{eq:Ds_s3/2-2}. On the other hand, by the embedding \eqref{eq:SobEmb-II} the function $h$ is continuous and $|h(0)|\leqslant\|h\|_{L^\infty(\mathbb{R}^3)}\lesssim\|h\|_{W^{s,q}(\R^3)}$. Thus,
	\begin{equation*}\tag{i}
	h\,\psi\;=\;h\,\phi_\lambda+{\textstyle\frac{\phi_{\lambda}(0)}{\alpha+\frac{\sqrt{\lambda}}{4\pi}}}\,(h-h(0))\,G_\lambda+{\textstyle\frac{\phi_{\lambda}(0)}{\alpha+\frac{\sqrt{\lambda}}{4\pi}}}\,h(0)\,G_\lambda\,.
	\end{equation*}
	Applying the fractional Leibniz rule \eqref{eq:KatoPonce} and using Sobolev's embedding,
	\begin{equation*}\tag{ii}
	\begin{aligned}
	\|h\,\phi_\lambda\|_{H^s}\;&\approx\;\|(\mathbbm{1}-\Delta)^{\frac s2}(h\,\phi_\lambda)\|_{L^2} \\
	&\lesssim\; \|(\mathbbm{1}-\Delta)^{\frac s2}h\|_{L^q}\,\|\phi_\lambda\|_{L^{\frac{2q}{q-2}}}+\|h\|_{L^{\infty}}\|(\mathbbm{1}-\Delta)^{\frac s2}\phi_\lambda\|_{L^2} \\
	&\lesssim\;\|h\|_{W^{s,q}}\|\phi_\lambda\|_{H^s}\,.
	\end{aligned}
	\end{equation*}
	Moreover, since $G_\lambda\in L^2(\R^3)$,
	$$\big\|{\textstyle\frac{\phi_{\lambda}(0)}{\alpha+\frac{\sqrt{\lambda}}{4\pi}}}(h-h(0))G_\lambda\big\|_{L^2}\;\lesssim\;\|h-h(0)\|_{L^{\infty}}\,\|\phi_\lambda\|_{L^\infty}\;\lesssim\;\|h\|_{W^{s,q}}\,\|\phi_\lambda\|_{H^s}\,;$$
	this, together with the estimate \eqref{tecredue} (which requires indeed spherical symmetry), gives
	\begin{equation*}\tag{iii}
	\big\|{\textstyle\frac{\phi_{\lambda}(0)}{\alpha+\frac{\sqrt{\lambda}}{4\pi}}}\,(h-h(0))G_\lambda\big\|_{H^s}\;\lesssim\; \|h\|_{W^{s,q}}\|\phi_\lambda\|_{H^s}\,.
	\end{equation*}
	The bounds (ii) and (iii) above imply that $F_\lambda:=h\phi_\lambda+{\textstyle\frac{\phi_{\lambda}(0)}{\alpha+\frac{\sqrt{\lambda}}{4\pi}}}(h-h(0))G_\lambda$ belongs to $ H^s(\mathbb{R}^3)$ with
	\[\tag{iv}
	\|F\|_{H^s}\;\lesssim\;\|h\|_{W^{s,q}}\|\phi_\lambda\|_{H^s}\,.
	\]
	In particular, $F_\lambda$ is continuous. One has
	\[
	\begin{split}
	F_\lambda(0)\;=\;h(0)\phi_\lambda(0)+{\textstyle\frac{\phi_{\lambda}(0)}{\alpha+\frac{\sqrt{\lambda}}{4\pi}}}\lim_{|x|\to 0}\frac{h(x)-h(0)}{|x|}\;=\;h(0)\phi_\lambda(0)
	\end{split}
	\]
	because by assumption $(\nabla h)(0)=0$\,.
	In turn, (i) now reads $h\psi=F_\lambda+\frac{F_\lambda(0)}{\alpha+\frac{\sqrt{\lambda}}{4\pi}}G_\lambda$, which means, in view of the domain decomposition \eqref{eq:Ds_s3/2-2}, that $h\psi$ belongs to $\widetilde{H}^s_\alpha(\mathbb{R}^3)$. Owing to (iv) above and to the norm equivalence \eqref{eq:equiv_of_norms_s322}, we conclude
	\[
	\|h\psi\|_{\widetilde{H}^s_\alpha}\;\approx\;\|F_\lambda\|_{H^s}\,\lesssim\;\|h\|_{W^{s,q}}\|\phi_\lambda\|_{H^s}\,,
	\]
	which completes the proof.
\end{proof}

Combining Lemmas \ref{convu_bou} and \ref{produ_bou} one therefore has the trilinear estimate
\begin{equation}\label{eq:LaLh}
\|(w*(u_1u_2))u_3\|_{\widetilde{H}_{\alpha,\mathrm{rad}}^s(\mathbb{R}^3) }\;\lesssim\;\|w\|_{W^{s,p}(\R^3)}\prod_{j=1}^3\|u_j\|_{\widetilde{H}_{\alpha,\mathrm{rad}}^s(\mathbb{R}^3) }.
\end{equation}

Let us now prove Theorem \ref{highh}.

\begin{proof}[Proof of Theorem \ref{highh}]~
	
	From the expression \eqref{eq:Smap} for the solution map $\Phi(u)$ and from the bound \eqref{eq:LaLh} one finds
	\[\tag{i}
	\begin{split}
	\|\Phi(u)\|_{L^\infty\widetilde{H}_{\alpha,\mathrm{rad}}^s }\;&\leqslant\;\|f\|_{\widetilde{H}_{\alpha,\mathrm{rad}}^s }+T\,\|(w*|u|^2)u\|_{L^\infty\widetilde{H}_{\alpha,\mathrm{rad}}^s } \\
	&\leqslant\;\|f\|_{\widetilde{H}_{\alpha,\mathrm{rad}}^s }+C_1\,T\,\|w\|_{W^{s,p}}\,\|u\|^3_{L^{\infty}\widetilde{H}^s_{\alpha,\mathrm{rad}}}
	\end{split}
	\]
	for some constant $C_1>0$.
	
	Moreover,
	\begin{equation*}\tag{ii}
	\begin{split}
	\|\Phi(u&)-\Phi(v)\|_{L^\infty L^2 }\;\leqslant\;T\,\|(w*|u|^2)u-(w*|v|^2)v\|_{L^\infty L^2 } \\
	&\lesssim\;T\,\big( \|(w*|u|^2)(u-v)\|_{L^\infty L^2 } + \big\|\big(w*(|u|^2-|v|^2\big) v\big\|_{L^\infty L^2 }\big) \,.
	\end{split} 
	\end{equation*}
	For the first summand in the r.h.s.~above the bound \eqref{eq:convu_bou} and H\"older's inequality yield
	\begin{equation*}\tag{iii}
	\begin{split}
	\|(w*|u|^2)(u-v)\|_{L^\infty L^2 }\;&\leqslant\; \|w*|u|^2\|_{L^{\infty}L^{\infty}}\,\|u-v\|_{L^{\infty}L^2}\\
	&\lesssim\; \|w\|_{W^{s,p}}\,\|u\|^2_{L^{\infty}\widetilde{H}_{\alpha}^s}\,\|u-v\|_{L^{\infty}L^2}\,.
	\end{split}
	\end{equation*}
	For the second summand, let us observe preliminarily that the embedding
	\begin{equation*}\tag{iv}
	\widetilde{H}^s_\alpha(\mathbb{R}^3)\;\hookrightarrow\; L^{3,\infty}(\R^3)
	\end{equation*}
	valid for $s\in(\frac{1}{2},\frac{3}{2})$ and established in the proof of Theorem \ref{thm:high_reg} holds true even more when $s\in(\frac{3}{2},2]$.
	Then (iv) above, Sobolev's embedding $W^{s,p}(\R^3)\hookrightarrow L^3(\R^3)$, and an application of Holder's and Young's inequality in Lorentz spaces, yield
	\begin{equation*}\tag{v}
	\begin{split}
	\|(w*(|u|^2\!-\!|v|^2))&v\|_{L^\infty L^2 }\;\leqslant\; \|w*(|u|^2\!-\!|v|^2)\|_{L^{\infty}L^{6,2}}\,\|v\|_{L^{\infty}L^{3,\infty}}\\
	&\lesssim\; \|w\|_{L^3}\,\|u+v\|_{L^{\infty}L^{3,\infty}}\,\|u-v\|_{L^{\infty}L^2}\,\|v\|_{L^{\infty}L^{3,\infty}}\\
	&\lesssim\; \|w\|_{W^{s,p}}\,\|u+v\|_{L^{\infty}\widetilde{H}_{\alpha}^s}\,\|u-v\|_{L^{\infty}L^2}\,\|v\|_{L^{\infty}\widetilde{H}_{\alpha}^s}\,.
	\end{split}
	\end{equation*}
	Combining (ii), (iii), and (v) we get
	\[\tag{vi}
	\begin{split}
	d(\Phi(u),\Phi(v))\;&\leqslant\;C_2\,T\,\|w\|_{W^{s,p}}\big(\|u\|^2_{L^\infty \widetilde{H}_\alpha^s }+\|v\|^2_{L^\infty \widetilde{H}_\alpha^s} \big)\,d(u,v)
	\end{split}
	\]
	for some constant $C_2>0$.

	Thus, choosing $T$ and $M$ such that
	\[
	M\;=\;2\,\|f\|_{\widetilde{H}_{\alpha}^s}\,,\qquad T\;=\;{\textstyle\frac{1}{4}}\,\big(\max\{C_1,C_2\}\,M^2 \|w\|_{W^{s,p}}\big)^{-1}\,,
	\]
	estimate (i) reads $ \|\Phi(u)\|_{L^\infty \widetilde{H}_{\alpha,\mathrm{rad}}^s}\leqslant M$ and shows that $\Phi$ maps the space $\mathcal{X}^{(0)}_{T,M}$ into itself, whereas estimate (vi) reads $d(\Phi(u),\Phi(v))\leqslant\frac{1}{2}d(u,v)$ and shows that $\Phi$ is a contraction on $\mathcal{X}^{(0)}_{T,M}$. By Banach's fixed point theorem, there exists a unique fixed point $u\in \mathcal{X}^{(0)}_{T,M}$ of $\Phi$ and hence a unique solution $u\in \mathcal{X}^{(0)}_{T,M}$ to \eqref{eq:integral_formula_Duhamel}, which is therefore also continuous in time.

	Furthermore, by a standard continuation argument we can extend such a solution over a maximal interval for which the blow-up alternative holds  true. Also the continuous dependence on the initial data is a direct consequence of the fixed point argument. 
\end{proof}

A straightforward, yet crucial for us, consequence of Theorem \ref{highh} when $s=2$ concerns the differential meaning of the local strong solution determined so far.

\begin{corollary}[Integral and differential formulation]\label{diffsensehigh}
	Let $\alpha\geqslant 0$ and $w\in W^{2,p}(\mathbb{R}^3)$, $p\in(2,+\infty)$, a spherically symmetric potential. Assume moreover $f\in\widetilde{H}_{\alpha,\mathrm{rad}}^2(\R^3)$. Let $u\in\mathcal{C}([-T,T],\widetilde{H}^{2}_{\alpha,\mathrm{rad}}(\mathbb{R}^3))$ the unique local to the Cauchy problem \eqref{eq:Cauchy_problem_sing_Hartree} in the interval $[-T,T]$, for some $T>0$, i.e. $u$ satisfies the Duhamel formula \eqref{eq:integral_formula_Duhamel}.
	Then $u(0,\cdot)=f$ and $u$ satisfies the equation $\ii\partial_t u=-\Delta_\alpha u+ (w*|u|^2)u$ as an identity in between $L^2(\R^3)$-functions.
\end{corollary}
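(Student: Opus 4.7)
The plan is to mirror the argument used for Corollary \ref{soddire}, the only real difference being that in the present higher-regularity regime we must exploit the trilinear estimate \eqref{eq:LaLh} available in the spherically symmetric sector. First, setting $t=0$ directly in the Duhamel formula \eqref{eq:integral_formula_Duhamel} gives $u(0)=f$ as an identity in $\widetilde{H}^2_{\alpha,\mathrm{rad}}(\mathbb{R}^3)$, so the initial condition is immediate.

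The core task is to show that the integral equation implies the differential one in the pointwise sense in $L^2$. To that end, I would first verify that the non-linear map
\[
\mathcal{N}(u)\;:=\;(w*|u|^2)\,u
\]
sends $\widetilde{H}^2_{\alpha,\mathrm{rad}}(\mathbb{R}^3)$ continuously into itself. Indeed, under the assumptions on $w$ and on the spherical symmetry of all factors, the trilinear bound \eqref{eq:LaLh} gives
\[
\|\mathcal{N}(u)-\mathcal{N}(v)\|_{\widetilde{H}^2_{\alpha,\mathrm{rad}}}\;\lesssim\;\|w\|_{W^{2,p}}\bigl(\|u\|_{\widetilde{H}^2_{\alpha,\mathrm{rad}}}^2+\|v\|_{\widetilde{H}^2_{\alpha,\mathrm{rad}}}^2\bigr)\|u-v\|_{\widetilde{H}^2_{\alpha,\mathrm{rad}}}
\]
by standard polarisation. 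Since $u\in\mathcal{C}([-T,T],\widetilde{H}^2_{\alpha,\mathrm{rad}}(\mathbb{R}^3))$, composing with $\mathcal{N}$ then shows that the forcing term $t\mapsto \mathcal{N}(u(t))$ belongs to $\mathcal{C}([-T,T],\widetilde{H}^2_{\alpha,\mathrm{rad}}(\mathbb{R}^3))$, and in particular to $\mathcal{C}([-T,T],\mathcal{D}(-\Delta_\alpha))$ since $\widetilde{H}^2_\alpha(\mathbb{R}^3)=\mathcal{D}(-\Delta_\alpha)$.

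At this point the conclusion follows from the standard regularity theory of linear evolution groups generated by self-adjoint operators, precisely in the form recalled in \cite[Section 1.6]{cazenave}: if $A$ is self-adjoint on a Hilbert space $\mathcal{H}$, $f_0\in\mathcal{D}(A)$, and $g\in\mathcal{C}(I,\mathcal{D}(A))$, then the mild solution
\[
u(t)\;=\;e^{-\ii tA}f_0-\ii\int_0^t e^{-\ii(t-\tau)A}g(\tau)\,\ud\tau
\]
is in fact a classical solution, namely $u\in\mathcal{C}^1(I,\mathcal{H})\cap\mathcal{C}(I,\mathcal{D}(A))$, and satisfies $\ii\partial_t u=Au+g$ as an identity in $\mathcal{H}$. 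Applying this with $A=-\Delta_\alpha$, $\mathcal{H}=L^2(\mathbb{R}^3)$, $f_0=f$, and $g(t)=\mathcal{N}(u(t))$ yields the desired pointwise-in-time identity
\[
\ii\,\partial_t u(t)\;=\;-\Delta_\alpha u(t)+(w*|u(t)|^2)\,u(t)\qquad\text{in }L^2(\mathbb{R}^3)
\]
for every $t\in[-T,T]$.

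The only subtle point, which is nonetheless already settled by the work done in Section \ref{sec:higher_reg}, is precisely the continuity of $\mathcal{N}$ into the operator domain; without spherical symmetry one would not be able to regain $\widetilde{H}^2_\alpha$-regularity on $\mathcal{N}(u)$ because the Bethe--Peierls boundary condition encoded in $\widetilde{H}^2_\alpha(\mathbb{R}^3)$ (see \eqref{eq:Ds_s3/2-2}) need not be preserved by pointwise multiplication. The radial assumption, together with Lemma \ref{convu_bou}(ii) ensuring that $(\nabla(w*|u|^2))(0)=0$, is exactly what allows Lemma \ref{produ_bou} to apply and hence the trilinear estimate \eqref{eq:LaLh} to hold. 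With this geometric constraint in place the rest of the argument is purely linear semigroup theory.
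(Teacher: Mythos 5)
Your proposal is correct and follows essentially the same route as the paper: continuity of $u\mapsto(w*|u|^2)u$ from $\widetilde{H}^2_{\alpha,\mathrm{rad}}(\mathbb{R}^3)$ into itself via the trilinear bound \eqref{eq:LaLh} (which is where the spherical symmetry and Lemmas \ref{convu_bou}--\ref{produ_bou} enter), followed by the standard regularity result for inhomogeneous linear evolution equations from \cite[Section 1.6]{cazenave}. You merely spell out in more detail the semigroup-theoretic step and the polarisation giving local Lipschitz continuity, which the paper leaves implicit.
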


\begin{proof}
	The bound \eqref{eq:LaLh} shows that the non-linearity defines a map $u\mapsto (w*|u|^2)u$ that is continuous from $\widetilde{H}^{2}_{\alpha,\mathrm{rad}}(\mathbb{R}^3)$ into itself, and hence in particular it is continuous from $\widetilde{H}^{2}_{\alpha,\mathrm{rad}}(\mathbb{R}^3)$ to $L^2(\mathbb{R}^3)$. Then the thesis follows by standard fact on the theory of linear semi-groups (see \cite[Section 1.6]{cazenave}).
\end{proof}

\section{Global solutions in the mass and in the energy space}\label{sec:global}

In order to study the global solution theory of the Cauchy problem \eqref{eq:Cauchy_problem_sing_Hartree} when $s=0$ (the mass space $L^2(\mathbb{R}^3)$) and $s=1$ (the energy space $\widetilde{H}^1_\alpha(\mathbb{R}^3)$), we introduce the following two quantities, that are formally conserved in time along the solutions.

\begin{definition}~
	\begin{itemize}
		\item[(i)] Let $u\in L^2(\R^3)$. We define the \emph{mass} of $u$ as 
		$$\mathcal{M}(u)\;:=\;\|u\|_{L^2}^2\,.$$
		\item[(ii)] Let $\lambda>0$ and let $u=\phi_\lambda+\kappa_\lambda\,G_{\lambda}\in \widetilde{H}_{\alpha}^1(\R^3)$, according to \eqref{eq:Ds_s1/2-3/2}. We define the \emph{energy} of $u$ as
		\begin{equation*}
		\begin{aligned}
		\mathcal{E}(u)\;:=&\;\;{\textstyle\frac12}(-\Delta_{\alpha})[u]+{\textstyle\frac14}\int_{\R^3}\!(w*|u|^2)|u|^2\,\ud x\\
		=&\;\;{\textstyle\frac12}\big(\lambda\|\phi_\lambda\|_{L^2}^2+\|\nabla\phi_\lambda\|_{L^2}^2+\big(\alpha+\textstyle\frac{\sqrt{\lambda}}{4\pi}\big)|\kappa_\lambda|^2-\lambda\|u\|_{L^2}^2\big)\\
		&\qquad+{\textstyle\frac14}\!\int_{\R^3}(w*|u|^2)|u|^2\,\ud x\,.
		\end{aligned}
		\end{equation*}
	\end{itemize}
\end{definition}

\begin{remark}
	For given $u$, the value of $(-\Delta_\alpha)[u]$ (the quadratic form of $-\Delta_\alpha$) is independent of $\lambda$, and so too is the energy $\mathcal{E}(u)$.
\end{remark}

We shall establish suitable conservation laws in order to prolong the local solution globally in time.
The mass is conserved in  $L^2(\mathbb{R}^3)$ in the following sense.

\begin{proposition}[Mass conservation in $L^2(\mathbb{R}^3)$]\label{mass_cons}~
	
	\noindent Let $\alpha\geqslant 0$, and let $w$ belong either to the class $L^\infty(\mathbb{R}^3)\cap W^{1,3}(\mathbb{R}^3)$ or to the class $w\in L^{\frac{3}{\gamma},\infty}(\R^3)$, for $\gamma\in(0,\frac32)$. For a given $f\in L^2(\mathbb{R}^3)$, let $u$ be the unique local solution in $\mathcal{C}((-T_*,T^*),L^2(\mathbb{R}^3))$ to the Cauchy problem \eqref{eq:integral_formula_Duhamel} in the maximal interval $(-T_*,T^*)$, as given by Theorem \ref{thm:high_reg}.
	Then $\mathcal{M}(u(t))$ is constant for $t\in(-T_*,T^*)$.
\end{proposition}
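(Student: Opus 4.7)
The plan is to establish the standard pairing identity
\[
\textstyle\frac{1}{2}\frac{\ud}{\ud t}\|u(t)\|_{L^2}^2\;=\;\im\langle -\Delta_\alpha u(t)+(w*|u(t)|^2)u(t),\,u(t)\rangle
\]
for the solution and to show that both summands inside the bracket contribute real quantities, so that the imaginary part vanishes and the mass is preserved. The difficulty is that, for $u\in L^2$, the objects $-\Delta_\alpha u$ and $(w*|u|^2)u$ only live in distributional senses; the identity above is hence justified first on a regularised problem and then transferred to the actual $L^2$-solution via the stability statement of Proposition \ref{stabint_mass}.

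For the regularised problem, pick $f\in\widetilde{H}_\alpha^1(\mathbb{R}^3)$ and $w\in W^{1,p}(\mathbb{R}^3)$ with $p\in(2,+\infty)$: then Theorem \ref{thm:high_reg} yields a local solution $u\in\mathcal{C}([-T,T],\widetilde{H}_\alpha^1(\mathbb{R}^3))$, and Corollary \ref{soddire} guarantees that $\ii\partial_t u=-\Delta_\alpha u+(w*|u|^2)u$ holds as an identity in $\widetilde{H}_\alpha^{-1}(\mathbb{R}^3)$. Pairing it with $u(t)\in\widetilde{H}_\alpha^1(\mathbb{R}^3)$ in the duality between $\widetilde{H}_\alpha^{-1}$ and $\widetilde{H}_\alpha^1$, the linear contribution becomes the quadratic form $(-\Delta_\alpha)[u]$ of \eqref{eq:form_dom-opaction}, a real number, and the cubic contribution is the real integral $\int_{\mathbb{R}^3}(w*|u|^2)|u|^2\,\ud x$ (since $w$ is real-valued). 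Taking imaginary parts kills both, so $\|u(t)\|_{L^2}$ is constant.

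The passage from this regularised version to the hypotheses of the proposition proceeds by a two-level approximation: on one side $f_n\in\widetilde{H}_\alpha^1(\mathbb{R}^3)$ with $f_n\to f$ in $L^2$ (possible since $\widetilde{H}_\alpha^1(\mathbb{R}^3)$ is $L^2$-dense, containing e.g.~$C_0^\infty(\mathbb{R}^3\setminus\{0\})$), on the other side smooth $w_n\in W^{1,p}$ for some $p>2$ with $w_n\to w$ in the norm prescribed by Proposition \ref{stabint_mass}. When $w\in L^\infty\cap W^{1,3}$ one may simply take $w_n=w$; in the weak-Lorentz case $w\in L^{\frac{3}{\gamma},\infty}$, $w_n$ is built by a combined mollification and truncation of $w$. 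By the previous step one has $\|u_n(t)\|_{L^2}=\|f_n\|_{L^2}$ wherever the $\widetilde{H}_\alpha^1$-regularity of $u_n$ persists, and Proposition \ref{stabint_mass} furnishes a time $T=T(\|w\|,\|f\|_{L^2})$ on which $u_n\to u$ in $\mathcal{C}([-T,T],L^2)$; by continuity of the $L^2$-norm mass conservation then extends to the whole of $[-T,T]$. Because $T$ depends only on $\|w\|$ and on $\|f\|_{L^2}$, which is now conserved, a finite iteration centred at consecutive points of the maximal interval covers $(-T_*,T^*)$.

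The most delicate point is the construction of $w_n$ in the weak-Lorentz regime, since smooth compactly supported functions are not dense in $L^{\frac{3}{\gamma},\infty}(\mathbb{R}^3)$ with respect to the norm topology: the truncation/mollification scheme must be tuned carefully so that convergence truly holds in the topology required by Proposition \ref{stabint_mass}. Everything else in the argument is routine bookkeeping built on the self-adjointness of $-\Delta_\alpha$ and on the reality of the Hartree interaction.
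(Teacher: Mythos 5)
Your proposal is correct and follows essentially the same route as the paper: mass conservation is first established for regular data ($f\in\widetilde{H}^1_\alpha(\mathbb{R}^3)$, $w\in W^{1,p}(\mathbb{R}^3)$ with $p>2$) by pairing the equation, valid in $\widetilde{H}^{-1}_\alpha(\mathbb{R}^3)$ thanks to Corollary \ref{soddire}, against $u$ and taking imaginary parts, and is then transferred to general $f\in L^2(\mathbb{R}^3)$ (and, in the Lorentz case, to general $w$) by density in the datum and approximation of the potential combined with the stability of Proposition \ref{stabint_mass}, followed by propagation of the local constancy along the maximal interval. The weak-Lorentz density subtlety you flag is present in the paper as well, which simply takes Schwartz potentials $w_n\to w$ in $L^{\frac{3}{\gamma},\infty}(\mathbb{R}^3)$ without further comment, so your caution there does not mark a divergence from the published argument.
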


\begin{proof}
	Let us discuss first the case $w\in L^\infty(\mathbb{R}^3)\cap W^{1,3}(\mathbb{R}^3)$. Consider preliminarily an initial data $f\in \widetilde{H}_{\alpha}^1(\R^3)$. Owing to Corollary \ref{soddire}, for each $t\in (-T_*,T^*)$ $u$ satisfies $\ii\partial_t u=-\Delta_\alpha u+ (w*|u|^2)u$ as an identity between $\widetilde{H}^{-1}_\alpha$-functions, whence
	\begin{equation*}
	\big\langle \,\ii\partial_t u+\Delta_\alpha u-(w*|u|^2)u,u\big\rangle_{\widetilde{H}^{-1}_\alpha,\widetilde{H}^{1}_\alpha}\;=\;0\,.
	\end{equation*}
	The imaginary part of the above identity gives
	$${\textstyle\frac{\ud}{\ud t}}\,\|u(t)\|_{L^2}^2\;=\;0\,,$$
	which implies that $\mathcal{M}(u(t))$ is constant on $(-T_*,T^*)$.
	For arbitrary $f\in L^2(\mathbb{R}^3)$ we use a density argument. Let $(f_n)_n$ be a sequence in $\widetilde{H}_{\alpha}^1(\R^3)$ such that  $f_n\xrightarrow[]{n\to\infty} f$ in  $L^2(\mathbb{R}^3)$, and denote by $u_n$ the solution to the Cauchy problem \eqref{eq:Cauchy_problem_sing_Hartree} with initial datum $f_n$. Because of the continuous dependence on the initial data, we have that $u_n\to u$ in $\mathcal{C}(I,L^2(\R^3))$, for every closed interval $I\subset(-T_*,T^*)$. Since $\mathcal{M}(u_n(t))=\mathcal{M}(u_n(0))=\mathcal{M}(f_n)$ for every $n$, we deduce that $\mathcal{M}(u(t))=\mathcal{M}(f)$ for $t\in I$. Owing to the continuity of the map $t\mapsto\mathcal{M}(u(t))$, we conclude that $\mathcal{M}(u(t))=\mathcal{M}(f)$ for $t\in(-T_*,T^*)$.
	
	Let us discuss now the case $w\in L^{\frac{3}{\gamma},\infty}(\R^3)$, $\gamma\in(0,\frac32)$. Consider preliminarily an initial data $f\in\widetilde{H}_{\alpha}^1(\R^3)$ and a Schwartz potential $w$. Owing to Corollary \ref{soddire}, for each $t\in (-T_*,T^*)$ $u$ satisfies $\ii\partial_t u=-\Delta_\alpha u+ (w*|u|^2)u$ as an identity between $\widetilde{H}^{-1}_\alpha$-functions, and reasoning as above we deduce that $\mathcal{M}(u(t))$ is constant on $(-T_*,T^*)$. For arbitrary $f\in L^2(\mathbb{R}^3)$ and $w\in L^{\frac{3}{\gamma},\infty}(\R^3)$, $\gamma\in(0,\frac32)$, we use a density argument. Let $(f_n)_n$ be a sequence in $\widetilde{H}_{\alpha}^1(\R^3)$ such that  $f_n\xrightarrow[]{n\to\infty} f$ in  $L^2(\mathbb{R}^3)$, $(w_n)_n$ be a sequence of Schwartz potentials such that  $w_n\xrightarrow[]{n\to\infty} w$ in  $L^{\frac{3}{\gamma},\infty}(\R^3)$, and denote by $u_n$ the $L^2$ strong solution to the Cauchy problem \eqref{eq:Cauchy_problem_sing_Hartree} with initial datum $f_n$ and potential $w_n$. The stability result given by Proposition \ref{stabint_mass} guarantees that $u_n\xrightarrow[]{n\to +\infty} u$ in $\mathcal{C}([-T,T],L^2(\R^3))$ for some $T>0$, whence $\mathcal{M}(u_n(t))\xrightarrow[]{n\to +\infty}\mathcal{M}(u(t))$ for $t\in[-T,T]$. Using the mass conservation for $u_n$  we deduce that $\mathcal{M}(u(t))=\mathcal{M}(f)$ for $t\in[-T,T]$.
	Repeating the above argument with $f$ replaced by $u(t_0)$ for some $t_0\in(-T_*,T^*)$ yields the property that $t\mapsto\mathcal{M}(u(t))$ is constant in a suitable interval around $t_0$ and hence, by the arbitrariness of $t_0$, it is locally constant on the whole $(-T_*,T^*)$. But $(-T_*,T^*)\ni t\mapsto\mathcal{M}(u(t))$ is also continuous, whence the conclusion.
\end{proof}

We therefore conclude the following.

\begin{proof}[Proof of Theorem \ref{thm:GWP_Mass}]
	An immediate consequence of the conservation of the mass, i.e., conservation of the $L^2$-norm, and of the blow up alternative in $L^2$.
\end{proof}

Let us move now to the conservation of mass and energy in the energy space. We observe the following.

\begin{lemma}\label{preli}
	Let $\alpha\geqslant 0$ and let $w\in W^{1,p}(\mathbb{R}^3)$ for some $p>2$. If $v_n\xrightarrow[]{n\to +\infty} v$ in $\widetilde{H}_{\alpha}^1(\R^3)$, then $\mathcal{E}(v_n)\xrightarrow[]{n\to +\infty}\mathcal{E}(v)$. As a consequence, if $u\in\mathcal{C}([-T,T],\widetilde{H}_{\alpha}^1(\R^3))$ for some $T>0$, then $t\mapsto\mathcal{E}(u(t))$ is continuous on $[-T,T]$.
\end{lemma}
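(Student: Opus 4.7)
The plan is to split $\mathcal{E}$ into its quadratic and quartic parts and check that each is continuous on $\widetilde{H}^1_\alpha(\R^3)$. The quadratic contribution $\tfrac12(-\Delta_\alpha)[u]$ is continuous because, by the very definition \eqref{eq:Hsperturbed-norm} of the form norm at $s=1$, one has $(-\Delta_\alpha)[u]=\|u\|_{\widetilde{H}^1_\alpha}^2-\|u\|_{L^2}^2$, and both terms on the right are manifestly continuous under $\widetilde{H}^1_\alpha$-convergence (recall $\|\cdot\|_{L^2}\leqslant\|\cdot\|_{\widetilde{H}^1_\alpha}$).

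The substantive step is the continuity of the Hartree term $\mathcal{N}(u):=\int_{\R^3}(w*|u|^2)|u|^2\,\ud x$. The key observation is that the associated four-linear form
\[
F(a,b,c,d)\;:=\;\int_{\R^3}\bigl(w*(a\,\overline{b})\bigr)\,(c\,\overline{d})\,\ud x
\]
is bounded on $\widetilde{H}^1_\alpha(\R^3)^{\times 4}$. Indeed, H\"older's inequality yields $|F(a,b,c,d)|\leqslant\|w*(a\overline{b})\|_{L^\infty}\|c\|_{L^2}\|d\|_{L^2}$, while Lemma \ref{convu} at $s=1$ (applicable since $p>2$), together with the invariance of $\widetilde{H}^1_\alpha$ under complex conjugation (immediate from the decomposition \eqref{eq:Ds_s1/2-3/2}, as $G_\lambda$ is real), gives
\[
\|w*(a\,\overline{b})\|_{L^\infty}\;\lesssim\;\|w\|_{W^{1,p}}\,\|a\|_{\widetilde{H}^1_\alpha}\,\|b\|_{\widetilde{H}^1_\alpha}\,.
\]
Combining these two bounds and using $\|\cdot\|_{L^2}\leqslant\|\cdot\|_{\widetilde{H}^1_\alpha}$, one obtains
\[
|F(a,b,c,d)|\;\lesssim\;\|w\|_{W^{1,p}}\,\|a\|_{\widetilde{H}^1_\alpha}\|b\|_{\widetilde{H}^1_\alpha}\|c\|_{\widetilde{H}^1_\alpha}\|d\|_{\widetilde{H}^1_\alpha}\,.
\]

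Writing $\mathcal{N}(u)=F(u,u,u,u)$ and telescoping,
\[
\mathcal{N}(v_n)-\mathcal{N}(v)\;=\;F(v_n{-}v,v_n,v_n,v_n)+F(v,v_n{-}v,v_n,v_n)+F(v,v,v_n{-}v,v_n)+F(v,v,v,v_n{-}v)\,,
\]
the boundedness of $F$ together with that of $\|v_n\|_{\widetilde{H}^1_\alpha}$ (convergent sequences are bounded) force $\mathcal{N}(v_n)\to\mathcal{N}(v)$. Combined with the continuity of the quadratic part, this gives $\mathcal{E}(v_n)\to\mathcal{E}(v)$, and the continuity of $t\mapsto\mathcal{E}(u(t))$ on $[-T,T]$ then follows immediately by composing the continuous map $t\mapsto u(t)\in\widetilde{H}^1_\alpha$ with the continuous functional $\mathcal{E}$. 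No genuine obstacle is foreseen: the only delicate ingredient, namely the uniform $L^\infty$-control of the Hartree convolution on $\widetilde{H}^1_\alpha$, is exactly what Lemma \ref{convu} has been engineered to provide under the hypothesis $p>2$.
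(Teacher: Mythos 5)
Your argument is correct and follows essentially the same route as the paper: the quadratic part is handled by elementary continuity of the form (your norm identity $(-\Delta_\alpha)[u]=\|u\|_{\widetilde{H}^1_\alpha}^2-\|u\|_{L^2}^2$ versus the paper's bilinear difference bound is only a cosmetic difference), and the Hartree term is controlled through the $L^\infty$ bound of Lemma \ref{convu} combined with H\"older/Young, your four-term telescoping of the quartic form being the same multilinearity device as the paper's two-term splitting. Nothing further is needed.
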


\begin{proof}
	The limit $\mathcal{E}(v_n)\xrightarrow[]{n\to +\infty}\mathcal{E}(v)$ follows from the inequality
	\[
	\begin{split}
	|\mathcal{E}(v)-\mathcal{E}(v_n)|\;&\lesssim\;|\,(-\Delta_{\alpha})[v]-(-\Delta_{\alpha})[v_n]\,| \\
	&\qquad +\|(w*|v|^2)|v|^2-(w*|v_n|^2)|v_n|^2\|_{L^1}
	\end{split}
	\]
	combined with the estimates
	\[
	|\,(-\Delta_{\alpha})[v]-(-\Delta_{\alpha})[v_n]\,|\;\lesssim\;\|v-v_n\|_{\widetilde{H}_{\alpha}^1}\big(\|v\|_{\widetilde{H}_{\alpha}^1}+\|v_n\|_{\widetilde{H}_{\alpha}^1}\big)
	\]
	and
	\[
	\begin{split}
	\|(w*|v|^2)&|v|^2-(w*|v_n|^2)|v_n|^2\|_{L^1} \\
	&\lesssim\;\|(w*|v|^2)(|v|^2-|v_n|^2)\|_{L^1}+\|(w*(|v|^2-|v_n|^2)|v_n|^2\|_{L^1} \\
	&\lesssim\;\|w*|v|^2\|_{L^\infty}\|v-v_n\|_2\big(\|v\|_2+\|v_n\|_2\big) \\
	&\qquad\qquad +\big\||w|*\big(|v-v_n|(|v|+|v_n|)\big)\big\|_{L^\infty}\|v_n\|_{L^2}^2 \\
	&\lesssim\;\|w\|_{W^{1,p}}\|v-v_n\|_{\widetilde{H}_{\alpha}^1}\big(\|v\|^2_{\widetilde{H}_{\alpha}^1}+\|v_n\|^2_{\widetilde{H}_{\alpha}^1}\big)\,,
	\end{split}
	\]
	the last two steps above following from H\"{o}lder's and Young's inequality, and from the inequality \eqref{eq:convu}.
\end{proof}

We then see that mass and energy are conserved in the spherically symmetric component of the energy space.

\begin{proposition}[Mass and energy conservation in $\widetilde{H}^{1}_{\alpha,\mathrm{rad}}(\mathbb{R}^3)$]\label{enel_cons}~
	
	\noindent Let $\alpha\geqslant 0$. For a given $w\in W^{1,p}_{\mathrm{rad}}(\mathbb{R}^3)$, $p\in(2,+\infty)$, and a given $f\in\widetilde{H}_{\alpha,\mathrm{rad}}^1(\R^3)$, let $u$ be the unique local solution in $\mathcal{C}((-T_*,T^*),\widetilde{H}^{1}_{\alpha,\mathrm{rad}}(\mathbb{R}^3))$ to the Cauchy problem \eqref{eq:integral_formula_Duhamel} in the maximal interval $(-T_*,T^*)$, as given by Theorem \ref{thm:high_reg}.
	Then $\mathcal{M}(u(t))$ and $\mathcal{E}(u(t))$ are constant for $t\in(-T_*,T^*)$.
\end{proposition}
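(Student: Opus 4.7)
The plan is to first secure conservation of mass by reduction to Proposition \ref{mass_cons} and then to handle the energy by establishing it first on the operator domain and then propagating it by approximation. Since $\widetilde{H}^1_{\alpha,\mathrm{rad}}(\mathbb{R}^3)\hookrightarrow L^2(\mathbb{R}^3)$ and $W^{1,p}_{\mathrm{rad}}(\mathbb{R}^3)\hookrightarrow L^\infty\cap W^{1,3}(\mathbb{R}^3)$ for $p>2$ (via Sobolev), the energy-space solution $u$ is also the unique strong $L^2$-solution to \eqref{eq:integral_formula_Duhamel} produced by Theorem \ref{thm:L2_reg}; thus Proposition \ref{mass_cons} gives $\mathcal{M}(u(t))=\mathcal{M}(f)$ throughout $(-T_*,T^*)$.

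For the energy, I would begin in the fully regular regime: assume $f\in\widetilde{H}^2_{\alpha,\mathrm{rad}}(\mathbb{R}^3)$ and $w\in W^{2,p}_{\mathrm{rad}}(\mathbb{R}^3)$. By Theorem \ref{highh} (with $s=2$) and Corollary \ref{diffsensehigh} the local solution $u$ lies in $\mathcal{C}([-T,T],\widetilde{H}^2_{\alpha,\mathrm{rad}}(\mathbb{R}^3))$ and satisfies $\ii\partial_t u=-\Delta_\alpha u+(w*|u|^2)u$ as an $L^2$-identity. Differentiating $\mathcal{E}(u(t))$ in time, the quadratic-form part gives $\mathfrak{Re}\langle\partial_t u,-\Delta_\alpha u\rangle_{L^2}$ (by self-adjointness of $-\Delta_\alpha$ on its operator domain), while the interaction part, exploiting the evenness of $w$ implied by its radiality, produces
\[
{\textstyle\frac{\ud}{\ud t}}{\textstyle\frac14}\!\int_{\mathbb{R}^3}\!(w*|u|^2)|u|^2\,\ud x\;=\;\mathfrak{Re}\langle\partial_t u,(w*|u|^2)u\rangle_{L^2}\,.
\]
Summing the two contributions and substituting the differential equation yields $\frac{\ud}{\ud t}\mathcal{E}(u(t))=\mathfrak{Re}\langle\partial_t u,\ii\partial_t u\rangle_{L^2}=0$, so $\mathcal{E}(u(t))=\mathcal{E}(f)$ on $[-T,T]$; a standard continuation argument extends this to the full maximal interval.

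To reach a general datum $f\in\widetilde{H}^1_{\alpha,\mathrm{rad}}(\mathbb{R}^3)$ and $w\in W^{1,p}_{\mathrm{rad}}(\mathbb{R}^3)$, I would approximate by spherically-symmetric $f_n\in\widetilde{H}^2_{\alpha,\mathrm{rad}}(\mathbb{R}^3)$ and $w_n\in W^{2,p}_{\mathrm{rad}}(\mathbb{R}^3)$ with $f_n\to f$ in $\widetilde{H}^1_{\alpha}$ and $w_n\to w$ in $W^{1,p}$; such sequences can be built, for example, via convolution with a radial mollifier, which preserves radiality and respects the decomposition \eqref{eq:Ds_s1/2-3/2}. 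Proposition \ref{stabint} applied at $s=1$ delivers some $T>0$ for which the corresponding solutions $u_n$ converge to $u$ in $\mathcal{C}([-T,T],\widetilde{H}^1_{\alpha}(\mathbb{R}^3))$, and Lemma \ref{preli} then gives $\mathcal{E}(u_n(t))\to\mathcal{E}(u(t))$ for each $t\in[-T,T]$. Since each $\mathcal{E}(u_n(\cdot))$ is constant by the previous step, passing to the limit yields $\mathcal{E}(u(t))=\mathcal{E}(f)$ on $[-T,T]$. Repeating this argument around an arbitrary time $t_0\in(-T_*,T^*)$ (with $u(t_0)$ as new initial datum) makes $t\mapsto\mathcal{E}(u(t))$ locally constant and, thanks to its continuity (Lemma \ref{preli}), constant throughout the maximal interval.

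The main obstacle I anticipate is the regularization step: one has to construct radial approximants $f_n\in\widetilde{H}^2_{\alpha,\mathrm{rad}}$ converging to $f$ in $\widetilde{H}^1_{\alpha}$ respecting the boundary condition embedded in the $\widetilde{H}^2_\alpha$-decomposition \eqref{eq:Ds_s3/2-2}. A clean way is to pick a radial mollifier in the regular component of \eqref{eq:Ds_s1/2-3/2}, adjust the Green's function coefficient so that the Ter-Martyrosyan--Skornyakov-type identity relating the two components of \eqref{eq:Ds_s3/2-2} is satisfied, and check convergence component-wise using Theorem \ref{thm:domain_s}; the needed radiality of $w_n$ is straightforward by radial mollification, and the required control of $\|w_n\|_{W^{1,p}}$ is built into the stability hypotheses of Proposition \ref{stabint}.
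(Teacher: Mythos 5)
Your treatment of the energy is exactly the paper's argument: conservation first for $f\in\widetilde{H}^2_{\alpha,\mathrm{rad}}$, $w\in W^{2,p}_{\mathrm{rad}}$ via Corollary \ref{diffsensehigh} (pairing the $L^2$-identity with $\partial_t u$ and taking the real part), then the passage to general data through the stability result of Proposition \ref{stabint} combined with the continuity of the energy functional (Lemma \ref{preli}), and finally the iteration of the argument around an arbitrary $t_0$ together with continuity of $t\mapsto\mathcal{E}(u(t))$ to cover the whole maximal interval. Your closing remark on constructing radial $\widetilde{H}^2_\alpha$-approximants is a legitimate point that the paper takes for granted (density of $\mathcal{D}(-\Delta_\alpha)$ in the form domain, compatible with the reduction to the $\ell=0$ sector); your sketch of how to do it is fine.

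The mass step is where you diverge from the paper, and your justification as written is incorrect. You claim $W^{1,p}_{\mathrm{rad}}(\mathbb{R}^3)\hookrightarrow L^\infty(\mathbb{R}^3)\cap W^{1,3}(\mathbb{R}^3)$ for every $p>2$: this fails. For $2<p<3$ one has $W^{1,p}(\mathbb{R}^3)\hookrightarrow L^{p^*}$ with $p^*=\frac{3p}{3-p}<\infty$ but not $L^\infty$ (radial symmetry does not repair the possible blow-up at the origin), and for any $p\neq 3$ one has $W^{1,p}(\mathbb{R}^3)\not\subset W^{1,3}(\mathbb{R}^3)$, simply because $L^p(\mathbb{R}^3)\not\subset L^3(\mathbb{R}^3)$ (consider $(1+|x|)^{-\beta}$ with $\frac{3}{p}<\beta\leqslant 1$ when $p>3$). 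The reduction to Proposition \ref{mass_cons} can nevertheless be salvaged in one line through the \emph{other} admissible class there: $w\in W^{1,p}(\mathbb{R}^3)\subset L^{p}(\mathbb{R}^3)\hookrightarrow L^{p,\infty}(\mathbb{R}^3)=L^{\frac{3}{\gamma},\infty}(\mathbb{R}^3)$ with $\gamma=\frac{3}{p}\in(0,\frac{3}{2})$. Note also that your reduction implicitly identifies the $\widetilde{H}^1_{\alpha,\mathrm{rad}}$-solution with the maximal $L^2$-solution of Theorem \ref{thm:L2_reg}, i.e.\ it leans on the uniqueness class of the $L^2$-theory; the paper avoids both issues by proving mass conservation directly in the energy space, pairing the equation (valid in $\widetilde{H}^{-1}_\alpha$ by Corollary \ref{soddire}) with $u$ in the $\widetilde{H}^{-1}_\alpha$--$\widetilde{H}^{1}_\alpha$ duality and taking the imaginary part. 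Either fix makes your argument complete.
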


\begin{proof}
	We start proving the statement for the mass. 
	Owing to Corollary \ref{soddire}, for each $t\in (-T_*,T^*)$ $u$ satisfies $\ii\partial_t u=-\Delta_\alpha u+ (w*|u|^2)u$ as an identity in $\widetilde{H}^{-1}_\alpha(\mathbb{R}^3)$, whence
	\begin{equation*}
	\big\langle \,\ii\partial_t u+\Delta_\alpha u-(w*|u|^2)u,u\big\rangle_{\widetilde{H}^{-1}_\alpha,\widetilde{H}^{1}_\alpha}\;=\;0\,.
	\end{equation*}
	The imaginary part of the above identity gives
	$${\textstyle\frac{\ud}{\ud t}}\,\|u(t)\|_{L^2}^2\;=\;0\,,$$
	which implies that $\mathcal{M}(u(t)$ is constant on $(-T_*,T^*)$.

	Let us prove now that the energy is conserved, first in the special case  $f\in \widetilde{H}_{\alpha,\mathrm{rad}}^2(\R^3)$ and $w\in W_{\mathrm{rad}}^{2,p}(\mathbb{R}^3)$, for $p\in(2,+\infty)$. Owing to Corollary \ref{diffsensehigh}, $u$ satisfies $\ii\partial_t u=-\Delta_\alpha u+ (w*|u|^2)u$ as an identity in $L^2(\mathbb{R}^3)$, whence
	\begin{equation*}
	\big\langle \,\ii\partial_t u+\Delta_\alpha u-(w*|u|^2)u,\partial_t u\,\big\rangle_{L^2}\;=\;0\,.
	\end{equation*}
	The real part in the above identity gives
	$${\textstyle\frac{\ud}{\ud t}}\big({\textstyle\frac12}\langle-\Delta_{\alpha}u,u\rangle_{L^2}-{\textstyle\frac14}\!\textstyle\int_{\R^3}(w*|u|^2)|u|^2\ud x\big)\;=\;0\,,$$
	which implies that $\mathcal{E}(u(t))$ is constant on $(-T_*,T^*)$.

	For arbitrary $f\in\widetilde{H}_{\alpha,\mathrm{rad}}^1(\R^3)$ and $w\in W_{\mathrm{rad}}^{1,p}(\mathbb{R}^3)$ we use the stability result of Proposition \ref{stabint}. Let $(f_n)_n$ be a sequence in $\widetilde{H}_{\alpha,\mathrm{rad}}^2(\R^3)$ and $(w_n)_n$ be a sequence in $W_{\mathrm{rad}}^{2,p}(\R^3)$ such that $f_n\xrightarrow[]{n\to +\infty}f$ in $\widetilde{H}_{\alpha}^1(\R^3)$ and $w_n\xrightarrow[]{n\to +\infty} w$ in $W^{1,p}(\mathbb{R}^3)$, and denote by $u_n$ the solution to the Cauchy problem \eqref{eq:Cauchy_problem_sing_Hartree} with initial datum $f_n$ and potential $w_n$. Then Proposition \ref{stabint} guarantees that $u_n\xrightarrow[]{n\to +\infty} u$ in $\mathcal{C}([-T,T],\widetilde{H}_{\alpha}^1(\R^3))$ for some $T>0$, and Lemma \ref{preli} implies that $\mathcal{E}(u_n(t))\xrightarrow[]{n\to +\infty}\mathcal{E}(u(t))$ for $t\in[-T,T]$. Using the energy conservation for $u_n$  we deduce that $\mathcal{E}(u(t))=\mathcal{E}(f)$ for $t\in[-T,T]$.
	Repeating the above argument with $f$ replaced by $u(t_0)$ for some $t_0\in(-T_*,T^*)$ yields the property that $t\mapsto\mathcal{E}(u(t))$ is constant in a suitable interval around $t_0$ and hence, by the arbitrariness of $t_0$, it is locally constant on the whole $(-T_*,T^*)$. But $(-T_*,T^*)\ni t\mapsto\mathcal{E}(u(t))$ is also continuous, whence the conclusion.
\end{proof}

We are now ready to prove our result on the solution theory for the Cauchy problem \eqref{eq:Cauchy_problem_sing_Hartree}. 

\begin{proof}[Proof of Theorem \ref{thm:GWP}]~
	
	Let $u\in\mathcal{C}((-T_*,T^*),\widetilde{H}^{1}_{\alpha,\mathrm{rad}}(\mathbb{R}^3))$ be the unique local strong solution to \eqref{eq:Cauchy_problem_sing_Hartree}, on the maximal time interval $(-T_*,T^*)$, with given initial datum $f=\phi_\lambda+c\, G_\lambda\in \widetilde{H}^{1}_{\alpha,\mathrm{rad}}(\mathbb{R}^3) $, for some $\lambda>0$, and given potential $w\in W^{1,p}_{\mathrm{rad}}(\mathbb{R}^3)$, for some $p\in(2,+\infty)$, as provided by Theorem \ref{thm:high_reg}. Then $(-T_*,T^*)\ni t\mapsto \mathcal{M}(u(t))+\mathcal{E}(u(t))$ is the constant map, as follows from Propositions \ref{enel_cons}. Decomposing $u(t)=\phi_\lambda(t)+\kappa_\lambda(t)G_\lambda$ for each $t\in(-T_*,T^*)$ and using \eqref{eq:equiv_of_norms_s1232} we find 
	\begin{align*}
	\|u(t)\|^2_{\widetilde{H}_{\alpha}^1}\;&\approx\;\|\phi_\lambda(t)\|_{H^1}^2+|\kappa_\lambda(t)|^2 \\
	&\lesssim\; (\lambda+1)\,\|u(t)\|_{L^2} \nonumber \\
	&\qquad+\big(\lambda\|\phi_\lambda(t)\|_{L^2}^2-\lambda\|u(t)\|_{L^2}^2+\|\nabla\phi_\lambda(t)\|_{L^2}^2+\big(\alpha+\textstyle\frac{\sqrt{\lambda}}{4\pi}\big)|\kappa_\lambda(t)|^2\big) \nonumber \\
	&\lesssim\;\mathcal{M}(u(t))+{\textstyle\frac12}(-\Delta_{\alpha})[u(t)]\,. \tag{*}
	\end{align*}

	For part (i) of the statement, we observe that
	\begin{align*}
	\sup_{t\in(-T_*,T^*)}\|u(t)\|^2_{\widetilde{H}_{\alpha}^1}\;&\lesssim\; \sup_{t\in(-T_*,T^*)}\big(\mathcal{M}(u(t))+{\textstyle\frac12}(-\Delta_{\alpha})[u(t)]\big)\\
	&\lesssim\;\sup_{t\in(-T_*,T^*)} \big(\mathcal{M}(u(t))+\mathcal{E}(u(t))+\|(w*|u(t)|^2)|u(t)|^2\|_{L_x^1}\big)\\
	&\lesssim\; 1+\sup_{t\in(-T_*,T^*)}\|w*|u|^2\|_{L_x^{\infty}}\,\|u\|^2_{L_x^2}\\
	&\lesssim\; 1+\sup_{t\in(-T_*,T^*)}\|w\|_{W^{s,p}}\|u\|^2_{\widetilde{H}_{\alpha}^1}\,\|f\|^2_{L^2}\,,
	\end{align*}
	having used (*), the estimate \eqref{eq:convu}, and the mass and energy conservation. Therefore, if $\|f\|_{L^2}$ is sufficiently small (depending only on $\|w\|_{W^{s,p}}$), then
	$$\sup_{t\in(-T_*,T^*)}\|u(t)\|^2_{\widetilde{H}_{\alpha}^1}\;\lesssim\; 1\,,$$
	and we conclude that solution is global, owing to the blow up alternative.

	For part (ii) of the statement, the additional assumption $w\geqslant 0$ implies
	$${\textstyle\frac12}(-\Delta_{\alpha})[u(t)]\;\leqslant \;{\textstyle\frac12}(-\Delta_{\alpha})[u(t)]+{\textstyle\frac14}\int_{\R^3}(w*|u(t)|^2)|u(t)|^2\ud x\;=\;\mathcal{E}(u(t))\,,$$
	which, combined with (*) and the mass and energy conservation yields
	\begin{equation*}
	\sup_{t\in(-T_*,T^*)}\|u(t)\|^2_{\widetilde{H}_{\alpha}^1}\;\lesssim\; \sup_{t\in(-T_*,T^*)}\big(\mathcal{M}(u(t))+\mathcal{E}(u(t))\big)\;\lesssim\; 1\,.
	\end{equation*}
	Therefore, the solution is global, by the blow up alternative. Since this is true for every initial datum $f\in \widetilde{H}^{1}_{\alpha,\mathrm{rad}}(\mathbb{R}^3)$, we deduce global well-posedness for \eqref{eq:Cauchy_problem_sing_Hartree}.
\end{proof}

\section{Comments on the spherically symmetric solution theory}\label{sec:spheric}

As initially mentioned in the Introduction and then shown in the preceding discussion, part of the solution theory was established for spherically symmetric potentials and solutions (Theorems \ref{highh} and \ref{thm:GWP}) and in this Section we collect our remarks on the emergence of such a feature.

This is indeed a natural phenomenon both for the local high regularity theory and for the global theory in the energy space, as we are now going to explain. Of course, the spherically symmetric solution theory is the most relevant in the study of the singular Hartree equation, since the linear part, namely the operator $-\Delta_\alpha$, differs from the ordinary $-\Delta$ precisely in the $L^2$-sector of rotationally symmetric functions.

For the local theory, one ineludible ingredient of the fixed point argument is the treatment of the non-linear part of the solution map \eqref{eq:Smap} with a $\widetilde{H}^s_\alpha$-estimate that we close by means of the trilinear estimate \eqref{eq:LaL}/\eqref{eq:LaLh}.

This estimate is designed for functions of the form $hu$, where $h=w*|u|^2$, and it is crucially sensitive to the specific structure of the space $\widetilde{H}^s_\alpha(\mathbb{R}^3)$ for $s>\frac{1}{2}$ (Theorem \ref{thm:domain_s}(ii)-(iii)). In particular, in order to recognise that the regular part $hu$ is indeed a $H^s$-function, one must show that $(h-h(0))G_\lambda\in H^s(\mathbb{R}^3)$. Technically this is dealt with by means of the fractional Leibniz rule, suitably generalised so as to avoid the direct $L^p$-estimate of $s$ derivatives of each factor $h-h(0)$ and $G_\lambda$; already heuristically it is clear that this only works with a sufficient vanishing rate of $h-h(0)$ as $|x|\to 0$ in order to compensate the local singularity of $G_\lambda$.

For intermediate regularity (Proposition \ref{tst} and Lemmas \ref{convu}-\ref{produ}) the vanishing rate $h(x)-h(0)\sim |x|^\theta$ that can be deduced from the embedding $w*|u^2|\in \mathcal{C}^{0,\theta}(\mathbb{R}^3)$ is enough to close the argument and no spherical symmetry is required. For high regularity (Proposition \ref{tstdue} - Lemma \ref{le:fra_decay}, and Lemmas \ref{convu_bou}-\ref{produ_bou}) the embedding $w*|u^2|\in \mathcal{C}^{1,\theta}(\mathbb{R}^3)$ would only guarantee an insufficient vanishing rate $h(x)-h(0)\sim |x|$; since one needs $h(x)-h(0)\sim |x|^{1+\theta}$, this requires the additional condition $\nabla h(0)=0$. For the latter condition to hold for $h=w*|u|^2$, as shown in the proof of Lemma \ref{convu_bou}(ii), the spherical symmetry of both $w$ and $u$ appears as the most natural and explicitly treatable assumption.

In fact, the condition $\nabla h(0)=0$ is even more crucial and apparently unavoidable in one further point of the argument $hu\in\widetilde{H}^s_\alpha(\mathbb{R}^3)$, because unlike the intermediate regularity case, where it suffices to prove that the regular component of $hu$ is a $H^s$-function, in the high regularity case one must also prove that such regular component satisfies the correct \emph{boundary condition} in connection with the singular component. As shown in the proof of Lemma \ref{produ_bou}, the correct boundary condition is \emph{equivalent} to $|x|^{-1}(h(x)-h(0))\to 0$ as $|x|\to 0$, for which $\nabla h(0)=0$ is again necessary.

Concerning the global theory in the energy space, the emergence of a solution theory for spherically symmetric functions is due to one further mechanism. As usual, globalisation is based upon the mass and energy conservation. In the theory of semi-linear Schr\"{o}dinger equations it is typical that the conservation laws are deduced from a suitably regularised problem (see, e.g., the proof of \cite[Theorem 3.3.5]{cazenave}). In the present context (Proposition \ref{enel_cons}) we follow this scheme showing first the conservation laws at the level of $\widetilde{H}^2_\alpha$-regularity, and then controlling the stability of a density argument which is set for $\widetilde{H}^1_\alpha$-regularity. Clearly the first step appeals to the local $\widetilde{H}^2_\alpha$-theory, which is derived only for the spherically symmetric case, thus the stability argument can only work in the spherically symmetric sector of the energy space.

\section*{Acknowledgements}

We are deeply indebted to G.~Dell'Antonio for many enlightening discussions on the subject and to V.~Georgiev for his precious advices on his work \cite{Fujiwara-Georgiev-Ozawa-2016} and on Theorem \ref{KatoPonce_Vladimir}.



\begin{thebibliography}{10}

\bibitem{Adami_JPA2009_1D_NLS_with_delta}
R.~Adami and D.~Noja, {Existence of dynamics for a 1{D} {NLS} equation
  perturbed with a generalized point defect}, {\em J. Phys. A} {\bf 42}(49)
  (2009)  495302, 19.
  
\bibitem{Adami-Noja-CMP2013-1D-NLS-deltaprime}
R.~Adami and D.~Noja, {Stability and symmetry-breaking bifurcation for the
  ground states of a {NLS} with a {$\delta'$} interaction}, {\em Comm. Math.
  Phys.} {\bf 318}(1) (2013)  247--289.

\bibitem{Adami-Sacchetti-2005-1D-NLS-delta}
R.~Adami and A.~Sacchetti, {The transition from diffusion to blow-up for a
  nonlinear {S}chr{\"o}dinger equation in dimension 1}, {\em J. Phys. A} {\bf
  38}(39) (2005)  8379--8392.

\bibitem{Albeverio_Brzesniak-Dabrowski-1995}
S.~Albeverio, Z.~Brze{\'z}niak and L.~Dabrowski, {Fundamental solution of the
  heat and {S}chr{\"o}dinger equations with point interaction}, {\em J. Funct.
  Anal.} {\bf 130}(1) (1995)  220--254.

\bibitem{albeverio-solvable}
S.~Albeverio, F.~Gesztesy, R.~H{\o}egh-Krohn and H.~Holden, {\em {Solvable
  {M}odels in {Q}uantum {M}echanics}}, {Texts
  and Monographs in Physics} (Springer-Verlag, New York, 1988).

\bibitem{Pava-Ferreira-2014_1D-NLS-delta-deltaprime}
J.~{Angulo Pava} and L.~C.~F. Ferreira, {On the {S}chr{\"o}dinger equation with
  singular potentials}, {\em Differential Integral Equations} {\bf 27}(7-8)
  (2014)  767--800.

\bibitem{Ardila-2017_NLS-deltaprime_lognonlin}
A.~H. Ardila, {Stability of ground states for logarithmic {S}chr{\"o}dinger
  equation with a {$\delta'$}-interaction}, {\em Evol. Equ. Control Theory}
  {\bf 6}(2) (2017)  155--175.
  
\bibitem{Banica-Visciglia-2016_Scattering-NLS-delta}
V.~Banica and N.~Visciglia, {Scattering for {NLS} with a delta potential}, {\em
  J. Differential Equations} {\bf 260}(5) (2016)  4410--4439.
  
\bibitem{Benedikter-Porta-Schlein-2015}
N.~Benedikter, M.~Porta and B.~Schlein, {\em {Effective evolution equations
  from quantum dynamics}}, {Springer Briefs in Mathematical Physics}, Vol.~7
  (Springer, Cham, 2016).
  
\bibitem{Bergh-Lofsrom_InterpolationSpaces1976}
J.~Bergh and J.~L{\"o}fstr{\"o}m, {\em {Interpolation spaces. {A}n
  introduction}} (Springer-Verlag, Berlin-New York, 1976).
\newblock Grundlehren der Mathematischen Wissenschaften, No. 223.
  
\bibitem{Bethe_Peierls-1935}
H.~Bethe and R.~Peierls, {Quantum Theory of the Diplon}, {\em Proceedings of
  the Royal Society of London. Series A, Mathematical and Physical Sciences}
  {\bf 148}(863) (1935)  146--156.

\bibitem{cazenave}
T.~Cazenave, {\em {Semilinear {S}chr{\"o}dinger equations}}, {Courant Lecture
  Notes in Mathematics}, Vol.~10 (New York University Courant Institute of
  Mathematical Sciences, New York, 2003).

\bibitem{Dancona-Pierfelice-Teta-2006}
P.~D'Ancona, V.~Pierfelice and A.~Teta, {Dispersive estimate for the
  {S}chr{\"o}dinger equation with point interactions}, {\em Math. Methods Appl.
  Sci.} {\bf 29}(3) (2006)  309--323.
  
\bibitem{DMSY-2017}
G.~Dell'Antonio, A.~Michelangeli, R.~Scandone and K.~Yajima,
  {{$L^p$}-{B}oundedness of {W}ave {O}perators for the {T}hree-{D}imensional
  {M}ulti-{C}entre {P}oint {I}nteraction}, {\em Ann. Henri Poincar{\'e}} {\bf
  19}(1) (2018)  283--322.
  
\bibitem{Erdogan-Tzirakis-2015_NLS-halfline}
M.~B. {Erdo\u gan} and N.~Tzirakis, {Regularity properties of the cubic
  nonlinear {S}chr{\"o}dinger equation on the half line}, {\em J. Funct. Anal.}
  {\bf 271}(9) (2016)  2539--2568.
  
\bibitem{Fujiwara-Georgiev-Ozawa-2016}
K.~Fujiwara, V.~Georgiev and T.~Ozawa, {Higher Order Fractional Leibniz Rule},
  {\em Journal of Fourier Analysis and Applications} (Apr 2017).
  
\bibitem{Georgiev-M-Scandone-2016-2017}
V.~Georgiev, A.~Michelangeli and R.~Scandone, {On fractional powers of singular
  perturbations of the Laplacian}, {\em Journal of Functional Analysis}
  (2018). https://doi.org/10.1016/j.jfa.2018.03.007
  
\bibitem{Gulisashvili-Kon-1996}
A.~Gulisashvili and M.~A. Kon, {Exact smoothing properties of {S}chr{\"o}dinger
  semigroups}, {\em Amer. J. Math.} {\bf 118}(6) (1996)  1215--1248.
  
\bibitem{Ianni-Coz-Royier-2017_NLS-delta}
I.~Ianni, S.~{Le Coz} and J.~Royer, {On the {C}auchy problem and the black
  solitons of a singularly perturbed {G}ross-{P}itaevskii equation}, {\em SIAM
  J. Math. Anal.} {\bf 49}(2) (2017)  1060--1099.

\bibitem{Iandoli-Scandone-2017}
F.~Iandoli and R.~Scandone, {Dispersive estimates for Schr{\"o}dinger operators
  with point interactions in $\mathbb{R}^3$}, in {\em {Advances in Quantum
  Mechanics: Contemporary Trends and Open Problems}\/},  eds. A.~Michelangeli
  and G.~Dell'Antonio, {\em {Springer INdAM Series, vol.~18}} (Springer
  International Publishing pp. 187--199.
  
\bibitem{Ikeda-Takahisa-2017_global-1D-NLS-delta}
M.~Ikeda and T.~Inui, {Global dynamics below the standing waves for the
  focusing semilinear {S}chr{\"o}dinger equation with a repulsive {D}irac delta
  potential}, {\em Anal. PDE} {\bf 10}(2) (2017)  481--512.

\bibitem{Kato-Ponce_CommEst-1988}
T.~Kato and G.~Ponce, {Commutator estimates and the {E}uler and
  {N}avier-{S}tokes equations}, {\em Comm. Pure Appl. Math.} {\bf 41}(7) (1988)
   891--907.

\bibitem{Miao-Hartree-2007}
C.~Miao, G.~Xu and L.~Zhao, {The {C}auchy problem of the {H}artree equation},
  {\em J. Partial Differential Equations} {\bf 21}(1) (2008)  22--44.

\bibitem{MO-2016}
A.~Michelangeli and A.~Ottolini, {On point interactions realised as
  {T}er-{M}artirosyan-{S}kornyakov {H}amiltonians}, {\em Rep. Math. Phys.} {\bf
  79}(2) (2017)  215--260.

\bibitem{MOS-2018_FractionalPerturbation}
A.~Michelangeli, A.~Ottolini and R.~Scandone, {Fractional powers and singular
  perturbations of differential operators}, arXiv:2018.01 (2018).

\bibitem{MS-2018-shrinking-and-fractional}
A.~Michelangeli and R.~Scandone, {Point-like perturbed fractional Laplacians
  through shrinking potentials of finite range}, arXiv:1803.10191 (2018).

\bibitem{Scarlatti-Teta-1990}
S.~Scarlatti and A.~Teta, {Derivation of the time-dependent propagator for the
  three-dimensional {S}chr{\"o}dinger equation with one-point interaction},
  {\em J. Phys. A} {\bf 23}(19) (1990)  L1033--L1035.
  
\bibitem{TMS-1956}
G.~V. Skornyakov and K.~A. Ter-Martirosyan, {Three Body Problem for Short Range
  Forces. I. Scattering of Low Energy Neutrons by Deuterons}, {\em Sov. Phys.
  JETP} {\bf 4} (1956)  648--661.

\bibitem{Witthaut-Mossmann-Korsh-2005}
D.~Witthaut, S.~Mossmann and H.~J. Korsch, {Bound and resonance states of the
  nonlinear {S}chr{\"o}dinger equation in simple model systems}, {\em J. Phys.
  A} {\bf 38}(8) (2005)  1777--1792.

\end{thebibliography}

\def\cprime{$'$}

\end{document}